\providecommand{\U}[1]{\protect\rule{.1in}{.1in}}
\newtheorem{theorem}{Theorem}
\newtheorem{corollary}{Corollary}
\newtheorem{lemma}{Lemma}
\newtheorem{proposition}{Proposition}
\newtheorem{assumption}{Assumption}
\theoremstyle{definition}
\newcommand{\crossmark}{\scalebox{0.75}{\usym{2613}}}
\DeclareMathOperator*{\sign}{sgn}
\DeclareMathOperator*{\argmin}{arg\,min}
\begin{document}
% Keywords command
\providecommand{\keywords}[1]
{
 \small	
 \textbf{\textit{Keywords---}} #1
}

\title{Shape-Constrained Distributional Optimization via Importance-Weighted Sample Average Approximation}
\date{}
\author{Henry Lam\thanks{Department of Industrial
Engineering and Operations Research, Columbia University.}
\and Zhenyuan Liu\footnotemark[1]
\and Dashi I. Singham\thanks{Operations Research Department, Naval Postgraduate School.}
}
\maketitle

\begin{abstract}
Shape-constrained optimization arises in a wide range of problems including distributionally robust optimization (DRO) that has surging popularity in recent years. In the DRO literature, these problems are usually solved via reduction into moment-constrained problems using the Choquet representation. While powerful, such an approach could face tractability challenges arising from the geometries and the compatibility between the shape and the objective function and moment constraints. In this paper, we propose an alternative methodology to solve shape-constrained optimization problems by integrating sample average approximation with importance sampling, the latter used to convert the distributional optimization into an optimization problem over the likelihood ratio with respect to a sampling distribution. We demonstrate how our approach, which relies on finite-dimensional linear programs, can handle a range of shape-constrained problems beyond the reach of previous Choquet-based reformulations, and entails vanishing and quantifiable optimality gaps. Moreover, our theoretical analyses based on strong duality and empirical processes reveal the critical role of shape constraints in guaranteeing desirable consistency and convergence rates.
\end{abstract}

\keywords{shape constraint, distributionally robust optimization, sample average approximation, strong duality, empirical process}

\section{Introduction}\label{sec:introform}
% We first introduce our main formulation in Section \ref{sec:introform}.  Section \ref{sec:literature} reviews the literature on distributionally robust optimization relevant to our problem.  Section \ref{sec:SAA_DRO_contributions} presents our main contributions.
% \subsection{Formulation}
This paper studies optimization problems in the form%
\begin{align}
\sup_{f}\text{ }  &  \mathbb{E}_{f}[\phi_{0}(X)]\nonumber\\
\text{subject to }  &  \mathbb{E}_{f}[\phi_{j}(X)]\leq\mu_{j},j=1,\ldots,m\label{DRO_original}\\
&  \mathbb{E}_{f}[\phi_{j}(X)]=\mu_{j},j=m+1,\ldots,l\nonumber\\
&  f\in\mathcal{F}\nonumber
\end{align}
where the decision variable is a probability density $f$ that controls the expectation $\mathbb{E}_{f}[\cdot]$ on the random variable $X\in\mathcal{X}\subset\mathbb{R}^{n}$.  The functions $\phi_{j},j=0,\ldots,l$ are known functions from $\mathcal X$ to $\mathbb R$ that give rise to the objective and moment constraints. In particular, one of these $\phi_{j}$ in the moment constraints is chosen to be the indicator function $I(x\in\mathcal{X})$ to assert a total mass. Importantly, and as our main focus, the function class $\mathcal{F}$ contains shape information on $f$, such as monotonicity, convexity and various forms of unimodality. Note that one could generalize formulation \eqref{DRO_original} to allow probability distributions without densities, but at the expense of more technicality that we avoid digressing to in this paper.

% Our goal in this paper is to create a general-purpose methodology that is computationally efficient to solve the infinite-dimensional problem \eqref{DRO_original}. More precisely, we propose a sampling-based approach that integrates sample average approximation (SAA) with importance sampling, and study the 

% Formulation \eqref{DRO_original} and the methodology to be studied can be potentially extended to more general distributions that do not necessarily have a density, but for the purposes of clarity and applications that we will consider, we focus on \eqref{DRO_original}.

% (DO WE NEED DENSITY? SHOULD COMMENT. ZHENYUAN: YES. UNDER THE CURRENT FRAMEWORK/DERIVATION, WE CAN ONLY HANDEL DISTRIBUTIONS WITH DENSITIES.).

% shape information \DS{that represent shape constraints}. We assume our formulation is defined on the region : \DSQ{[DS:$n$ is defined later as the number of samples but I think here you mean the number of dimensions?]} $\phi_{j}$ and all functions in $\mathcal{F}$ are defined on $\mathcal{X}$ and the shape information is also imposed on $\mathcal{X}$. \sout{The first two constraints in (\ref{DRO_original}) are called moment constraints and the last constraint in (\ref{DRO_original}) is recognized as the shape constraint.}

Our primary motivation for studying \eqref{DRO_original} is that it gives rise to sharp worst-case bounds on expectation-type performance measures $\mathbb{E}_{f}[\phi_{0}(X)]$ when only partial information on $f$ is known. This problem arises generically in distributionally robust optimization (DRO) (\cite{delage2010distributionally,goh2010distributionally,kuhn2019wasserstein}), a methodology to tackle optimization under uncertainty that can be viewed as a generalization of classical robust optimization (\cite{ben2009robust,bertsimas2011theory}) when the uncertain parameter refers to the underlying probability distribution in a stochastic problem. Like classical robust optimization, DRO advocates decision-making that optimizes the worst-case scenario, and thus leads to a minimax problem where the outer minimization is over the decision and the inner maximization is over the uncertain probability distribution. In this regard, formulation \eqref{DRO_original} is precisely the inner maximization of a DRO, or can simply represent a worst-case estimation without reference to a decision. Here, the feasible region, known as the ambiguity set or uncertainty set, captures partial information on the uncertain distribution via moment and distributional shape constraints.

% Obviously, formulation \eqref{DRO_original} also applies for problems where the modeler is simply interested in understanding the worst-case performance measure subject to the imposed partial information, without necessarily a needing to make a decision per se. 

% (ZHENYUAN: ALSO MAYBE TOO CONSERVATIVE FOR FURTHER APPLICATIONS OF THE DRO OPTIMAL VALUE.)

% and thus is overly conservative. 
\subsection{Background and Existing Challenges}\label{sec:literature}
% In this regard, key ingredients of DRO are the constraints imposed on the distributions, which comprise the so-called ambiguity set or uncertainty set that aims to specify the problem in the form \eqref{DRO_original}, where the worst case is within the so-called ambiguity set or uncertainty set. The perspective of making decisions that optimize the worst-case has gathered surging attention in recent years. It advocates a minimax perspective that  is a quickly growing methodology to handle optimization under uncertainty. When facing distributional uncertainty, DRO seeks the worst-case scenario by optimizing over a class of distributions that contains the true distribution with high probability \cite{delage2010distributionally,goh2010distributionally,kuhn2019wasserstein}. Therefore, DRO provides a bound on the ground-truth value (objective under the true distribution) and can be thought of as robust optimization \cite{ben2009robust,bertsimas2011theory}. However, since its feasible region (or so-called uncertainty set) is composed of probability measures or probability densities, DRO is actually an infinite-dimensional optimization problem, making it difficult to solve. our considered problem, namely a distributional optimization problem \eqref{DRO_original} that includes both shape and moment constraints. 

Let us elaborate further and explain the challenges on the aforementioned problem that motivates this paper. First of all, the uncertainty sets in the DRO literature can be generally categorized into two major types. The first type is a neighborhood ball surrounding a baseline distribution, where the ball size is measured via a statistical distance. Common choices of distance include the class of $\phi$-divergence (\cite{ben2013robust,bertsimas2018robust,bayraksan2015data,Iyengar2005,hu2013kullback}) which also covers particular cases like the Renyi divergence (\cite{atar2015robust,dey2010entropy}) and total variation distance (\cite{jiang2018risk}), and the Wasserstein metric (\cite{esfahani2018data,blanchet2019quantifying,gao2023distributionally,xie2021distributionally,chen2018robust}). This type of DRO originates from stochastic control (\cite{petersen2000minimax}), and since then has found applications in various disciplines in economics (\cite{hansen2008robustness}), finance (\cite{glasserman2014robust}), queueing (\cite{jain2010optimality}) and simulation output analysis (\cite{Lam2018sensitivity}). More recently, it has been shown to have close connections with regularization and solution generalization behaviors (\cite{LAM2017301,duchi2021statistics,gotoh2018robust,Lam2016robust,lam2019recovering,gupta2019near,blanchet2021sample,blanchet2022confidence,gao2022finite,shafieezadeh2019regularization}), and as such gains surging popularity in machine learning and statistics. 

% (\cite{ghaoui2003worst,bertsimas2005optimal,delage2010distributionally,goh2010distributionally,wiesemann2014distributionally}) have received growing attention
The second major type of uncertainty sets, which is the focus of this paper, represents partial information on the distribution through moments and support (\cite{delage2010distributionally,bertsimas2005optimal,wiesemann2014distributionally,goh2010distributionally,ghaoui2003worst,hanasusanto2015distributionally,zhang2018ambiguous,xie2018deterministic}), marginal constraints (\cite{doan2015robustness,dhara2021worst}), and shape constraints (\cite{van2016generalized,li2019ambiguous,lam2017tail,chen2021discrete}). These uncertainty sets are especially useful in limited data situations or when data are partially unobservable due to limitations in the data collection process. Moment constraints are intuitive and easy to calibrate from either expert knowledge or data. On the other hand, shape constraints acts as an important complement to moments, one reason being that moment constraints alone can lead to unrealistic worst-case distributions, namely with sparsely discrete support and thus becoming overly conservative. Another reason is that shape constraints need no calibration and hence can be used with little or no data. Thanks to these advantages, shape constraints have been demonstrably useful in, for instance, extremal estimation where the objective function denotes a tail probability or related risk quantity (\cite{van2016generalized,lam2017tail}). In this situation, by the very definition of a tail, data are scarce and shape constraints offer plausible extrapolations that balance conservativeness and obtainable statistical information. It is worth noting that both moments and shape are important: Using moments alone can lead to unrealistically pessimistic distributions concentrated on a small handful of points (\cite{popescu2005semidefinite}), while using shape alone can lead to the density being too ``free" and does not conform to the natural boundary of the considered problem (\cite{lam2017tail}). In our main problem \eqref{DRO_original}, we thus consider the shape and the moment constraints both as integral parts of the formulation.

% Our investigation in this paper is more closely related 

% The ball sizes using these distances are calibrated from either density and entropy estimation (\cite{jiang2016data}), using goodness-of-fit statistics (\cite{ben2013robust,bertsimas2018robust}), or employing or developing nonparametric empirical likelihood theory (\cite{LAM2017301,duchi2021statistics,lam2019recovering,Blanchet2019,blanchet2022confidence}). 

% \DSQ{[DS: This also seems like a lot of extra references to me, but I didn't want to rewrite this paragraph given my unfamiliarity with the references.]}

% Next, we review the state-of-the-art methods in tackling \eqref{DRO_original}. 
To this end, various types of shape constraints have been considered in the literature, including monotonicity, unimodality and convexity (\cite{popescu2005semidefinite}) in the one-dimensional case, and $\alpha$-unimodality (with star-unimodality as a special case; \cite{dharmadhikari1988unimodality,van2016generalized,li2019ambiguous}), block unimodality (\cite{dharmadhikari1988unimodality}) and orthounimodality (\cite{lam2021orthounimodal}) in the multidimensional case. 
% \DSQ{[DS: The previous sentence implies Choquet addresses shape constraints, but the next sentence suggest moment constraints are key? Maybe this is ok.]} , similar to the representation of the points in a convex set in the Euclidean space with respect to a proper mixing distribution
In terms of methodologies to solve these shape-constrained problems, a major technique is to use the Choquet representation theorem. This theorem asserts that probability measures with a shape constraint can be represented as a mixture of more ``elementary'' distributions. More concretely, given a cumulative distribution function $F$ in a convex shape-constrained class of distributions, the Choquet representation establishes that:
% the following expression of $F$:
\begin{equation}
F(x) = \int U_z(x)d\mu(z),\label{choquet_rep}
\end{equation}
where $U_z(x)$ are the more ``elementary'' distributions (indexed by $z$) in this class and $\mu(z)$ is the mixing distribution. In this representation, $U_z(x)$ are fixed (as the extreme points of this convex class) and the mixing distribution varies as $F$ varies. Such a representation helps to reformulate the objective function and moment constraints in \eqref{DRO_original} as mixtures of moments over these elementary distributions, i.e.,
% in the following way:
\begin{equation*}
\mathbb{E}_F[\phi(X)]=\mathbb{E}_Z[\mathbb{E}_{U_Z}[\phi(X)]],
\end{equation*}
which turns the decision variable into the mixing distribution. When the inner expectation $\mathbb{E}_{U_Z}[\phi(X)]$ can be easily computed, the resulting optimization with respect to the mixing distribution (i.e., the distribution of $Z$) could be reducible to a solvable optimization problem (e.g., semidefinite program) and hence the exact optimal value can be obtained. 

 % With arbitrary choices of moments, the mixture of $\mathbb{E}_{U_Z}[\phi(X)]$ can be too complicated to be handled. One is that the resulting reduction may not necessarily lead to a solvable problem. 
While powerful, there are two main challenges of the Choquet representation that can hammer ultimate solvability. First is that the index $z$ in (\ref{choquet_rep}) can still be infinite-dimensional depending on the shape constraint (e.g., orthounimodality; \cite{lam2021orthounimodal}), and thus the optimization problem with respect to the mixing distribution is still infinite-dimensional. In this case, unless a further finite-dimensional reduction can be found, the problem can remain intractable. Another more important challenge is that the moments in the objective function and constraints must be in certain restrictive forms so that the inner expectation $\mathbb{E}_{U_Z}[\phi(X)]$ can be easily computed. Consequently, in \cite{popescu2005semidefinite} which focuses on one-dimensional shapes including convexity and unimodality, the functions $\phi_j(x)$ in the objective and moment constraints must be piecewise polynomial. In \cite{lam2021orthounimodal} that considers orthounimodality, the constraints must be expressed as probabilities but not general moments, and the objective function must be the probability of a set that is expressed as an epigraph of a function. Additionally, both \cite{van2016generalized} and \cite{li2019ambiguous} can only handle the first and second moment constraints for $\alpha$-unimodality, with the objective function in \cite{van2016generalized} restricted to the probability of the random vector falling outside a prescribed polyhedron, and the objective function in \cite{li2019ambiguous} being either the probability of the random vector lying in a hyperplane or a special function derived from the conditional value-at-risk. The left half of Table \ref{table_conditions} summarizes the requirements in this existing literature for all the shape constraints that we contribute to in this paper.
% , including all the discussed shapes in the one-dimensional case and orthomodality in the multivariate case. (POSSIBLE TO ADD ONE MORE COLUMN OR INDICATE WHICH CASE IS ONE OR HIGHER DIMENSION?)(ZHENYUAN: DONE.)

% where  and demonstrates the mild conditions in our paper. 
% Similarly, In 

% (USE TABLE 2 HERE TO EXPLAIN THE CHALLENGES IN THE LITERATURE, AND BE MORE SPECIFIC, E.G., WE SHOULD EXPLAIN THE POLYNOMIAL REQUIREMENT ETC., AND IN YOUR EXPLANATION ABOVE THERE'S NO DESCRIPTIONS ON WHAT SHAPES THE PAPERS ARE ABOUT (WE DON'T EXPECT THE READER TO GO INTO THESE PAPERS SO WE SHOULD BE CLEAR HERE..); CORRESPONDINGLY, CITE THE RELEVANT PAPERS IN THE TABLE ENTRIES IN TABLE 2; ALSO, VISUALLY, IT MIGHT BE BETTER TO PUT VERTICAL LINES BETWEEN THE MAIN COLUMNS OF SHAPE CONSTRAINTS, EXISTING LITERATURE AND OUR WORK)

% (ZHENYUAN: DONE.)

\begin{table}
\centering
\caption{Comparison between existing literature and our approach in terms of conditions on $\phi_j(x)$ in the objective and additional moment constraints. $*$ denotes one-dimensional shapes and $\dagger$ denotes multidimensional shapes.}
% \caption{Conditions on the objective function and moment constraints for tractable importance-weighted SAA}
\label{table_conditions}
\resizebox{\columnwidth}{!}{
\begin{tblr}{
  cells = {c},
  cell{1}{2} = {c=2}{},
  cell{1}{4} = {c=2}{},
  cell{3}{2} = {r=3}{},
  cell{3}{3} = {r=3}{},
  cell{3}{4} = {r=4}{},
  cell{3}{5} = {r=4}{},
  vline{2,4} = {1-6}{},
  hline{1,7} = {-}{0.08em},
  hline{2} = {2-5}{},
  hline{3} = {-}{},
  hline{6} = {1-3}{},
}
                           & Existing literature                       &                                             & IW-SAA                         &                                                       \\
Shape constraints          & Objective                                 & {Additional moment\\constraints}             & Objective                        & {Additional moment\\constraints}                       \\
monotonicity\textsuperscript{*}               & {piecewise polynomial \\ (\cite{popescu2005semidefinite})}                      & {piecewise polynomial\\\& Slater's condition \\ (\cite{popescu2005semidefinite})} & {mild integrability\\condition \\(Corollaries \ref{1D_compact}-\ref{OU_unbounded})} & {mild integrability\\condition\\\& Slater's condition \\(Corollaries \ref{1D_compact}-\ref{OU_unbounded})} \\
convexity\textsuperscript{*} (bounded domain) &                                           &                                             &                                  &                                                       \\
unimodality\textsuperscript{*}                &                                           &                                             &                                  &                                                       \\
{\\orthounimodality\textsuperscript{\textdagger}}           & {indicator of the\\epigraph of a function \\ (\cite{lam2021orthounimodal})
} & {indicator of \\hyperrectangles \\ (\cite{lam2021orthounimodal})
}      &                                  &       \\
\end{tblr}
}
\end{table}

\subsection{Contributions}\label{sec:SAA_DRO_contributions}

Motivated by the above challenges, we propose an alternative sampling-based approach to solve shape-constrained distributional optimization. Rather than using the mixture idea in the Choquet representation, our approach first conducts a \emph{change of measure} from the underlying distribution to a pre-selected sampling distribution. Then, by drawing Monte Carlo samples from this latter distribution, we formulate a suitable sample average approximation (SAA) (\cite{shapiro2021lectures}) where the decision variable is now the importance weights between the original and the sampling distributions. 

% (\cite{tokdar2010importance}) ,kloek1978bayesian,geweke1989bayesian
We call our approach importance-weighted (IW) SAA. The change of measure idea underpins the technique of importance sampling, which is especially useful as a variance reduction method (\cite{bucklew2004introduction,juneja2006rare,blanchet2012state}) and in Bayesian inference (\cite{liu2001monte}). In our setting, importance sampling is used to translate the decision variables from the underlying distribution in \eqref{DRO_original} to the importance weights, so that we can now sample from a \emph{known} distribution to conduct SAA. In this regard, the closest works to our approach are \cite{glasserman2014robust} and \cite{ghosh2019robust}, which like us consider worst-case distributional optimization problems. In particular, \cite{glasserman2014robust} considers uncertainty sets constructed as divergence-based neighborhood balls surrounding a baseline distribution, while \cite{ghosh2019robust} considers additionally moment-based uncertainty sets and essentially discrete distributions. Nonetheless, when shape constraints are present, challenges arise regarding the representability of these shapes in terms of SAA. Moreover, the SAA solution may violate the feasibility of the additional moment constraints in \eqref{DRO_original}, which adds complications in convergence analyses as it necessitates the consideration of both optimality and feasibility. These challenges motivate us to develop strong duality results for both the original and SAA counterparts in order to remove the need to check feasibility for the moment constraints. With this, we study statistical consistency and convergence rates of our IW-SAA, by leveraging tools from empirical processes (\cite{van1996weak}) that critically hinge on the functional complexities of the shape constraints. Our study on this interplay of duality and empirical process machinery to analyze shape-constrained optimization appears to be the first in the literature. It ultimately leads to solution schemes for problems that are beyond the reach of existing Choquet-based approaches; see the right half of Table \ref{table_conditions}.

 % A closely-related field to shape-constrained DRO is the The general formulation of shape-constrained nonparametric $M$-estimation is to 
 \subsection{Other Related Works}
We close this introduction by comparing our study with several lines of related works. First is shape-constrained nonparametric $M$-estimation in statistics, which concerns the optimization of  likelihood function or empirical loss function subject to shape constraints as a means to obtain estimators for statistical quantities. \cite{royset2015fusion}, \cite{pavlides2012nonparametric}, and \cite{sager1982nonparametric} and \cite{polonik1998silhouette} study density estimation under one-dimensional shape constraints, the so-called scale mixture of uniform class of distributions, and orthounimodal constraints respectively. \cite{royset2020variational} and \cite{royset2020approximations} investigate consistency and convergence rates. \cite{seijo2011nonparametric,guntuboyina2015global,mukherjee2024least} study (quasi)convex least squares estimation. \cite{fang2021multivariate} study multivariate extensions of isotonic regression and total variation denoising. Compared with this literature, our shape-constrained distributional optimization bears two key differences. First is our focus on the optimal value instead of the optimal solution, which arises from our disparate motivation from nonparametric $M$-estimation. In DRO, the optimal value of \eqref{DRO_original} is the worst-case value of the objective function, which signifies and provides an upper bound on the actual decision performance. In contrast, the optimal solution in nonparametric $M$-estimation corresponds to the statistical estimator and is thus of primary interest in statistics. The second difference is the lack of moment constraints in nonparametric $M$-estimation, compared to our formulation \eqref{DRO_original} that critically comprises both moments and shape constraints as discussed above. These differences lead us to develop new results on the interaction of duality with empirical processes and convergence guarantees for optimal values that are not studied in the nonparametric $M$-estimation literature.

Our work is also related to functional optimization that, like our problem, is infinite-dimensional in nature, but its decision variable is not necessarily a density. In contrast to deterministic calculus of variation problems (\cite{gelfand2000calculus}), some recent work studies stochastic counterparts. \cite{singham2017sample} first propose to use SAA to approximate functional optimization formulations motivated from principal-agent problems that are analytically intractable. \cite{singham2019sample} further studies bootstrapping to estimate the optimal value, though without consistency guarantees. \cite{singham2020sample} investigates bounding techniques (\cite{mak1999monte,bayraksan2006assessing}) for optimal values and theoretical convergence. Similar to our work, these studies involve SAA problems with decision variables of growing dimensions. On the other hand, they focus on essentially one-dimensional monotonic functions as decision variables, and do not include auxiliary moment constraints or distributional variables that necessitate changes of measure. As such, they do not investigate duality theory and the elaborate usage of empirical processes for general shapes. Lastly, \cite{zhou2022sample} studies the convergence of SAA in functional optimization that is driven by Gaussian processes. However, their sampling is on the underlying Gaussian variable that is different from our change of measure and subsequent developments to address shape-constrained problems.

% Finally, comprehensive materials on SAA can be found in 

The remainder of this paper is organized as follows.
% Section \ref{sec:formulation} presents the formulation of the DRO problem with shape and moment constraints. 
Section \ref{sec:SAA main} gives an overview of our IW-SAA approach and theoretical guarantees. Section \ref{sec:example} specializes our main guarantees to various shape constraints, and discusses our scope of applicability and limitations. Section \ref{sec:SAA_DRO_complexity} discusses computational challenge and remedies associated with the number of constraints. Section \ref{sec:num} presents numerical results to validate our theorems and demonstrates our empirical performances. The Appendix contains additional technical details and proofs for all results in this paper.
\section{IW-SAA and General Theoretical Guarantees\label{sec:SAA main}}

 % (WOULDN'T THIS REQUIREMENT BE STATED IN THE ORIGINAL FORMULATION \eqref{DRO_original} ALREADY?)(ZHENYUAN: I MOVED THE EXPLANATION ON $I(x\in\mathcal{X})$ TO \eqref{DRO_original}) 
We present our IW-SAA to solve (\ref{DRO_original}). First, we select a suitable known and simulatable (i.e., allows the generation of random copies) positive probability density $g$ on $\mathcal{X}$, and rewrite (\ref{DRO_original}) via a change of measure from $f$ to $g$ as%
\begin{align}
(\mathcal{P}):\quad\quad\quad\sup_{L}\text{ }  &  \mathbb{E}_{g}[\phi_{0}(X)L(X)]\nonumber\\
\text{subject to }  &  \mathbb{E}_{g}[\phi_{j}(X)L(X)]\leq\mu_{j},j=1,\ldots,m\label{DRO_g}\\
&  \mathbb{E}_{g}[\phi_{j}(X)L(X)]=\mu_{j},j=m+1,\ldots,l\nonumber\\
&  L\in\mathcal{L}\nonumber
\end{align}
where $L=f/g$ is the likelihood ratio and $\mathcal{L}=\{f/g:f\in\mathcal{F}\}$ contains the corresponding shape constraints for $L$. We require the positivity of $g$ to ensure the existence of the likelihood ratio. Now, since the probability density $g$ is known and simulatable, we can draw $n$ i.i.d.\ samples $X_{1},\ldots,X_{n}$ from $g$ and formulate the SAA counterpart of ($\mathcal{P}$) as%
\begin{align}
(\mathcal{P}_{n}):\quad\quad\quad\sup_{L}\text{ } &  \frac{1}{n}\sum_{i=1}^{n}\phi_{0}(X_{i})L(X_{i})\nonumber\\
\text{subject to } &  \frac{1}{n}\sum_{i=1}^{n}\phi_{j}(X_{i})L(X_{i})\leq\mu_{j},j=1,\ldots,m\label{SAA_DRO}\\
&  \frac{1}{n}\sum_{i=1}^{n}\phi_{j}(X_{i})L(X_{i})=\mu_{j},j=m+1,\ldots,l\nonumber\\
&  L\in\mathcal{L}\nonumber
\end{align}

Note that the SAA problem ($\mathcal{P}_{n}$) is potentially infinite-dimensional, since its decision space $\mathcal L$ is the same as that of problem ($\mathcal{P}$). To obtain a finite-dimensional reduction of ($\mathcal{P}_{n}$), we need to discretize the shape constraint $L\in\mathcal{L}$ in terms of samples $X_1,\ldots,X_n$ that leads to what we call \emph{finite-dimensional reducibility}. To explain, consider the example where $\mathcal{F}$ is the class of monotonically increasing functions. The space $\mathcal{L}$ can be written as $\mathcal{L}=\{L:L(x_1)g(x_1)\le L(x_2)g(x_2)\text{ for }x_1\le x_2\}$. A corresponding discretized version, denoted by $\mathcal{L}_n$, can be written as $\mathcal{L}_n=\{(L(X_1), \ldots,L(X_n)):L(X_i)g(X_i)\le L(X_j)g(X_j)\text{ for }X_i\le X_j\}$ which is a subset of $\mathbb{R}^n$. This discretized version is invertible in the sense that, given any discrete values $(L_1, \ldots,L_n)\in \mathcal{L}_n$, we can recover a function $L\in\mathcal{L}$ such that $L(X_i)=L_i$ for all $i$ by interpolation:
\[
L(x)=\left\{
\begin{array}[c]{l}%
L_{i_1}g(X_{i_1})/g(x),x\in(-\infty,X_{i_1}]\\
L_{i_2}g(X_{i_2})/g(x),x\in(X_{i_1},X_{i_2}]\\
\cdots\\
L_{i_n}g(X_{i_n})/g(x),x\in(X_{i_{n-1}},\infty)
\end{array}
\right.,
\]
where $(i_1,\ldots,i_n)$ is a permutation of $(1,\ldots,n)$ such that $X_{i_1}\leq\cdots\leq X_{i_n}$. Consequently, we can replace $L\in\mathcal{L}$ by $(L(X_1), \ldots,L(X_n))\in \mathcal{L}_n$ in ($\mathcal{P}_{n}$) without changing its optimal value, and the resulting optimization problem is a finite-dimensional linear program with decision variables $(L(X_1), \ldots,L(X_n))$. 
In general, we say the SAA problem ($\mathcal{P}_{n}$) has a \emph{finite-dimensional reduction} if the discretization of $L\in\mathcal{L}$  like above is invertible, in which case the decision variable can be chosen as $(L(X_1),\ldots,L(X_n))$ without loss of generality. The left half of Table \ref{checkbox} displays, for various shape constraints, whether ($\mathcal{P}_{n}$) has a finite-dimensional reduction. In particular, the four constraints listed in Table \ref{table_conditions} are finite-dimensionally reducible and result in readily solvable linear programs. Details of these reformulations will be presented in Section \ref{sec:example}.
 % and whether the SAA optimal value has statistical guarantees for estimating the true DRO optimal value
% in $L(X_{i}),i=1,\ldots,n$.
% and thus can be solved easily. Details of the reformulation will be presented in Section \ref{sec:example}.
%relies on whether there is a tractable equivalent reformulation of the shape constraint $L\in\mathcal{L}$ using only the discrete values $X_{i},i=1,\ldots,n$. \DSQ{[DS:What exactly do you mean by tractable? \red{ZL will describe the two ways here.} Also discuss how interpolation works as related to previous paragraph.]} \DS{For example, it is easy to impose monotonically increasing densities in the SAA problem by requiring that $L(X_i)g(X_i)\le L(X_j)g(X_j)$ for $X_i\le X_j$.} , and thus is solvable using off-the-shelf algorithms

\begin{table}
\centering
\caption{Finite-dimensional reducibility and statistical guarantees of IW-SAA. $\checkmark$ means it has this property. $\protect\crossmark$ means there is a counterexample showing that it does not have this property. Blank means this is an open question.}
\label{checkbox}
\resizebox{\columnwidth}{!}{
\begin{tabular}{cccc}
\toprule
\textbf{Shape constraints}              & \textbf{Finite-dimensional reducibility} & \textbf{Consistency} & \textbf{Canonical convergence rate}  \\
\midrule
monotonicity                 & $\checkmark$           & $\checkmark$         & $\checkmark$                         \\
convexity (bounded domain)   & $\checkmark$           & $\checkmark$         & $\checkmark$                         \\
convexity (unbounded domain) &                        & $\checkmark$         & $\checkmark$                         \\
unimodality                  & $\checkmark$           & $\checkmark$         & $\checkmark$                         \\
$\alpha$-unimodality                     &                        & $\crossmark$                     & $\crossmark$                                      \\
block unimodality            &                        & $\checkmark$         &                                      \\
orthounimodality             & $\checkmark$           & $\checkmark$         &                                      \\
\bottomrule
\end{tabular}
}
\end{table}

In the next two subsections, we present our main theoretical results on IW-SAA, including strong duality (Section \ref{sec:strong duality}) and statistical guarantees (Section \ref{sec:guarantee}).

\subsection{Strong Duality}\label{sec:strong duality}

We prove strong duality to transform ($\mathcal{P}$) and ($\mathcal{P}_{n}$) into unconstrained optimization problems. The Lagrangian dual problems of ($\mathcal{P}$) and ($\mathcal{P}_{n}$) are given by%
\begin{equation}
(\mathcal{D}):\inf_{\lambda\in\mathbb{R}_{+}^{m}\times\mathbb{R}^{l-m}}\sup_{L\in\mathcal{L}}\text{ }\left\{  \mathbb{E}_{g}[\phi_{0}(X)L(X)]-\sum_{j=1}^{l}\lambda_{j}(\mathbb{E}_{g}[\phi_{j}(X)L(X)]-\mu_{j})\right\}  ,
\label{DRO_Lagrange}%
\end{equation}%
\begin{equation}
(\mathcal{D}_{n}):\inf_{\lambda\in\mathbb{R}_{+}^{m}\times\mathbb{R}^{l-m}}\sup_{L\in\mathcal{L}}\text{ }\frac{1}{n}\sum_{i=1}^{n}\left(  \phi_{0}(X_{i})L(X_{i})-\sum_{j=1}^{l}\lambda_{j}(\phi_{j}(X_{i})L(X_{i})-\mu_{j})\right)  . \label{SAA_DRO_Lagrange}%
\end{equation}
The following assumptions are needed to establish strong duality.

\begin{assumption}
\label{integrability_condition}For any $L\in\mathcal{L}$, the random variables $\phi_{j}(X)L(X),j=0,\ldots,l$ are integrable with respect to $g$.
\end{assumption}

\begin{assumption}
\label{convex_feasible_set}$\mathcal{L}$ is a convex set.
\end{assumption}

\begin{assumption}
\label{interior_point}There exists a feasible function $L_{0}\in\mathcal{L}$ s.t.
\[
\mathbb{E}_{g}[\phi_{j}(X)L_{0}(X)]<\mu_{j},j=1,\ldots,m,\quad \mathbb{E}_{g}[\phi_{j}(X)L_{0}(X)]=\mu_{j},j=m+1,\ldots,l.
\]
Additionally, $(\mu_{m+1},\ldots,\mu_{l})$ is an interior point of the following set%
\[
\{(\mathbb{E}_{g}[\phi_{m+1}(X)L(X)],\ldots,\mathbb{E}_{g}[\phi_{l}(X)L(X)]):L\in\mathcal{L\}.}%
\]

\end{assumption}

 % (ZHENYUAN: THESE REFERENCES ARE FROM Mottet C, Lam H (2017) On optimization over tail distributions)
Assumption \ref{integrability_condition} ensures the dual problem is well-defined. Assumption \ref{convex_feasible_set} guarantees the optimization problem is convex, which is the usual assumption when establishing strong duality. Assumption \ref{interior_point} is a Slater condition for the distributional optimization problem, similar to the conditions in, e.g., \cite{karlin1966tchebycheff,smith1995generalized,shapiro2001duality,bertsimas2005optimal,popescu2005semidefinite}. Note that whether Assumptions \ref{integrability_condition}-\ref{interior_point} hold or not depends only on $\phi_{j}(X)$ and $\mathcal{F}$ but not the choice of sampling distribution $g$. 
% \QR{We state them in the current form because they are easy to work with in the following.}

% Under these assumptions, 
Under these assumptions, we develop strong Lagrangian duality in both the distributional optimization problem and its SAA counterpart. 
We use $val(\cdot)$ to denote the optimal value of optimization problems. Since the SAA optimal value is the supremum of possibly an uncountable number of random variables, it may not be measurable. As such we employ outer (with a superscript $*$) and inner (with a subscript $*$) probabilities and expectations to handle the measurability issue. Stochastic convergence is understood under the outer expectation. We delegate these technical measurability details to Appendix \ref{sec:remedy}. 
% (DISCUSS NOTATION * BELOW?)(ZHENYUAN: DEFINED $*$ HERE)
% The following establishes strong duality for the original distributional optimization problem.

\begin{theorem}[Strong duality for $\mathcal{P}$]
\label{strong_duality}If Assumption \ref{integrability_condition} holds, then weak duality holds, i.e., $val(\mathcal{P})\leq val(\mathcal{D})$. Furthermore, if Assumptions \ref{convex_feasible_set} and \ref{interior_point} also hold, then strong duality holds, i.e., $val(\mathcal{P})=val(\mathcal{D})$.
\end{theorem}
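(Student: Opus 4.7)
The plan is to dispatch weak duality by a direct calculation, and then establish strong duality through a separating hyperplane argument applied to a convex set in $\mathbb{R}^{l+1}$ built from all achievable objective and moment values.

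Weak duality (using only Assumption \ref{integrability_condition}) is immediate. Fix any primal-feasible $L \in \mathcal{L}$ and any dual-feasible $\lambda$ with $\lambda_j \ge 0$ for $j \le m$. Each summand of the penalty $-\lambda_j(\mathbb{E}_g[\phi_j(X)L(X)] - \mu_j)$ is nonnegative (for $j \le m$, $\lambda_j \ge 0$ multiplies a nonpositive residual; for $j > m$ the residual vanishes), so $\mathbb{E}_g[\phi_0(X)L(X)]$ is bounded above by the Lagrangian value at $(L,\lambda)$. Taking sup over $L \in \mathcal{L}$ on the right and then inf over admissible $\lambda$ yields $val(\mathcal{P}) \le val(\mathcal{D})$.

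For strong duality, let $W = val(\mathcal{P})$ and introduce the convex set
\[
\hat K = \left\{(v_0, v_1, \ldots, v_l) \in \mathbb{R}^{l+1} : \exists L \in \mathcal{L},\ v_0 \le \mathbb{E}_g[\phi_0 L],\ v_j \ge \mathbb{E}_g[\phi_j L] - \mu_j \text{ for } j \le m,\ v_j = \mathbb{E}_g[\phi_j L] - \mu_j \text{ for } j > m\right\}.
\]
Convexity of $\hat K$ follows from Assumption \ref{convex_feasible_set} and linearity of the integrals in $L$. By the very definition of $W$, no point of the form $(W+\epsilon, 0, \ldots, 0)$ with $\epsilon > 0$ lies in $\hat K$, since such a witness would give a primal-feasible $L$ with objective exceeding $W$. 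Applying the separating hyperplane theorem to $\hat K$ and this forbidden ray produces a nonzero $(c_0, c_1, \ldots, c_l)$ satisfying $c_0 v_0 + \sum_j c_j v_j \le c_0(W+\epsilon)$ for all $(v_0,\ldots,v_l) \in \hat K$ and $\epsilon > 0$. The recession directions of $\hat K$ (freely decreasing $v_0$; freely increasing $v_j$ for $j \le m$) immediately force $c_0 \ge 0$ and $c_j \le 0$ for $j \le m$. Once $c_0 > 0$ is established, dividing through and setting $\lambda_j = -c_j/c_0$ gives dual-feasible multipliers; plugging back the canonical choice $v_0 = \mathbb{E}_g[\phi_0 L]$, $v_j = \mathbb{E}_g[\phi_j L] - \mu_j$ for an arbitrary $L \in \mathcal{L}$ into the separating inequality, then letting $\epsilon \downarrow 0$, yields $\sup_L \{\mathbb{E}_g[\phi_0 L] - \sum_j \lambda_j(\mathbb{E}_g[\phi_j L] - \mu_j)\} \le W$, matching the weak-duality bound.

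The hard part is excluding the degenerate case $c_0 = 0$, and this is exactly where Assumption \ref{interior_point} becomes decisive. Both halves of the Slater condition are needed in tandem: if $c_0 = 0$, the separating inequality collapses to $\sum_j c_j v_j \le 0$ on $\hat K$. The interior-point property of $(\mu_{m+1},\ldots,\mu_l)$ in the image of $\mathcal{L}$ under the equality-moment map allows perturbing $L$ about the Slater point $L_0$ to drive $(v_{m+1},\ldots,v_l)$ over an open neighborhood of $0$, while strict inequality-feasibility of $L_0$ keeps the $j \le m$ coordinates uniformly negative under such small perturbations; substituting these perturbed points forces $c_j = 0$ for $j > m$. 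A second use of strict inequality-feasibility at $L_0$, combined with the already-established sign $c_j \le 0$ for $j \le m$, then forces $c_j = 0$ there as well, contradicting $(c_0,\ldots,c_l) \ne 0$. This interplay between the equality-side interior-point and inequality-side strict feasibility is what rescues the argument from the dimensional deficiency injected by the equality constraints into the feasible set.
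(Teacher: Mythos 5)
Your proposal is correct and follows essentially the same route as the paper: both construct the convex set of achievable objective/residual values, invoke a separating (resp.\ supporting) hyperplane at the optimal value, and use the two halves of the Slater condition in Assumption \ref{interior_point} to rule out a degenerate normal with zero objective coefficient, differing only in how the $c_0=0$ case analysis is organized (you first kill the equality multipliers by perturbing around $L_0$, the paper splits on whether the inequality multipliers vanish). The only omission is the trivial edge case $val(\mathcal{P})=+\infty$, where the separation argument does not apply but weak duality already forces equality (and $val(\mathcal{P})>-\infty$ follows from the feasible point supplied by Assumption \ref{interior_point}).
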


% Next we show strong duality of importance-weighted SAA.

\begin{theorem}[Strong duality for $\mathcal{P}_n$]
\label{strong_duality_SAA}For any sample size $n$, weak duality for IW-SAA holds, i.e., $val(\mathcal{P}_{n})\leq val(\mathcal{D}_{n})$ for any realization of $X_{1},\ldots,X_{n}$. If Assumptions \ref{integrability_condition}-\ref{interior_point} hold, then strong duality holds at all but finitely often $n$, i.e.,
\begin{align*}
\mathbb{P}_{\ast}\left(\bigcup_{n=1}^{\infty}\bigcap_{k=n}^{\infty}\{val(\mathcal{P}_{k})=val(\mathcal{D}_{k})\}\right)  =\mathbb{P}_{\ast}(\exists N_{0}\in\mathbb{N}\text{ s.t. }val(\mathcal{P}_{n})=val(\mathcal{D}_{n}),\forall n\geq N_{0})=1
\end{align*}
as well as asymptotically, i.e., as $n\to\infty$, $\mathbb{P}_{\ast}(val(\mathcal{P}_{n})=val(\mathcal{D}_{n}))\rightarrow1.$

\end{theorem}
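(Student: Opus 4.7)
The plan is to establish weak duality pathwise by a standard Lagrangian argument, then to show that the population Slater condition of Assumption \ref{interior_point} transfers to the empirical problem eventually almost surely, at which point the proof of Theorem \ref{strong_duality} can be replayed with the empirical measure in place of $g$. For weak duality, I fix any realization of $X_1,\ldots,X_n$ and any $\lambda\in\mathbb{R}_+^m\times\mathbb{R}^{l-m}$. For any $L$ feasible for $(\mathcal{P}_n)$, the penalty $-\sum_{j=1}^l \lambda_j(\frac{1}{n}\sum_i\phi_j(X_i)L(X_i)-\mu_j)$ is nonnegative, since for $j\leq m$ it is the product of $\lambda_j\geq 0$ with a nonpositive term and for $j>m$ it vanishes; taking the supremum over $L\in\mathcal{L}$ and the infimum over $\lambda$ gives $val(\mathcal{P}_n)\leq val(\mathcal{D}_n)$ pathwise.

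For strong duality I plan to build a pathwise Slater point $\tilde L_n\in\mathcal{L}$ for $(\mathcal{P}_n)$. Let $L_0$ be the population Slater point of Assumption \ref{interior_point}, whose equality moment vector equals $\mu^{eq}:=(\mu_{m+1},\ldots,\mu_l)$ and whose inequality moments satisfy $\mathbb{E}_g[\phi_j(X)L_0(X)]<\mu_j$. The interior property lets me pick, for each coordinate direction $e_k$ and sufficiently small $\varepsilon>0$, functions $L_k^{\pm}\in\mathcal{L}$ whose population equality moment vectors equal $\mu^{eq}\pm\varepsilon e_k$. Applying the strong law of large numbers to the finitely many fixed functions $\phi_j L_0$ and $\phi_j L_k^{\pm}$, the empirical equality moment vectors $\hat v_{0,n}$ and $\hat v_{k,n}^{\pm}$ converge almost surely to $\mu^{eq}$ and $\mu^{eq}\pm\varepsilon e_k$ respectively. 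Because $\mu^{eq}$ sits strictly inside the convex hull of these population vectors, a continuity/LP-sensitivity argument produces nonnegative weights $\hat\alpha_{0,n},\hat\alpha_{k,n}^{\pm}$ summing to one, with $\hat\alpha_{0,n}\to 1$ almost surely and $\sum \hat\alpha\,\hat v=\mu^{eq}$ exactly. By Assumption \ref{convex_feasible_set}, the convex combination $\tilde L_n:=\hat\alpha_{0,n}L_0+\sum_k(\hat\alpha_{k,n}^{+}L_k^{+}+\hat\alpha_{k,n}^{-}L_k^{-})$ lies in $\mathcal{L}$, satisfies the empirical equalities exactly, and has empirical inequality moments converging almost surely to $\mathbb{E}_g[\phi_j(X)L_0(X)]<\mu_j$ because the perturbation weights vanish. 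Hence $\tilde L_n$ is strictly feasible for $(\mathcal{P}_n)$ eventually almost surely, and by the same construction $\mu^{eq}$ lies in the interior of the empirical equality moment set eventually almost surely.

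With the empirical Slater condition in hand, the proof of Theorem \ref{strong_duality} applies verbatim to $(\mathcal{P}_n)$, viewed as an instance of $(\mathcal{P})$ in which $g$ is replaced by the empirical measure $n^{-1}\sum_i\delta_{X_i}$: Assumption \ref{integrability_condition} is automatic in the discrete case, Assumption \ref{convex_feasible_set} is unchanged, and Assumption \ref{interior_point} is the empirical Slater condition just constructed. This yields $val(\mathcal{P}_n)=val(\mathcal{D}_n)$ on the inner-probability-one event where the construction of $\tilde L_n$ succeeds from some $N_0$ onward, giving the eventually-a.s.\ statement; the asymptotic-in-probability statement then follows from continuity of inner probability applied to the nested events $\bigcap_{k\geq n}\{val(\mathcal{P}_k)=val(\mathcal{D}_k)\}$. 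The main obstacle is the construction in the second paragraph: matching the empirical equalities \emph{exactly} while preserving strict inequality feasibility. This is precisely where the interior property of the population moment set (beyond mere feasibility of $L_0$) is indispensable, as it supplies the perturbation directions $L_k^{\pm}$ around $L_0$ whose weights in $\tilde L_n$ vanish as $n\to\infty$ and therefore do not destroy the strict inequalities inherited from $L_0$.
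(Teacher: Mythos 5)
Your proposal follows essentially the same strategy as the paper's proof: weak duality pathwise via the standard Lagrangian inequality, then a transfer of the Slater condition (Assumption \ref{interior_point}) to the empirical problem by applying the strong law of large numbers to finitely many fixed perturbation functions, after which the proof of Theorem \ref{strong_duality} is replayed with the empirical measure in place of $g$. The only substantive difference is the choice of generators: you perturb along the $2(l-m)$ coordinate directions $\mu^{eq}\pm\varepsilon e_k$ and let the correction weights vanish as $n\to\infty$, whereas the paper uses the $2^{l-m}$ orthant corners $(\mu_{m+1},\ldots,\mu_l)+(\pm\delta,\ldots,\pm\delta)$ and a fixed mixture weight $1/M_2$ with an explicit verification of strict inequality feasibility. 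Both work, since in either case $\mu^{eq}$ lies in the interior of the convex hull of the population moment vectors with a uniform margin, so the empirical system remains solvable with nonnegative weights for large $n$.

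One step does need repair: you deduce the in-probability statement from ``continuity of inner probability applied to the nested events $\bigcap_{k\geq n}\{val(\mathcal{P}_k)=val(\mathcal{D}_k)\}$.'' Inner probability is \emph{not} continuous from below for general (possibly non-measurable) increasing sequences of events — one can partition $[0,1]$ into countably many sets each of inner measure zero whose increasing unions have inner measure tending to one only at the limit set itself — so this appeal is not valid as stated. The fix is immediate from what you have already built: the event $B_n$ on which your construction of $\tilde L_n$ succeeds is defined by closeness of finitely many sample averages of fixed integrable functions, hence is measurable, satisfies $\mathbb{P}(B_n)\to 1$ by the weak law of large numbers, and is contained in $\{val(\mathcal{P}_n)=val(\mathcal{D}_n)\}$; therefore $\mathbb{P}_{\ast}(val(\mathcal{P}_n)=val(\mathcal{D}_n))\geq\mathbb{P}(B_n)\to 1$, which is exactly how the paper concludes. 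You should also note the degenerate cases $m=0$ and $m=l$ (no inequality or no equality constraints), which the paper treats explicitly but which your argument handles with the obvious simplifications.
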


% We use inner probability to address the measurability issue, and from Remark \ref{ref:remark1} the two results in Theorem \ref{strong_duality_SAA} must be proved separately ((\ref{eq:SD2}) is not implied by (\ref{eq:SD1})).

\subsection{Statistical Guarantees}\label{sec:guarantee}
We prove the consistency and canonical convergence rate of IW-SAA.
% under general conditions. 
We first introduce some notation. We write $P_{g}$ as the probability measure induced by $g$ and define the corresponding empirical distribution $P_{n}$ based on $n$ observations from $g$ as $P_{n}=(1/n)\sum_{i=1}^{n}\delta_{X_{i}}$, where $\delta_{X_{i}}$ is the point mass at $X_i$. 
% \DSQ{[I changed the function from $f(X)$ to $\phi(X)$ to align with our formulation where $f$ is a specific decision variable density.]} \DS{ in empirical process theory
Following the convention in empirical processes, the expectations of $h(X)$ under $P_{g}$ and $P_{n}$ are written as $P_{g}h:=\mathbb{E}_{g}[h(X)]$ and $P_{n}h:=(1/n)\sum_{i=1}^{n}h(X_{i})$. An envelope of a function class $\mathcal{F}$ is any function $F(x)$ such that $\sup_{h\in\mathcal{F}}|h(x)|\leq F(x)$. We write $||Q||_{\mathcal{F}}:=\sup\{|Qh|:h\in\mathcal{F\}}$ for any signed measure $Q$ and function class $\mathcal{F}$. We define the empirical process $G_{n}$ as $G_{n}=\sqrt{n}(P_{n}-P_{{g}})$.
% Furthermore, a uniform law of large numbers holds for $\mathcal{F}$ if $||P_n-P_f||_{\mathcal{F}}:=\sup_{\phi\in\mathcal{F}}|P_n\phi-P_f\phi|\stackrel{P}{\to}0.$

Now we state our assumptions. We begin with the finiteness of the optimal value.

\begin{assumption}
% [{\textbf{Finite optimal}}]
\label{finite_opt}$val(\mathcal{P})$ is finite.
\end{assumption}

The next assumption states that the function in the objective and constraints in ($\mathcal{P}$) are so-called $P_g$\textit{-Glivenko-Cantelli} ($P_{g}$-GC), i.e., they satisfy the uniform law of large numbers. 

\begin{assumption}
\label{GC_class}The function classes $\mathcal{F}_{j}:=\{\phi_{j}(x)L(x):L\in\mathcal{L}\},j=0,\ldots,l$ have integrable envelopes and are $P_{g}$-GC, i.e., $||P_{n}-P_{g}||_{\mathcal{F}_{j}}\overset{\text{a.s.*}}{\rightarrow}0$, for any $j=0,\ldots,l$.
\end{assumption}

With these, we have the consistency of IW-SAA.
% We next present a theorem establishing consistency of importance-weighted SAA.  The proof is based on strong duality and the uniform law of large numbers.

\begin{theorem}[Consistency]
\label{consistency}Suppose Assumptions \ref{integrability_condition}-\ref{GC_class} hold. Then we have $|val(\mathcal{P}_{n})-val(\mathcal{P})|\overset{\text{a.s.*}}{\rightarrow}0.$
\end{theorem}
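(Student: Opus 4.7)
The plan is to use the strong duality results of Theorems \ref{strong_duality} and \ref{strong_duality_SAA} to translate the statement into a convergence of dual optimal values, where the decision variable is the finite-dimensional Lagrange multiplier $\lambda \in \mathbb{R}_{+}^{m} \times \mathbb{R}^{l-m}$, and then to combine a Slater-based localization of $\lambda$ with the Glivenko-Cantelli hypothesis. Precisely, Theorem \ref{strong_duality} gives $val(\mathcal{P}) = val(\mathcal{D})$, while Theorem \ref{strong_duality_SAA} gives $val(\mathcal{P}_n) = val(\mathcal{D}_n)$ for all $n$ larger than some (sample-dependent) $N_{0}$, with outer probability one. It thus suffices to show $|val(\mathcal{D}_n) - val(\mathcal{D})| \overset{\text{a.s.*}}{\rightarrow} 0$, where
\[
D(\lambda) := \sup_{L \in \mathcal{L}} \left\{ P_g \phi_0 L - \sum_{j=1}^l \lambda_j (P_g \phi_j L - \mu_j) \right\}
\]
and $D_n(\lambda)$ is defined analogously with $P_g$ replaced by $P_n$, so that $val(\mathcal{D}) = \inf_\lambda D(\lambda)$ and $val(\mathcal{D}_n) = \inf_\lambda D_n(\lambda)$.

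The first key step is a Slater-based localization showing that the infima of $D$ and of $D_n$ (for large $n$, a.s.$^*$) are attained, or approached to within any given tolerance, on a common compact set $\Lambda$. For $D$, evaluating the inner supremum at the Slater point $L_0$ of Assumption \ref{interior_point} gives
\[
D(\lambda) \ge P_g \phi_0 L_0 + \sum_{j=1}^m \lambda_j (\mu_j - P_g \phi_j L_0),
\]
and since $\mu_j - P_g \phi_j L_0 > 0$ for $j = 1,\ldots,m$, this combined with Assumption \ref{finite_opt} bounds the inequality multipliers at any near-optimal $\lambda$. The interior-point portion of Assumption \ref{interior_point} yields, for each equality index $k \in \{m+1,\ldots,l\}$, test functions $L_k^{\pm} \in \mathcal{L}$ whose population moments are perturbed by $\pm\delta$ in the $k$-th coordinate only; plugging each into $\sup_L$ produces upper and lower bounds on $\lambda_k$, giving a bounded $\Lambda$. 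For $D_n$, the same strict inequalities and perturbations survive for all $n$ large enough via the Glivenko-Cantelli property of Assumption \ref{GC_class} applied along the finitely many fixed anchor functions $L_0$ and $L_k^{\pm}$, so that the SAA infimum too is achieved on (a mild enlargement of) $\Lambda$ eventually a.s.$^*$.

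With $\lambda$ localized to a compact $\Lambda$, the remaining step is uniform convergence: for any $\lambda \in \Lambda$,
\[
|D_n(\lambda) - D(\lambda)| \le \sup_{L \in \mathcal{L}} \Big| (P_n - P_g)\Big(\phi_0 L - \sum_{j=1}^l \lambda_j \phi_j L\Big)\Big| \le ||P_n - P_g||_{\mathcal{F}_0} + \sum_{j=1}^l |\lambda_j|\, ||P_n - P_g||_{\mathcal{F}_j},
\]
and Assumption \ref{GC_class} makes each $||P_n - P_g||_{\mathcal{F}_j} \overset{\text{a.s.*}}{\rightarrow} 0$, so that the right-hand side tends to $0$ uniformly over $\lambda \in \Lambda$. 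Combining with the elementary inequality $|\inf_\Lambda D_n - \inf_\Lambda D| \le \sup_\Lambda |D_n - D|$ then yields the theorem. The main obstacle, in my view, is the SAA-side localization: because the equality constraints are not satisfied exactly by $L_0$ or by the $L_k^{\pm}$ under $P_n$, a careful argument along these finitely many anchor functions, possibly together with a slight enlargement of $\Lambda$, is needed to keep the Slater-type lower bound on $D_n(\lambda)$ intact uniformly for large $n$, while also dealing with the outer-measurability subtleties inherited from taking a supremum over all of $\mathcal{L}$.
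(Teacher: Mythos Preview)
Your proposal is correct and follows the same overall architecture as the paper: pass to the duals via Theorems \ref{strong_duality} and \ref{strong_duality_SAA}, localize the Lagrange multiplier to a compact set using Slater-type anchor functions, and conclude by the uniform Glivenko--Cantelli bound on $\sup_{\lambda\in\Lambda}|D_n(\lambda)-D(\lambda)|$. The main technical difference is how the localization is carried out. The paper packages it into a standalone quantitative lemma (Lemma \ref{deviation_optimal_value}): rather than your $2(l-m)$ single-coordinate perturbations $L_k^{\pm}$, it builds $2^{l-m}$ anchors, one per orthant, as convex combinations $(1-\delta_2)L_0+\delta_2 L_{(j_1,\ldots,j_{l-m})}$, so that each anchor simultaneously has strict inequality slack \emph{and} the prescribed sign in every equality coordinate. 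This yields the clean linear lower bound $D(\lambda)\ge \eta_2+\eta_1|\lambda|$, from which the SAA localization $D_n(\lambda)\ge (\eta_2-\varepsilon_n)+(\eta_1-\varepsilon_n)|\lambda|$ follows immediately once $\varepsilon_n:=\sum_j\|P_n-P_g\|_{\mathcal F_j}<\eta_1$; this is exactly the circularity you flag as ``the main obstacle,'' and the orthant anchors dissolve it in one stroke. Your coordinate-wise anchors can be pushed through as well (bound the inequality multipliers first, then each equality multiplier, and close the loop for small $\varepsilon_n$), but it is more bookkeeping, and more importantly it does not directly produce the explicit inequality $|val(\mathcal D_n)-val(\mathcal D)|\le \|P_n-P_g\|_{\mathcal F_0}+M\sum_{j}\|P_n-P_g\|_{\mathcal F_j}$ that the paper's lemma gives and then reuses verbatim in the proof of Theorem \ref{convergence_rate}.
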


% \subsection{Convergence Rate}\label{sec:canonical}

Establishing canonical convergence rates requires an additional maximal inequality holds for $G_{n}$, which helps bound the probability $\mathbb{P}^{\ast}(|\sqrt{n}(val(\mathcal{P}_{n})-val(\mathcal{P}))|\geq K)$.
% We prove $val(\mathcal{P}_{n})-val(\mathcal{P})$ has the canonical $\sqrt{n}$ convergence rate under an additional assumption.  We assume the following maximal inequality holds for $G_{n}$.

\begin{assumption}
\label{maximal_inequality}$\exists$ a constant $C>0$ s.t. $\mathbb{E}^{\ast}||G_{n}||_{\mathcal{F}_{j}}\leq C,\forall j=0,\ldots,l,\forall n\in\mathbb{N}.$
\end{assumption}

% The maximal inequality helps to bound the probability $\mathbb{P}^{\ast}(|\sqrt{n}(val(\mathcal{P}_{n})-val(\mathcal{P}))|\geq K)$ for large $K$, which gives the $\sqrt{n}$ convergence rate:

\begin{theorem}[Convergence rate]
\label{convergence_rate}Suppose Assumptions \ref{integrability_condition}-\ref{maximal_inequality} hold. Then we have $\sqrt{n}(val(\mathcal{P}_{n})-val(\mathcal{P}))=O_{\mathbb{P}^{\ast}}(1)$, i.e., $\forall\varepsilon>0$, $\exists~K>0$ s.t.%
\[
\limsup_{n\rightarrow\infty}\mathbb{P}^{\ast}(|\sqrt{n}(val(\mathcal{P}_{n})-val(\mathcal{P}))|\geq K)\leq\varepsilon.
\]

\end{theorem}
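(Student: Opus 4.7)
My plan is to translate everything to the Lagrangian dual via Theorems~\ref{strong_duality} and \ref{strong_duality_SAA} and then control the deviations on the dual side with the maximal inequality of Assumption~\ref{maximal_inequality}. Set
\[
h_{\lambda}(x,L) := \phi_{0}(x)L(x) - \sum_{j=1}^{l}\lambda_{j}\bigl(\phi_{j}(x)L(x)-\mu_{j}\bigr),
\]
so that $val(\mathcal{D}) = \inf_{\lambda}\sup_{L}P_{g}h_{\lambda}(\cdot,L)$ and $val(\mathcal{D}_{n}) = \inf_{\lambda}\sup_{L}P_{n}h_{\lambda}(\cdot,L)$. Because $\mathbb{P}_{\ast}(val(\mathcal{P}_{n})=val(\mathcal{D}_{n}))\to 1$ by Theorem~\ref{strong_duality_SAA}, it suffices to prove $\sqrt{n}(val(\mathcal{D}_{n})-val(\mathcal{D}))=O_{\mathbb{P}^{\ast}}(1)$; the disagreement event contributes a vanishing additive term to any tail bound. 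Since $P_{n}-P_{g}$ annihilates constants, the basic inequality
\[
\sup_{L}\bigl|(P_{n}-P_{g})h_{\lambda}(\cdot,L)\bigr| \;\le\; n^{-1/2}\Bigl(\|G_{n}\|_{\mathcal{F}_{0}}+\sum_{j=1}^{l}|\lambda_{j}|\,\|G_{n}\|_{\mathcal{F}_{j}}\Bigr)
\]
holds for every $\lambda$, and will drive both directions.

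For the upper direction, I will fix an optimal multiplier $\lambda^{\ast}$ of $\mathcal{D}$, whose existence is furnished by the Slater-type Assumption~\ref{interior_point} together with Theorem~\ref{strong_duality}. Plugging $\lambda^{\ast}$ into the infimum defining $val(\mathcal{D}_{n})$ and splitting $P_{n}=P_{g}+(P_{n}-P_{g})$ inside the $\sup_{L}$ yields
\[
val(\mathcal{D}_{n})-val(\mathcal{D}) \;\le\; n^{-1/2}\Bigl(\|G_{n}\|_{\mathcal{F}_{0}}+\sum_{j=1}^{l}|\lambda_{j}^{\ast}|\,\|G_{n}\|_{\mathcal{F}_{j}}\Bigr),
\]
and Assumption~\ref{maximal_inequality} combined with Markov's inequality makes the right-hand side $O_{\mathbb{P}^{\ast}}(n^{-1/2})$.

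The lower direction is symmetric once I can control near-optimal SAA multipliers. Picking an $n^{-1/2}$-optimal $\lambda_{n}^{\ast}$ for $\mathcal{D}_{n}$, the same manipulation yields
\[
val(\mathcal{D})-val(\mathcal{D}_{n}) \;\le\; n^{-1/2}\Bigl(\|G_{n}\|_{\mathcal{F}_{0}}+\sum_{j=1}^{l}|\lambda_{n,j}^{\ast}|\,\|G_{n}\|_{\mathcal{F}_{j}}\Bigr)+n^{-1/2},
\]
so everything reduces to the tightness of $\|\lambda_{n}^{\ast}\|$ in outer probability. I will establish this by transferring the Slater content of Assumption~\ref{interior_point} from $P_{g}$ to $P_{n}$. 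At the Slater point $L_{0}$, the inequality slackness $\delta_{j}=\mu_{j}-P_{g}\phi_{j}L_{0}>0$ for $j\le m$ is preserved by the pointwise law of large numbers implied by Assumption~\ref{GC_class}, so with outer probability tending to $1$ one retains $\mu_{j}-P_{n}\phi_{j}L_{0}\ge\delta_{j}/2$. For the equality constraints I will invoke Carath\'{e}odory to pick finitely many $L_{1},\dots,L_{K}\in\mathcal{L}$ whose $P_{g}$ equality-moment vectors place $(\mu_{m+1},\dots,\mu_{l})$ in the interior of their convex hull, then use GC on this finite collection together with the convexity of $\mathcal{L}$ from Assumption~\ref{convex_feasible_set} to retain interior enclosure under $P_{n}$ eventually. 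The standard Slater-to-bounded-multiplier estimate on the resulting uniform sample-Slater margin then produces a deterministic $M$ with $\mathbb{P}^{\ast}(\|\lambda_{n}^{\ast}\|\le M)\to 1$, closing the argument by Markov's inequality.

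The main obstacle is precisely this tightness of $\lambda_{n}^{\ast}$. For inequality constraints the nonnegativity $\lambda_{n,j}^{\ast}\ge 0$ coupled with the preserved slackness gives a clean bound, but for equality constraints there is no sign to exploit and one must quantitatively carry the interior-point hypothesis into the empirical problem using only the Glivenko--Cantelli control in Assumption~\ref{GC_class}, making the Carath\'{e}odory-plus-convexity construction unavoidable. Once the tightness is in place, the rest is a routine two-sided sandwich against empirical-process norms controlled by Assumption~\ref{maximal_inequality}.
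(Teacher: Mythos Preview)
Your proposal is correct and follows essentially the same route as the paper: pass to the duals via strong duality, use the Slater content of Assumption~\ref{interior_point} to confine the relevant Lagrange multipliers to a bounded set, and then bound the dual discrepancy by the empirical-process norms $\|G_n\|_{\mathcal{F}_j}$ controlled through Assumption~\ref{maximal_inequality} and Markov's inequality. The only organizational difference is that the paper packages the two-sided comparison and the multiplier bound into a single deterministic lemma (Lemma~\ref{deviation_optimal_value}) applied on an event $A_n$, whereas you run the upper and lower directions separately with a fixed population $\lambda^{\ast}$ and a near-optimal empirical $\lambda_n^{\ast}$; one small caveat is that the existence of a dual minimizer $\lambda^{\ast}$ is not part of Theorem~\ref{strong_duality} itself but follows from the coercivity of the dual objective implied by the Slater condition (as the paper makes explicit in the proof of Theorem~\ref{consistency}).
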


 % and leads to Theorem \ref{consistency} and Theorem \ref{convergence_rate}  via the strong duality and triangular inequality With a focus on the optimal values and the need to handle the additional moment constraints via strong duality, 

% WHEN DISCUSSING THE ABOVE THEOREMS, ADD DISCUSSION ON WHY STRONG DUALITY IS NEEDED. FOR THIS, WE CAN DISCUSS HOW NOT USING DUALITY CAUSES CHALLENGES. A CONCRETE DISCUSSION ON THIS POINT IS IMPORTANT BUT CURRENTLY MISSING IN THE PAPER. 

% ZHENYUAN: REVISED THE FOLLOWING PARAGRAPH.

%In deriving Theorems \ref{consistency} and \ref{convergence_rate}, we leverage tools from empirical processes to bound the uniform discrepancy between expectations and their sample averages, and strong duality that translates it into the error $val(\mathcal{P}_{n})-val(\mathcal{P})$. In our analyses, shape constraints are critical in reducing the complexity of the feasible decision space and rendering Assumption \ref{GC_class} and \ref{maximal_inequality} verifiable. 

In deriving Theorems \ref{consistency} and \ref{convergence_rate}, we leverage tools from empirical processes to bound the uniform discrepancy between expectations and their sample averages over a class of functions signified by the likelihood ratios. This class of functions are defined by the feasible region specified by both the moment and shape constraints. However, an issue arises that the moment constraints in ($\mathcal{P}$) and ($\mathcal{P}_n$) may not be the same due to the discretization in ($\mathcal{P}_n$), and in fact the moment constraints in ($\mathcal{P}_n$) can even change with the sample size $n$. Thus, a likelihood ratio that satisfies the moment constraints in ($\mathcal{P}$) may not satisfy the ones in ($\mathcal{P}_n$) and vice versa. To this end, strong duality helps move the moment constraints to the objective function and thus unifies the feasible region to be $L\in\mathcal{L}$ in both ($\mathcal{D}$) and ($\mathcal{D}_n$), which makes the empirical process theory applicable to our problem. With this remedy, the shape constraints are instrumental in reducing the complexity of the feasible decision space and rendering Assumptions \ref{GC_class} and \ref{maximal_inequality} verifiable. 

In the next section, we specialize our general results obtained in this section to various shape constraints. In particular, we demonstrate how we are able to handle objectives and moment constraints beyond the existing Choquet-based technique, which is a key implication in this paper.

\section{Applications to Various Shape Constraints\label{sec:example}}

% In this section, we introduce several shape constraints widely-used in the literature and discuss whether they can be tractably solved by importance-weighted SAA and whether 
We apply Theorems \ref{consistency} and \ref{convergence_rate} to the shape constraints in Table \ref{table_conditions}, and also discuss the limitations of our approach to other remaining constraints in Table \ref{checkbox}.
% with the above statistical guarantees. 
% In the next section, we will apply to several common shape constraints and provide explicit settings where the assumptions needed in these theorems can be verified.
First of all, note that our shape constraints are imposed on the unknown density $f\in\mathcal{F}$ but Assumptions \ref{GC_class} and \ref{maximal_inequality} are imposed on the function classes $\mathcal{F}_{j},j=0,\ldots,l$. So we need a lemma that can verify Assumptions \ref{GC_class} and \ref{maximal_inequality} by the conditions on $\mathcal{F}$. We first introduce some notation (from \cite{van1996weak} Section 2.1.1). Let $(\mathcal{F},||\cdot||_p)$ be a normed function space, where $||\cdot||_p$ is the $L_{p}$-norm, i.e., $||f||_{p}=(P_{g}|f^{p}|)^{1/p}$. For two functions $l\leq u$ (not necessarily in $\mathcal{F}$), the bracket $[l,u]$ is the class of functions $f$ satisfying $l\leq f\leq u$. The bracket is called an $\varepsilon$-bracket if $||l||_p<\infty,||u||_p<\infty$ and $||u-l||_p<\varepsilon$. We define the bracketing number $N_{[~]}(\varepsilon,\mathcal{F},||\cdot||_p)$ as the minimum number of $\varepsilon$-brackets to cover $\mathcal{F}$. For $\gamma>0$, we define $\mathcal{F}^{\gamma}=\{f^{\gamma}(x):f\in\mathcal{F}\}$.

\begin{lemma}
\label{general_verification}Let $F(x)$ be an envelope of the function class $\mathcal{F}$ with $||F||_{\infty}:=\sup_{x\in\mathcal{X}}F(x)<\infty$. If for each $\phi_{j}(x),j=0,\ldots,l$, either (\ref{compact_applicable1}) or (\ref{unbounded_applicable1}) holds:%
\begin{equation}
\exists~\delta>0\text{ s.t. }N_{[~]}(\varepsilon,\mathcal{F},||\cdot||_{1+1/\delta})<\infty ~\forall\varepsilon>0,\text{and }||\phi_{j}/g||_{1+\delta}<\infty, \label{compact_applicable1}%
\end{equation}%
\begin{equation}
\exists~\gamma\in(0,1),\delta>0,M>0\text{ s.t. }\left\{
\begin{array}[c]{l}%
N_{[~]}(\varepsilon,\mathcal{F}^{\gamma},||\cdot||_{1+1/\delta})<\infty,\forall\varepsilon>0\\
F^{1-\gamma}(x)\leq Mg(x)~\forall x\in\mathcal{X},\text{and }||\phi_{j}||_{1+\delta}<\infty
\end{array}
\right.  , \label{unbounded_applicable1}%
\end{equation}
then Assumption \ref{GC_class} holds. If for each $\phi_{j}(x),j=0,\ldots,l$, either (\ref{compact_applicable2}) or (\ref{unbounded_applicable2}) holds:%
\begin{equation}
\exists~\delta>0\text{ s.t. }\int_{0}^{x}\sqrt{\log N_{[~]}(\varepsilon,\mathcal{F},||\cdot||_{2(2+\delta)/\delta})}d\varepsilon<\infty~\forall x\in\mathbb{R}_{+},\text{and }||\phi_{j}/g||_{2+\delta}<\infty,
\label{compact_applicable2}%
\end{equation}
\begin{equation}
\exists~\gamma\in(0,1),\delta>0,M>0\text{ s.t. }\left\{
\begin{array}[c]{l}%
\int_{0}^{x}\sqrt{\log N_{[~]}(\varepsilon,\mathcal{F}^{\gamma},||\cdot||_{2(2+\delta)/\delta})}d\varepsilon<\infty~\forall x\in\mathbb{R}_{+}\\
F^{1-\gamma}(x)\leq Mg(x)~\forall x\in\mathcal{X},\text{and }||\phi_{j}||_{2+\delta}<\infty
\end{array}
\right.  , \label{unbounded_applicable2}%
\end{equation}
then Assumption \ref{maximal_inequality} holds.
\end{lemma}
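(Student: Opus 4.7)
My plan is to reduce verification of Assumptions \ref{GC_class} and \ref{maximal_inequality} to two standard bracketing results from empirical process theory (see \cite{van1996weak}): the bracketing Glivenko--Cantelli theorem (finite $L_1(P_g)$-bracketing numbers plus an $L_1(P_g)$-integrable envelope imply the $P_g$-GC property), and the bracketing maximal inequality ($\mathbb{E}^{\ast}\|G_n\|_{\mathcal{F}_j}$ is bounded uniformly in $n$ by the entropy integral $\int_0^{\|F_j\|_{L_2(P_g)}}\sqrt{\log N_{[\,]}(\varepsilon,\mathcal{F}_j,L_2(P_g))}\,d\varepsilon$). The core task is therefore to translate the bracketing covers of $\mathcal{F}$ or $\mathcal{F}^{\gamma}$ assumed in (\ref{compact_applicable1})--(\ref{unbounded_applicable2}) into bracketing covers of $\mathcal{F}_j=\{\phi_j f/g:f\in\mathcal{F}\}$ in the correct $P_g$-norm, and to verify envelope integrability for $F_j=|\phi_j|F/g$.

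For conditions (\ref{compact_applicable1}) and (\ref{compact_applicable2}), from any bracket $l\le f\le u$ on $\mathcal{F}$ I split $\phi_j=\phi_j^+-\phi_j^-$ and form $[\phi_j^+l/g-\phi_j^-u/g,\;\phi_j^+u/g-\phi_j^-l/g]$ as a bracket on $\mathcal{F}_j$ with length $|\phi_j|(u-l)/g$. H\"older's inequality then gives
\[
\norm{|\phi_j|(u-l)/g}_{L_r(P_g)}\le \norm{\phi_j/g}_{L_s(P_g)}\norm{u-l}_{L_t(P_g)}
\]
with $(s,t)=(1+\delta,1+1/\delta)$ when $r=1$ and $(s,t)=(2+\delta,2(2+\delta)/\delta)$ when $r=2$, so bracketing finiteness (for GC) and the entropy integral (for the maximal inequality) transfer from $\mathcal{F}$ to $\mathcal{F}_j$.

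For (\ref{unbounded_applicable1}) and (\ref{unbounded_applicable2}), the envelope of $\mathcal{F}$ may diverge in the required $P_g$-norm, so I work instead with $\mathcal{F}^{\gamma}$, whose envelope $F^{\gamma}$ is bounded by $\|F\|_\infty^{\gamma}$. Given a bracket $0\le l\le f^{\gamma}\le u\le F^{\gamma}$ on $\mathcal{F}^{\gamma}$, the induced bracket for $f/g$ is $[l^{1/\gamma}/g,\;u^{1/\gamma}/g]$; the mean-value inequality $u^{1/\gamma}-l^{1/\gamma}\le \gamma^{-1}u^{(1-\gamma)/\gamma}(u-l)$, combined with $u^{(1-\gamma)/\gamma}\le F^{1-\gamma}\le Mg$, shrinks the length of the resulting bracket on $\mathcal{F}_j$ to at most $(M/\gamma)|\phi_j|(u-l)$. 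The same H\"older pairings as above then apply, but now with $\norm{\phi_j}_{L_s(P_g)}$ in place of $\norm{\phi_j/g}_{L_s(P_g)}$ (the factor $g$ having been absorbed through the compatibility condition), again delivering the required $L_1(P_g)$ and $L_2(P_g)$ bracketing control.

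To finish, I check the envelope $F_j=|\phi_j|F/g$: in the compact case, $F\le\|F\|_\infty$ reduces integrability of $F_j$ to that of $|\phi_j|/g$ under $P_g$, which follows from $\norm{\phi_j/g}_{1+\delta}<\infty$ or $\norm{\phi_j/g}_{2+\delta}<\infty$ since $P_g$ is a probability measure; in the unbounded case, $F/g\le MF^{\gamma}\le M\|F\|_\infty^{\gamma}$ dominates $F_j$ by a multiple of $|\phi_j|$, which lies in $L_{1+\delta}(P_g)$ or $L_{2+\delta}(P_g)$ as required. Feeding these envelope bounds together with the translated bracketing bounds into the two empirical process theorems concludes the proof. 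The hard part is the unbounded case: bracketing $\mathcal{F}$ directly can fail when the tail of the envelope overwhelms $g$, and the interplay of $F^{1-\gamma}\le Mg$ with the mean-value inequality above is the key device that exchanges brackets on $\mathcal{F}^{\gamma}$ for brackets on $\mathcal{F}_j$ at a cost H\"older can absorb.
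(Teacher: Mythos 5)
Your proposal is correct and follows essentially the same route as the paper's proof: transferring brackets from $\mathcal{F}$ (or $\mathcal{F}^{\gamma}$) to $\mathcal{F}_j$ via the $\phi_j^{\pm}$ splitting, controlling bracket lengths by H\"older with the stated conjugate exponents, using the mean-value bound $u^{1/\gamma}-l^{1/\gamma}\le\gamma^{-1}F^{1-\gamma}(u-l)$ together with $F^{1-\gamma}\le Mg$ in the unbounded case, and then invoking the bracketing Glivenko--Cantelli theorem and the bracketing maximal inequality. The envelope checks also match the paper's.
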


% , which can be viewed as importance sampling
Lemma \ref{general_verification} provides two types of conditions to verify Assumptions \ref{GC_class} and \ref{maximal_inequality}. One is based on the moment conditions on $\phi_{j}/g$, i.e., (\ref{compact_applicable1}) and (\ref{compact_applicable2}). Recall that in our formulation one of the functions $\phi_{j}$ must be the indicator function $I(x\in\mathcal{X})$ to represent the total mass constraint on $f$. Therefore, (\ref{compact_applicable1}) and (\ref{compact_applicable2}) can hold only when $\mathcal{X}$ is bounded. The other type, i.e., (\ref{unbounded_applicable1}) and (\ref{unbounded_applicable2}), is based on the moment conditions on $\phi_{j}$ but with an additional condition $F^{1-\gamma}(x)\leq Mg(x)~\forall x\in\mathcal{X}$. This type of condition can be applied when $\mathcal{X}$ is unbounded and the additional condition essentially says the sampling distribution $g$ should be heavier-tailed than any $f\in\mathcal{F}$. A common situation (also used in the following examples) of the second type is that the density $f\in\mathcal{F}$ is known to be lighter-tailed than some density $g_{0}$, i.e., $f^{1-\gamma}\leq Mg_{0}$ for any $f\in\mathcal{F}$ and some constant $M>0$. Then the envelope $F$ can be chosen as $F=(Mg_{0})^{1/(1-\gamma)}$ with the sampling distribution $g=g_{0}$. In data-driven settings, this can be conducted by estimating the tail index from the data and choosing $g_{0}$ that is heavier-tailed than the true distribution with high confidence.

% then construct the ambiguity set $\mathcal{F}$ with the envelope $F=(Mg_{0})^{1/(1-\gamma)}$ for some and discuss whether the corresponding distributional optimization problems allow a finite-dimensionally reducible importance-weighted SAA with statistical guarantees. 

Now we are ready to study the six shape constraints in Table \ref{checkbox}. In particular, all finite-dimensional reductions, if available, will be in the following form:
\begin{align*}
(\mathcal{P}^{\prime}_{n}):\sup_{L(X_i),i=1,\ldots,n}\text{ }  &  \frac{1}{n}\sum_{i=1}^{n}\phi_{0}(X_{i})L(X_{i})\\
\text{\rm subject to }  &  \frac{1}{n}\sum_{i=1}^{n}\phi_{j}(X_{i})L(X_{i})\leq\mu_{j},j=1,\ldots,m\\
&  \frac{1}{n}\sum_{i=1}^{n}\phi_{j}(X_{i})L(X_{i})=\mu_{j},j=m+1,\ldots,l\\
&  L(X_i),i=1,\ldots,n\text{ satisfy the discrete shape constraint.}
\end{align*}
To avoid repetition, we will only specify the ``discrete shape constraint" in the following subsections. As discussed above, we will distinguish between bounded domains and unbounded domains since they require different conditions.

\subsection{Compact Domain}\label{sec:compact}

We first consider three one-dimensional shapes: monotonicity, convexity and unimodality (a function $f$ is said to be unimodal about a mode ${c}$ if $f(x)$ is non-decreasing when $x\leq {c}$ and $f(x)$ is non-increasing when $x\geq {c}$). More precisely, letting $X_{(1)}\leq\cdots\leq X_{(n)}$ be the order statistics of $X_{1},\ldots,X_{n}$, we have the following:

\begin{corollary}[\textbf{One-dimensional shapes}]\label{1D_compact}Let $\mathcal{X}=[a,b]$, $M>0$ and the shape-constrained function class be one of the following:
\[
\left.
\begin{array}[c]{l}%
(\text{Monotonicity})\\
(\text{Convexity})\\
(\text{Unimodality})
\end{array}
\right.  \left.
\begin{array}[c]{l}%
\mathcal{F}=\{f:0\leq f(x)\leq M~\forall x\in\mathcal{X}\text{, }f\text{ is non-increasing on }\mathcal{X}\}\\
\mathcal{F}=\{f:0\leq f(x)\leq M~\forall x\in\mathcal{X}\text{, }f\text{ is convex on }\mathcal{X}\}\\
\mathcal{F}=\{f:0\leq f(x)\leq M~\forall x\in\mathcal{X}\text{, }f\text{ is unimodal about }c\text{ on }\mathcal{X}\}
\end{array}
\right.,
\]
where $c\in(a,b)$ is a fixed point. Suppose the sampling distribution $g$ satisfies $\inf_{x\in\mathcal{X}}g(x)>0$, the functions $\phi_{j}$ satisfy $\int_{\mathcal{X}}|\phi_{j}(x)|^{2+\delta}dx<\infty,j=0,\ldots,l$ for some $\delta>0$ and Assumption \ref{interior_point} holds. Then the IW-SAA problem $(\mathcal{P}_{n})$ with one of the above $\mathcal{F}$ is equivalent to the linear program $(\mathcal{P}^{\prime}_{n})$ with the corresponding discrete shape constraint:
\[
\left\{
\begin{array}[c]{l}
L(X_{(i+1)})g(X_{(i+1)})\leq L(X_{(i)})g(X_{(i)}),i=1,\ldots,n-1\\
0\leq L(X_{i})g(X_{i})\leq M,i=1,\ldots,n
\end{array}
\right.,
\]
\[
\left\{
\begin{array}[c]{l}
\frac{L(X_{(i+1)})g(X_{(i+1)})-L(X_{(i)})g(X_{(i)})}{X_{(i+1)}-X_{(i)}}\leq\frac{L(X_{(i+2)})g(X_{(i+2)})-L(X_{(i+1)})g(X_{(i+1)})}{X_{(i+2)}-X_{(i+1)}},i=0,\ldots,n-1\\
0\leq L(X_{i})g(X_{i})\leq M,i=0,1,\ldots,n+1
\end{array}
\right.,
\]
\[
\left\{
\begin{array}[c]{l}
L(X_{(i-1)})g(X_{(i-1)})\leq L(X_{(i)})g(X_{(i)}),\text{ if }X_{(i)}\leq c\\
L(X_{(i+1)})g(X_{(i+1)})\leq L(X_{(i)})g(X_{(i)}),\text{ if }X_{(i)}\geq c\\
0\leq L(X_{i})g(X_{i})\leq M,i=1,\ldots,n
\end{array}
\right.,
\]
i.e., $(\mathcal{P}_{n})$ is finite-dimensionally reducible. Additionally, $|val(\mathcal{P}_{n})-val(\mathcal{P})|\overset{\text{a.s.*}}{\rightarrow}0$ and $\sqrt{n}(val(\mathcal{P}_{n})-val(\mathcal{P}))=O_{\mathbb{P}^{\ast}}(1)$.
\end{corollary}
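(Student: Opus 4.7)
The plan splits into three tasks: (i) verify finite-dimensional reducibility of $(\mathcal{P}_n)$ for each of the three shapes by exhibiting an explicit interpolation from the $n$-vector of values back into $\mathcal{L}$; (ii) verify Assumptions \ref{integrability_condition}--\ref{finite_opt}, which amount to routine consequences of compactness of $\mathcal{X}$, boundedness of $f$ by $M$, and positivity of $\inf_{\mathcal{X}} g$; (iii) verify Assumptions \ref{GC_class} and \ref{maximal_inequality} via Lemma \ref{general_verification}, then invoke Theorems \ref{consistency} and \ref{convergence_rate}.

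For task (i), the monotone case is exactly the example worked out in the excerpt, using the piecewise-constant reconstruction $L(x)g(x) = L(X_{(i)})g(X_{(i)})$ on the bin containing $x$. For convexity, given discrete values whose consecutive ``slope'' inequalities hold, take $L(x)g(x)$ to be the piecewise-linear interpolant through the points $\bigl(X_{(i)},L(X_{(i)})g(X_{(i)})\bigr)$ (extended linearly, or by a convex extrapolation capped by $M$, at the two endpoints); the slope condition is precisely the statement that this piecewise-linear function is convex, and the bound $0\le L(X_i)g(X_i)\le M$ plus convexity gives $0\le Lg\le M$ on $[a,b]$. For unimodality with fixed mode $c$, split the order statistics at $c$ and apply the monotone reconstruction separately on each side; the resulting $f=Lg$ is unimodal about $c$ and lies in $[0,M]$. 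In each case the reconstructed $L$ achieves the same values at the sample points, so $(\mathcal{P}_n)$ and $(\mathcal{P}'_n)$ share optimal value.

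For task (ii), since $\inf_{\mathcal{X}} g = g_{\min} > 0$ and $Lg \le M$, we have $L \le M/g_{\min}$, hence $|\phi_j(X)L(X)|\le (M/g_{\min})|\phi_j(X)|$, which is $P_g$-integrable because $\int |\phi_j|^{2+\delta}<\infty$ on the bounded set $\mathcal{X}$ implies $\int|\phi_j|<\infty$; this gives Assumption \ref{integrability_condition} and Assumption \ref{finite_opt}. Convexity of $\mathcal{L}$ (Assumption \ref{convex_feasible_set}) is inherited from the evident convexity of each of the three shape classes, and Assumption \ref{interior_point} is hypothesized.

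For task (iii), I take envelope $F\equiv M$ (so $\|F\|_\infty<\infty$) and apply condition \eqref{compact_applicable1} for Assumption \ref{GC_class} and condition \eqref{compact_applicable2} for Assumption \ref{maximal_inequality}. The moment conditions $\|\phi_j/g\|_{1+\delta}<\infty$ and $\|\phi_j/g\|_{2+\delta}<\infty$ follow from $\|\phi_j/g\|_{p}\le g_{\min}^{-1}\|\phi_j\|_p$ and the $(2+\delta)$-integrability hypothesis. It remains to bound bracketing numbers of $\mathcal{F}$ under an $L_p(P_g)$-norm, and for this the key input is the classical estimate, valid for uniformly bounded monotone functions on a bounded interval, $\log N_{[\,]}(\varepsilon,\mathcal{F},\|\cdot\|_p)\lesssim 1/\varepsilon$ (e.g.\ \cite{van1996weak}, Theorem 2.7.5). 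The convex case follows from the analogous bound $\log N_{[\,]}(\varepsilon,\mathcal{F},\|\cdot\|_p)\lesssim \varepsilon^{-1/2}$ for uniformly bounded convex functions on a bounded interval, and the unimodal case reduces to combining two monotone classes on $[a,c]$ and $[c,b]$, whose bracketing numbers multiply. In each case $\log N_{[\,]}(\varepsilon,\mathcal{F},\|\cdot\|_p)$ is finite for every $\varepsilon>0$ and has integrable square root at $0$, so both \eqref{compact_applicable1} and \eqref{compact_applicable2} hold. Theorems \ref{consistency} and \ref{convergence_rate} then deliver the stated consistency and $O_{\mathbb{P}^\ast}(1/\sqrt n)$ rate.

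The main obstacle is the bracketing-entropy control in task (iii), particularly the convex case and the decomposition for unimodality; these are standard but less elementary than the monotone bound. The reducibility step, once the interpolation is written down, is purely combinatorial, and the integrability/Slater bookkeeping in task (ii) is routine given $g_{\min}>0$ and the imposed moment condition on $\phi_j$.
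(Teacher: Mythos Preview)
Your proposal is correct and follows the same overall architecture as the paper: explicit interpolations for finite-dimensional reducibility, routine verification of Assumptions \ref{integrability_condition}, \ref{convex_feasible_set}, \ref{finite_opt}, and then Lemma \ref{general_verification} plus Theorems \ref{consistency}--\ref{convergence_rate} for the statistical guarantees.

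The one genuine difference is how the bracketing numbers are controlled. The paper does not treat the three shapes separately: it observes once that every function in any of the three classes has total variation at most $2M$ (monotone trivially; a bounded convex function on $[a,b]$ is V-shaped, hence has variation $\le 2M$; unimodal likewise), and then invokes a \emph{single} entropy bound for bounded-variation classes (\cite{van2000asymptotic}, Example 19.11), yielding $\log N_{[\,]}(\varepsilon,\mathcal{F},\|\cdot\|_p)\lesssim \varepsilon^{-1}$ uniformly across all three cases. Your route handles each shape on its own---monotone via Theorem 2.7.5 of \cite{van1996weak}, unimodal by splitting at the mode and multiplying bracket counts, convex via the sharper $\varepsilon^{-1/2}$ convex-specific estimate. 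Both routes give finite entropy and integrable $\sqrt{\log N_{[\,]}}$ at zero, so the conclusions coincide. The paper's unified bounded-variation argument is shorter and avoids citing the convex entropy result; your case-by-case treatment extracts a tighter bound in the convex case, which is immaterial here but worth noting.
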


% \begin{corollary}
% [\textbf{Monotonicity}]\label{monotone_compact}Let $\mathcal{X}=[a,b]$, $M>0$ and the shape-constrained function class $\mathcal{F}=\{f:0\leq f(x)\leq M~\forall x\in\mathcal{X}\text{, }f\text{ is non-increasing on }\mathcal{X}\}$. Suppose the sampling distribution $g$ satisfies $\inf_{x\in\mathcal{X}}g(x)>0$, the functions $\phi_{j}$ satisfy $\int_{\mathcal{X}}|\phi_{j}(x)|^{2+\delta}dx<\infty,j=0,\ldots,l$ for some $\delta>0$ and Assumption \ref{interior_point} holds. Then the importance-weighted SAA problem $(\mathcal{P}_{n})$ is equivalent to the linear program $(\mathcal{P}^{\prime}_{n})$ with the following discrete shape constraint:
% \[
% \left.
% \begin{array}[c]{l}
% L(X_{(i+1)})g(X_{(i+1)})\leq L(X_{(i)})g(X_{(i)}),i=1,\ldots,n-1\\
% 0\leq L(X_{i})g(X_{i})\leq M,i=1,\ldots,n
% \end{array}
% \right..
% \]
% % \begin{align*}
% % \sup\text{ }  &  \frac{1}{n}\sum_{i=1}^{n}\phi_{0}(X_{i})L(X_{i})\\
% % \text{\rm subject to }  &  \frac{1}{n}\sum_{i=1}^{n}\phi_{j}(X_{i})L(X_{i})\leq\mu_{j},j=1,\ldots,m\\
% % &  \frac{1}{n}\sum_{i=1}^{n}\phi_{j}(X_{i})L(X_{i})=\mu_{j},j=m+1,\ldots,l\\
% % &  L(X_{(i+1)})g(X_{(i+1)})\leq L(X_{(i)})g(X_{(i)}),i=1,\ldots,n-1\\
% % &  0\leq L(X_{i})g(X_{i})\leq M,i=1,\ldots,n.
% % \end{align*}
% Additionally, $|val(\mathcal{P}_{n})-val(\mathcal{P})|\overset{\text{a.s.*}}{\rightarrow}0$ and $\sqrt{n}(val(\mathcal{P}_{n})-val(\mathcal{P}))=O_{\mathbb{P}^{\ast}}(1)$.
% \end{corollary} could be conservative because we would not like to see

Corollary \ref{1D_compact} concludes that under our assumptions, the three one-dimensional shapes elicit IW-SAAs that are finite-dimensionally reducible and exhibit consistency and $\sqrt{n}$-convergence. Corollary \ref{1D_compact}
can be also applied to the non-decreasing shape constraint and concave shape constraint with obvious modifications. For unimodality, the mode $c$ must be pre-specified. Otherwise the function spaces $\mathcal{F}$ and $\mathcal{L}$ are no longer convex, in which case Assumption \ref{convex_feasible_set} fails.

Note that the shape constraint $\mathcal{F}$ in Corollary \ref{1D_compact} contains an upper bound $M$. The impact of this bound can be divided into two cases. First is that the optimization problem without this boundedness constraint admits a bounded density as an optimal solution. In this case, selecting a large enough $M$ as the upper bound retains the optimal solution and optimal value. Another possibility is that either the problem without this boundedness constraint has no optimal solution or any optimal solution must be unbounded. In this situation, including an upper bound $M$ affects the optimal value. However, this situation itself indicates that the starting formulation can be problematic, as the worst-case scenario is achieved by an unbounded density that is typically unrealistic in practice. This discussion also carries over to the subsequent shape constraints that we study.
Next we discuss orthounimodality, which is a generalization of unimodality to the multidimensional case. A function $f$ on $\mathbb{R}^{d}$ is said to be orthounimodal about the mode $a=(a_1,\ldots,a_d)\in\mathbb{R}^{d}$, if for each $i=1,\ldots,d$ and any fixed $x_j\in\mathbb{R},j\neq i$, the function $x_i\mapsto f(x)$ is non-decreasing on $(-\infty,a_i]$ and non-increasing on $[a_i,\infty)$. Orthounimodality is recently studied in \cite{lam2021orthounimodal} as a multidimensional shape constraint well-suited to model tail distributional behaviors and thus is attractive for extreme event analysis. For two vectors $a,b\in\mathbb{R}^{d}$, $a\leq b$ and $a<b$ are interpreted in the component-wise sense and $[a,b]$ denotes the hyperrectangle $\{x\in\mathbb{R}^{d}:a\leq x\leq b\}$ (similar for $(a,b]$, $[a,b)$ and $(a,b)$). We show that orthounimodality entails a consistent IW-SAA. 
% (ZHENYUAN: NOW I DEFINE OU ON $\mathbb{R}^{d}$ NOT JUST ON $[a,\infty)$ AND DISCUSS THERE IS NO LOSS OF GENERALITY BELOW THIS COROLLARY.)
%For two vectors $a,b\in\mathbb{R}^{d}$, $a\leq b$ and $a<b$ are interpreted in the component-wise sense and $[a,b]$ denotes the hyperrectangle $\{x\in\mathbb{R}^{d}:a\leq x\leq b\}$ (similar for $(a,b]$, $[a,b)$ and $(a,b)$). A function $f$ is said to be orthounimodal about the mode $a\in\mathbb{R}^{d}$ on $[a,\infty)$, if $f(x)$ is non-increasing when any component of $x$ increases, i.e., $f(x)\geq f(x^{\prime})$ for any $a\leq x\leq x^{\prime}$. Orthounimodality is recently studied in \cite{lam2021orthounimodal} as a multidimensional shape constraint that is well-suited to model the tail behavior of densities and thus can be used for extreme event analysis. We show orthounimodality allows a finite-dimensionally reducible importance-weighted SAA with consistency.

\begin{corollary}
[\textbf{Orthounimodality}]\label{OU_compact}Let $\mathcal{X}=[a,b]\subset\mathbb{R}^{d}$ ($d\ge2$) be a hyperrectangle, $M>0$ and the shape-constrained function class $\mathcal{F}=\{f:0\leq f(x)\leq M~\forall x\in\mathcal{X}\text{, }f\text{ is orthounimodal about }a\text{ on }\mathcal{X}\}$. Suppose the sampling distribution $g$ satisfies $0<\inf_{x\in\mathcal{X}}g(x)\leq\sup_{x\in\mathcal{X}}g(x)<\infty$, the functions $\phi_{j}$ satisfy $\int_{\mathcal{X}}|\phi_{j}(x)|^{1+\delta}dx<\infty,j=0,\ldots,l$ for some $\delta>0$ and Assumption \ref{interior_point} holds. Then the IW-SAA problem $(\mathcal{P}_{n})$ is equivalent to the linear program $(\mathcal{P}^{\prime}_{n})$ with the following discrete shape constraint:
\[
\left.
\begin{array}[c]{l}
L(X_{i})g(X_{i})\leq L(X_{j})g(X_{j}),\text{ if }X_{i}\geq X_{j}\text{ component-wise}\\
0\leq L(X_{i})g(X_{i})\leq M,i=1,\ldots,n
\end{array}
\right.,
\]
% \begin{align*}
% \sup\text{ }  &  \frac{1}{n}\sum_{i=1}^{n}\phi_{0}(X_{i})L(X_{i})\\
% \text{\rm subject to }  &  \frac{1}{n}\sum_{i=1}^{n}\phi_{j}(X_{i})L(X_{i})\leq \mu_{j},j=1,\ldots,m\\
% &  \frac{1}{n}\sum_{i=1}^{n}\phi_{j}(X_{i})L(X_{i})=\mu_{j},j=m+1,\ldots,l\\
% &  L(X_{i})g(X_{i})\leq L(X_{j})g(X_{j}),\text{ if }X_{i}\geq X_{j}\text{ component-wise}\\
% &  0\leq L(X_{i})g(X_{i})\leq M,i=1,\ldots,n
% \end{align*}
i.e., $(\mathcal{P}_{n})$ is finite-dimensionally reducible. Additionally, $|val(\mathcal{P}_{n})-val(\mathcal{P})|\overset{\text{a.s.*}}{\rightarrow}0$.
\end{corollary}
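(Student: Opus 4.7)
The plan has two largely independent pieces: the finite-dimensional reduction of $(\mathcal{P}_n)$ to the stated linear program, and the almost-sure consistency of $val(\mathcal{P}_n)$. For the reduction, I would show that any vector $(v_1,\ldots,v_n)\in[0,M]^n$ satisfying $v_i\le v_j$ whenever $X_i\ge X_j$ componentwise extends to some $L\in\mathcal{L}$ with $L(X_i)g(X_i)=v_i$. A natural construction is $h(x):=\min\{v_i:X_i\le x\text{ componentwise}\}$, with the convention $h(x):=M$ when no sample lies below $x$, and then $L(x):=h(x)/g(x)$. Since orthounimodality about the lower corner $a$ coincides on $[a,b]$ with componentwise non-increasingness, I would verify in turn (i) $h(X_j)=v_j$, using that every $i$ with $X_i\le X_j$ satisfies $v_i\ge v_j$ by the discrete constraint while $X_j$ itself lies in the minimizing set; (ii) $h(y)\le h(x)$ whenever $x\le y$ componentwise, which is immediate from inclusion of the minimization sets; and (iii) $0\le h\le M$ by construction. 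Combined with the trivial direction that every $L\in\mathcal{L}$ supplies a feasible discrete vector, this identifies $val(\mathcal{P}_n)$ with the optimal value of the linear program $(\mathcal{P}'_n)$.

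For consistency I would apply Theorem \ref{consistency}. Assumptions \ref{integrability_condition}, \ref{convex_feasible_set}, and \ref{interior_point} are immediate or given, and $val(\mathcal{P})$ is finite since the objective is bounded by $M\int_{\mathcal{X}}|\phi_0|dx<\infty$. The one nontrivial step is Assumption \ref{GC_class}, which I would handle using condition (\ref{compact_applicable1}) of Lemma \ref{general_verification} with the constant envelope $F\equiv M$. The moment condition $\|\phi_j/g\|_{1+\delta'}<\infty$ for a suitable $\delta'>0$ follows at once from $\int_{\mathcal{X}}|\phi_j|^{1+\delta}dx<\infty$ together with $0<\inf_{\mathcal{X}}g\le\sup_{\mathcal{X}}g<\infty$ on the compact $\mathcal{X}$, after shrinking $\delta$ if necessary.

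The substantive step, and the main obstacle, is proving $N_{[\,]}(\varepsilon,\mathcal{F},\|\cdot\|_p)<\infty$ for every $\varepsilon>0$ and some $p>1$. For this I would partition $[a,b]$ into a uniform rectangular grid of mesh $\eta$ and, on each cell $R$ with lower corner $x_R^-$ and upper corner $x_R^+$, use componentwise monotonicity to sandwich any $f\in\mathcal{F}$ between the constants $f(x_R^+)$ and $f(x_R^-)$. Quantizing the admissible corner values into finitely many levels of width proportional to $\varepsilon$ and enumerating the resulting constant-on-cell brackets produces a finite collection of pairs $(l,u)$ such that every $f\in\mathcal{F}$ lies in some bracket, and a telescoping bound on $\sum_R|R|\,|f(x_R^-)-f(x_R^+)|^p$ exploiting the monotonicity controls $\|u-l\|_p$ by $\eta$ and the quantization width; choosing these as functions of $\varepsilon$ yields the required bracketing. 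I would not pursue the stronger entropy integral in (\ref{compact_applicable2}): for $d\ge 2$ the bracketing entropy of componentwise-monotone classes grows too rapidly for that integral to converge, which explains the blank entry in Table \ref{checkbox} for the canonical rate under orthounimodality. With Assumption \ref{GC_class} established, Theorem \ref{consistency} then delivers $|val(\mathcal{P}_n)-val(\mathcal{P})|\overset{\text{a.s.*}}{\rightarrow}0$.
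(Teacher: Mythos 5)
Your proposal is correct and follows the paper's proof essentially verbatim: the extension $h(x)=\min\{v_i:X_i\le x\}$ (set to $M$ where no sample lies below $x$) is exactly the function the paper constructs, and consistency is obtained the same way, via condition (\ref{compact_applicable1}) of Lemma \ref{general_verification} with envelope $M$ and then Theorem \ref{consistency}. The only divergence is that the paper simply cites Corollary 1.3 of \cite{gao2007entropy} for the finiteness of $N_{[~]}(\varepsilon,\mathcal{F},\|\cdot\|_{p})$, whereas you reprove it with a grid-and-quantization construction; that construction is sound, but the telescoping step should be organized along diagonal chains of cells so that $\sum_R\bigl(f(x_R^-)-f(x_R^+)\bigr)=O(\eta^{-(d-1)})$ while each $|R|=\eta^{d}$ — the sum over all cells does not telescope directly when $d\ge2$.
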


In Corollary \ref{OU_compact}, for ease of exposition, we assume the mode $a$ is a vertex of the domain $\mathcal{X}=[a,b]$. When $a$ is in a general position, the discrete shape constraint can be modified accordingly and the same conclusion holds. This observation also carries over to the subsequent cases with unbounded domain. Note that, although $val(\mathcal{P}_{n})$ is consistent, unlike in Corollary \ref{1D_compact}, orthounimodality may not have a $\sqrt{n}$ convergence rate because Assumption \ref{maximal_inequality} may not hold. In fact, the lower bound in Theorem 1.1 of \cite{gao2007entropy} implies that if $\inf_{x\in\mathcal{X}}g(x)>0$, then for any fixed $p\geq1$, $\log N_{[~]}(\varepsilon,\mathcal{F},||\cdot||_{p})\geq C\varepsilon^{-2}$ for some constant $C>0$ when $\varepsilon\leq1$. Therefore, the sufficient condition (\ref{compact_applicable2}) for Assumption \ref{maximal_inequality} does not hold. Even if one would like to directly verify Assumption \ref{maximal_inequality}, one may still need to show the bracketing integral condition $\int_{0}^{x}\sqrt{\log N_{[~]}(\varepsilon,\mathcal{F}_{j},||\cdot||_{2})}d\varepsilon<\infty$ for some $x$ and all $j=0,\ldots,l$ (or the finiteness of the uniform entropy integral; see, e.g., \cite{van2000asymptotic} Chapter 19) since these conditions may give the easiest way to prove the maximal inequality (see, e.g., the results in \cite{van2000asymptotic} Chapter 19 starting from Lemma 19.34). However, recall that one of $\phi_{j}$, say $\phi_{1}$, is the indicator function $I(x\in\mathcal{X})$ which denotes the total mass condition. Therefore, when $\sup_{x\in\mathcal{X}}g(x)<\infty$, the lower bound in Theorem 1.1 of \cite{gao2007entropy} also implies $\log N_{[~]}(\varepsilon,\mathcal{F}_{1},||\cdot||_{2})\geq C\varepsilon^{-2}$, which makes the bracketing integral condition fail for the function class $\mathcal{F}_{1}$ associated with $\phi_{1}$. The uniform entropy integral condition\ is also likely not to hold because the lower bound in Theorem 1.1 of \cite{gao2007entropy} also applies to the covering number.

\subsection{Unbounded Domain} \label{sec:unbounded}

For unbounded domains, as we discussed previously following Lemma \ref{general_verification}, (\ref{unbounded_applicable1}) and (\ref{unbounded_applicable2}) should be used to justify Assumptions \ref{GC_class} and \ref{maximal_inequality}. Conditions (\ref{unbounded_applicable1}) and (\ref{unbounded_applicable2}) are naturally applied to the case where the densities $f\in\mathcal{F}$ are known to be lighter-tailed than some density $g_{0}$, i.e., $f^{1-\gamma}\leq Mg_{0}$ for any $f\in\mathcal{F}$ because the sampling density $g\equiv g_{0}$ will automatically satisfy the requirements. We will focus on this case in the following.

\begin{corollary}
[\textbf{Monotonicity with unbounded domain}]\label{monotone_unbounded}Let $\mathcal{X}=[a,\infty)$ and the shape-constrained function class $\mathcal{F}=\{f:0\leq f(x)\leq(Mg_{0}(x))^{1/(1-\gamma)}~\forall x\in\mathcal{X}\text{, }f\text{ is non-increasing on }\mathcal{X}\}$, where $M>0$, $\gamma\in(0,1)$ and $g_{0}$ is a non-increasing positive probability density on $\mathcal{X}$ with $g_{0}(a)<\infty$. Let $g_{0}$ be the sampling distribution. Suppose the functions $\phi_{j}$ satisfy $||\phi_{j}||_{2+\delta}<\infty,j=0,\ldots,l$ for some $\delta>0$ and Assumption \ref{interior_point} holds. Then the IW-SAA problem $(\mathcal{P}_{n})$ is equivalent to the linear program $(\mathcal{P}^{\prime}_{n})$ with the following discrete shape constraint:
\[
\left.
\begin{array}[c]{l}
L(X_{(i+1)})g_{0}(X_{(i+1)})\leq L(X_{(i)})g_{0}(X_{(i)}),i=1,\ldots,n-1\\
0\leq L(X_{i})g_{0}(X_{i})\leq(Mg_{0}(X_{i}))^{1/(1-\gamma)},i=1,\ldots,n
\end{array}
\right.,
\]
% \begin{align*}
% \sup\text{ }  &  \frac{1}{n}\sum_{i=1}^{n}\phi_{0}(X_{i})L(X_{i})\\
% \text{subject to }  &  \frac{1}{n}\sum_{i=1}^{n}\phi_{j}(X_{i})L(X_{i})\leq \mu_{j},j=1,\ldots,m\\
% &  \frac{1}{n}\sum_{i=1}^{n}\phi_{j}(X_{i})L(X_{i})=\mu_{j},j=m+1,\ldots,l\\
% &  L(X_{(i+1)})g_{0}(X_{(i+1)})\leq L(X_{(i)})g_{0}(X_{(i)}),i=1,\ldots,n-1\\
% &  0\leq L(X_{i})g_{0}(X_{i})\leq(Mg_{0}(X_{i}))^{1/(1-\gamma)},i=1,\ldots,n
% \end{align*}
i.e., $(\mathcal{P}_{n})$ is finite-dimensionally reducible. Additionally, $|val(\mathcal{P}_{n})-val(\mathcal{P})|\overset{\text{a.s.*}}{\rightarrow}0$ and $\sqrt{n}(val(\mathcal{P}_{n})-val(\mathcal{P}))=O_{\mathbb{P}^{\ast}}(1)$.
\end{corollary}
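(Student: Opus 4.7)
The plan is to combine an explicit interpolation for finite-dimensional reducibility with Lemma \ref{general_verification} and Theorems \ref{consistency}-\ref{convergence_rate} for the statistical conclusions.

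For finite-dimensional reducibility, I will show that for any values $v_1,\ldots,v_n$ satisfying the discrete constraints in $(\mathcal{P}_n')$, there is an $L\in\mathcal{L}$ with $L(X_i)=v_i$. Take $f$ to be the non-increasing step function on $[a,\infty)$ equal to $v_{(i)}g_0(X_{(i)})$ on the interval from $X_{(i-1)}$ up to $X_{(i)}$ (with $X_{(0)}:=a$ and $f(X_{(i)}):=v_{(i)}g_0(X_{(i)})$), and $f(x):=0$ for $x>X_{(n)}$. Monotonicity of $f$ follows from the discrete monotonicity inequality, and the upper bound holds because $g_0$ is non-increasing, so $(Mg_0(x))^{1/(1-\gamma)}\ge (Mg_0(X_{(i)}))^{1/(1-\gamma)}\ge v_{(i)}g_0(X_{(i)})=f(x)$ whenever $x\le X_{(i)}$. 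Thus $f\in\mathcal{F}$, so $L:=f/g_0\in\mathcal{L}$ with $L(X_i)=v_i$, and the IW-SAA reduces to the stated linear program without loss of optimality.

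Next I verify the assumptions of Theorems \ref{consistency} and \ref{convergence_rate}. Convexity of $\mathcal{L}$ (Assumption \ref{convex_feasible_set}) is immediate from the convex pointwise constraints defining $\mathcal{F}$. For Assumption \ref{integrability_condition}, the boundedness $g_0(a)<\infty$ and monotonicity of $g_0$ give a constant $C$ with $(Mg_0(x))^{1/(1-\gamma)}\le Cg_0(x)$ on $\mathcal{X}$, so $|\phi_j L|\le C|\phi_j|$ has finite $P_{g_0}$-expectation from $\|\phi_j\|_{2+\delta}<\infty$ on a probability space. Assumption \ref{interior_point} is directly postulated. For Assumptions \ref{GC_class} and \ref{maximal_inequality}, I invoke Lemma \ref{general_verification} parts (\ref{unbounded_applicable1}) and (\ref{unbounded_applicable2}) with envelope $F=(Mg_0)^{1/(1-\gamma)}$ and sampling density $g=g_0$: the identity $F^{1-\gamma}=Mg_0=Mg$ verifies the envelope-sampling compatibility. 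The class $\mathcal{F}^\gamma=\{f^\gamma:f\in\mathcal{F}\}$ consists of non-increasing functions uniformly bounded by $(Mg_0(a))^{\gamma/(1-\gamma)}$ on $[a,\infty)$; by the classical bracketing bound for bounded monotone function classes (van der Vaart and Wellner, Theorem 2.7.5), $\log N_{[~]}(\varepsilon,\mathcal{F}^\gamma,\|\cdot\|_p)\le K/\varepsilon$ for every $p\ge 1$ and any probability measure, yielding finiteness in (\ref{unbounded_applicable1}) and the bracketing integral $\int_0^x\sqrt{K/\varepsilon}\,d\varepsilon<\infty$ in (\ref{unbounded_applicable2}). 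Combined with the moment hypothesis $\|\phi_j\|_{2+\delta}<\infty$, Lemma \ref{general_verification} then yields Assumptions \ref{GC_class} and \ref{maximal_inequality}, so Theorems \ref{consistency}-\ref{convergence_rate} deliver the consistency and $\sqrt{n}$-rate.

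The main obstacle I anticipate is keeping the envelope bookkeeping compatible on the unbounded domain. The assumption $g_0(a)<\infty$ together with the monotonicity of $g_0$ is exactly what forces $\mathcal{F}^\gamma$ to be a uniformly bounded class of monotone functions on $[a,\infty)$, which is what licenses the standard bracketing estimate; without such boundedness the natural monotone bracketing bounds would not directly apply. The identity $F^{1-\gamma}=Mg_0$ is also what permits a single choice of $(\gamma,F,g)$ to simultaneously satisfy both (\ref{unbounded_applicable1}) and (\ref{unbounded_applicable2}) in Lemma \ref{general_verification}, avoiding any inconsistent rescaling between the GC and maximal inequality verifications.
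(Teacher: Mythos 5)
Your proposal is correct and follows essentially the same route as the paper's proof: the step-function extension (relying on $g_0$ being non-increasing so that the pointwise upper bound survives interpolation) gives finite-dimensional reducibility, and the statistical claims follow by taking the envelope $F=(Mg_0)^{1/(1-\gamma)}$, applying the bounded-monotone bracketing bound (van der Vaart and Wellner, Theorem 2.7.5) to $\mathcal{F}^\gamma$, and invoking conditions (\ref{unbounded_applicable1})--(\ref{unbounded_applicable2}) of Lemma \ref{general_verification} together with Theorems \ref{consistency} and \ref{convergence_rate}. The only omission is an explicit mention of Assumption \ref{finite_opt}, which follows immediately from your own integrability bound $|\phi_j L|\le C|\phi_j|$, so this is not a substantive gap.
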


We comment that the requirement of $g_{0}$ being non-increasing and positive is without loss of generality. Suppose $g_{0}$ is not non-increasing. We define $g_{1}(x):=\inf_{y\in\lbrack a,x]}g_{0}(y)$. It is easy to see $\mathcal{F}$ is unchanged if $g_{0}$ is replaced by $g_{1}$. We can also normalize $g_{1}$ to make it a probability density with a corresponding change of $M$ ($g_{1}$ has finite integral since it is bounded by $g_0$). Moreover, if $g_{1}(b)=0$ for some $b\in(a,\infty)$, we can see the effective domain is actually $\mathcal{X}^{\prime}=[a,b]$ because $f(x)=0$ for any $x\in(b,\infty),f\in\mathcal{F}$. In this case, we can take $\mathcal{X}^{\prime}$ as the new domain and apply Corollary \ref{1D_compact}. Similar discussion also applies to the remaining shape constraints in this subsection.
% and we will not repeat this explanation.
% the assumptions on $g_{0}$ later.
% We comment that the requirement of $g_{0}$ being non-increasing and positive will not lose any generality. Suppose $g_{0}$ is not non-increasing. We define $g_{1}(x):=\inf_{y\in\lbrack a,x]}g_{0}(y)$ (it is the largest non-increasing function that is no larger than $g$ on $\mathcal{X}$). It is easy to see $\mathcal{F}$ is unchanged if $g_{0}$ is replaced by $g_{1}$. We can also normalize $g_{1}$ to make it a probability density with a corresponding change of $M$. Moreover, if $g_{1}(b)=0$ for some $b\in(a,\infty)$, we can see the effective domain is actually $\mathcal{X}^{\prime}=[a,b]$ because $f(x)=0$ for any $x\in(b,\infty),f\in\mathcal{F}$. In this case, we can take $\mathcal{X}^{\prime}$ as the new domain and apply Corollary \ref{monotone_compact}. This discussion also applies to the remaining shape constraints so we will not bother to explain the assumptions on $g_{0}$ later.

\begin{corollary}
[\textbf{Unimodality with unbounded domain}]\label{unimodal_unbounded}Let $\mathcal{X}=(-\infty,\infty)$ and the shape-constrained function class $\mathcal{F}=\{f:0\leq f(x)\leq(Mg_{0}(x))^{1/(1-\gamma)}~\forall x\in\mathcal{X}\text{, }f\text{ is unimodal about }c\text{ on }\mathcal{X}\}$, where $M>0$, $\gamma\in(0,1)$, $c\in\mathbb{R}$ and $g_{0}$ is a positive unimodal (about $c$) probability density on $\mathcal{X}$ with $g_{0}(c)<\infty$. Let $g_{0}$ be the sampling distribution. Suppose the functions $\phi_{j}$ satisfy $||\phi_{j}||_{2+\delta}<\infty,j=0,\ldots,l$ for some $\delta>0$ and Assumption \ref{interior_point} holds. Then the IW-SAA problem $(\mathcal{P}_{n})$ is equivalent to the linear program $(\mathcal{P}^{\prime}_{n})$ with the following discrete shape constraint:
\[
\left.
\begin{array}[c]{l}
L(X_{(i-1)})g_{0}(X_{(i-1)})\leq L(X_{(i)})g_{0}(X_{(i)}),\text{ if }X_{(i)}\leq c\\
L(X_{(i+1)})g_{0}(X_{(i+1)})\leq L(X_{(i)})g_{0}(X_{(i)}),\text{ if }X_{(i)}\geq c\\
0\leq L(X_{i})g_{0}(X_{i})\leq(Mg_{0}(X_{i}))^{1/(1-\gamma)},i=1,\ldots,n
\end{array}
\right.,
\]
% \begin{align*}
% \sup\text{ }  &  \frac{1}{n}\sum_{i=1}^{n}\phi_{0}(X_{i})L(X_{i})\\
% \text{subject to }  &  \frac{1}{n}\sum_{i=1}^{n}\phi_{j}(X_{i})L(X_{i})\leq \mu_{j},j=1,\ldots,m\\
% &  \frac{1}{n}\sum_{i=1}^{n}\phi_{j}(X_{i})L(X_{i})=\mu_{j},j=m+1,\ldots,l\\
% &  L(X_{(i-1)})g_{0}(X_{(i-1)})\leq L(X_{(i)})g_{0}(X_{(i)}),\text{ if }X_{(i)}\leq c\\
% &  L(X_{(i+1)})g_{0}(X_{(i+1)})\leq L(X_{(i)})g_{0}(X_{(i)}),\text{ if }X_{(i)}\geq c\\
% &  0\leq L(X_{i})g_{0}(X_{i})\leq(Mg_{0}(X_{i}))^{1/(1-\gamma)},i=1,\ldots,n
% \end{align*}
i.e., $(\mathcal{P}_{n})$ is finite-dimensionally reducible. Additionally, $|val(\mathcal{P}_{n})-val(\mathcal{P})|\overset{\text{a.s.*}}{\rightarrow}0$ and $\sqrt{n}(val(\mathcal{P}_{n})-val(\mathcal{P}))=O_{\mathbb{P}^{\ast}}(1)$.
\end{corollary}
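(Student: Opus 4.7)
The plan is to combine an explicit piecewise-constant interpolation (giving finite-dimensional reducibility) with a direct verification of Assumptions \ref{integrability_condition}--\ref{finite_opt} and an application of Lemma \ref{general_verification}(\ref{unbounded_applicable2}) to obtain Assumptions \ref{GC_class} and \ref{maximal_inequality}. The conclusions then follow from Theorems \ref{consistency} and \ref{convergence_rate}.

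For the finite-dimensional reduction, given sample values $h_i := L(X_{(i)}) g_0(X_{(i)})$ satisfying the discrete shape constraint, I would construct a piecewise-constant interpolant $f := L g_0$ on $\mathbb{R}$. Let $X_{(k-1)} < c < X_{(k)}$ (the tie case $c\in\{X_1,\ldots,X_n\}$ has probability zero and can be absorbed into either side). Set $f \equiv 0$ outside $[X_{(1)}, X_{(n)}]$; $f \equiv h_i$ on $[X_{(i)}, X_{(i+1)})$ for $i=1,\ldots,k-2$; $f \equiv h_{k-1}$ on $[X_{(k-1)}, c)$; $f(c) := h_{k-1} \vee h_k$; $f \equiv h_k$ on $(c, X_{(k)}]$; and $f \equiv h_{i+1}$ on $(X_{(i)}, X_{(i+1)}]$ for $i=k,\ldots,n-1$. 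The discrete monotonicity on each side of $c$ makes $f$ unimodal about $c$, and the pointwise bound $f(x) \le (Mg_0(x))^{1/(1-\gamma)}$ follows from the discrete envelope bound at the samples combined with the unimodality of $g_0$: for any $x$ between $X_{(i)}$ and $c$ one has $g_0(x) \ge g_0(X_{(i)})$, and $g_0(c) \ge g_0(X_{(k-1)}) \vee g_0(X_{(k)})$. Hence $L = f/g_0 \in \mathcal{L}$ realizes the prescribed sample values, so $(\mathcal{P}_n)$ reduces to $(\mathcal{P}_n')$ without loss of optimality.

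For the remaining hypotheses, H\"older's inequality with $||\phi_j||_{2+\delta} < \infty$ together with the integrability of bounded powers of the (bounded) density $g_0$ yields $\int |\phi_j| f\,dx \le \int |\phi_j|(Mg_0)^{1/(1-\gamma)}\,dx < \infty$, delivering Assumptions \ref{integrability_condition} and \ref{finite_opt}. Assumption \ref{convex_feasible_set} is immediate from convexity of uniformly bounded unimodal classes with fixed mode, and Assumption \ref{interior_point} is hypothesized. To apply Lemma \ref{general_verification}(\ref{unbounded_applicable2}), I would set $F := (Mg_0)^{1/(1-\gamma)}$ and $g := g_0$, so that $F^{1-\gamma} = M g_0$ satisfies the envelope condition with equality, and the moment condition on $\phi_j$ is given.

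The main obstacle is bounding the bracketing number of $\mathcal{F}^{\gamma} := \{f^{\gamma} : f \in \mathcal{F}\}$ in $L_q(g_0)$ with $q = 2(2+\delta)/\delta$. Since $x \mapsto x^{\gamma}$ is monotone on $[0,\infty)$, $\mathcal{F}^{\gamma}$ is again a class of unimodal functions about $c$, pointwise bounded by the constant $(Mg_0(c))^{\gamma/(1-\gamma)}$, with envelope $(Mg_0)^{\gamma/(1-\gamma)} \in L_q(g_0)$ (finite since $g_0$ is bounded and integrable). I would decompose each $f^{\gamma}$ into its restrictions to $(-\infty, c]$ (non-decreasing) and to $[c,\infty)$ (non-increasing), so $\mathcal{F}^{\gamma}$ is covered by the sum of two uniformly bounded monotone classes on $\mathbb{R}$. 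For each piece, a standard truncation argument---choosing $R(\varepsilon)$ so the $L_q(g_0)$ envelope mass outside $[-R,R]$ is $O(\varepsilon)$ and applying the classical compact-domain bracketing bound $\log N_{[\,]}(\varepsilon,\cdot,L_q) \lesssim 1/\varepsilon$ for bounded monotone functions (e.g., \cite{van1996weak} Theorem 2.7.5) on $[-R,R]$---gives $\log N_{[\,]}(\varepsilon, \mathcal{F}^{\gamma}, ||\cdot||_q) \lesssim 1/\varepsilon$. Hence $\int_0^x \sqrt{\log N_{[\,]}(\varepsilon, \mathcal{F}^{\gamma}, ||\cdot||_q)}\,d\varepsilon < \infty$, Assumptions \ref{GC_class} and \ref{maximal_inequality} follow from Lemma \ref{general_verification}, and Theorems \ref{consistency} and \ref{convergence_rate} then yield the a.s.\ consistency and the $O_{\mathbb{P}^{*}}(1/\sqrt{n})$ rate.
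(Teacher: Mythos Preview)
Your proposal is correct and follows essentially the same route as the paper: the piecewise-constant interpolation for finite-dimensional reducibility (with the key observation that the unimodality of $g_0$ propagates the discrete envelope bound to all of $\mathbb{R}$) matches the paper's construction, and the statistical guarantee is obtained via Lemma~\ref{general_verification}(\ref{unbounded_applicable2}) with the same envelope $F=(Mg_0)^{1/(1-\gamma)}$. The only cosmetic difference is in bounding the bracketing number of $\mathcal{F}^\gamma$: you decompose into two bounded monotone halves and invoke a truncation plus Theorem~2.7.5 of \cite{van1996weak}, whereas the paper appeals directly to the bounded-variation bracketing bound (Example~19.11 in \cite{van2000asymptotic}), which already covers bounded unimodal functions on all of $\mathbb{R}$ without truncation---so your truncation step, while not wrong, is unnecessary.
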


Corollary \ref{unimodal_unbounded} still holds if $\mathcal{X}$ is the half line $[a,\infty)$ or $(-\infty,b]$.

%Next, we consider multi-dimensional shapes. Since star unimodality and block unimodality fail to produce a finite-dimensionally reducible importance-weighted SAA problem even in the compact domain, we only consider orthounimodality here.

\begin{corollary}
[\textbf{Orthounimodality with unbounded domain}]\label{OU_unbounded}Let $a\in\mathbb{R}^{d}$ ($d \ge2$), $\mathcal{X}=[a,\infty)$ and the shape-constrained function class $\mathcal{F}=\{f:0\leq f(x)\leq(Mg_{0}(x))^{1/(1-\gamma)}~\forall x\in\mathcal{X}\text{, }f\text{ is orthounimodal about }a\text{ on }\mathcal{X}\}$, where $M>0$, $\gamma\in(0,1)$ and $g_{0}$ is a positive orthounimodal (about $a$) probability density on $\mathcal{X}$ with $g_{0}(a)<\infty$. Let $g_{0}$ be the sampling distribution. Suppose the functions $\phi_{j}$ satisfy $||\phi_{j}||_{1+\delta}<\infty,j=0,\ldots,l$ for some $\delta>0$ and Assumption \ref{interior_point} holds. Then the IW-SAA problem $(\mathcal{P}_{n})$ is equivalent to the linear program $(\mathcal{P}^{\prime}_{n})$ with the following discrete shape constraint:
\[
\left.
\begin{array}[c]{l}
L(X_{i})g(X_{i})\leq L(X_{j})g(X_{j}),\text{ if }X_{i}\geq X_{j}\text{ component-wise}\\
0\leq L(X_{i})g_{0}(X_{i})\leq(Mg_{0}(X_{i}))^{1/(1-\gamma)},i=1,\ldots,n
\end{array}
\right.,
\]
% \begin{align*}
% \sup\text{ }  &  \frac{1}{n}\sum_{i=1}^{n}\phi_{0}(X_{i})L(X_{i})\\
% \text{subject to }  &  \frac{1}{n}\sum_{i=1}^{n}\phi_{j}(X_{i})L(X_{i})\leq \mu_{j},j=1,\ldots,m\\
% &  \frac{1}{n}\sum_{i=1}^{n}\phi_{j}(X_{i})L(X_{i})=\mu_{j},j=m+1,\ldots,l\\
% &  L(X_{i})g(X_{i})\leq L(X_{j})g(X_{j}),\text{ if }X_{i}\geq X_{j}\text{ component-wise}\\
% &  0\leq L(X_{i})g_{0}(X_{i})\leq(Mg_{0}(X_{i}))^{1/(1-\gamma)},i=1,\ldots,n
% \end{align*}
i.e., $(\mathcal{P}_{n})$ is finite-dimensionally reducible. Additionally, $|val(\mathcal{P}_{n})-val(\mathcal{P})|\overset{\text{a.s.*}}{\rightarrow}0$.
\end{corollary}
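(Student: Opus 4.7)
The strategy is to specialize Theorem \ref{consistency} to this setting, paralleling Corollary \ref{OU_compact} but invoking the heavy-tail portion of Lemma \ref{general_verification} (condition \eqref{unbounded_applicable1}) in place of \eqref{compact_applicable1}. There are three tasks: (a) exhibit the finite-dimensional reduction; (b) verify Assumptions \ref{integrability_condition}--\ref{interior_point} and \ref{finite_opt}; and (c) verify Assumption \ref{GC_class}.

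For (a), I would mirror the one-dimensional interpolation sketched in Section \ref{sec:SAA main}. Given discrete values $\ell_i = L(X_i) g_0(X_i)$ satisfying the listed inequalities, the componentwise-decreasing interpolant
\[
f(x) \;=\; \min\Bigl\{\min\{\ell_i : X_i \le x \text{ componentwise}\},\; (Mg_0(x))^{1/(1-\gamma)}\Bigr\}
\]
(with the inner minimum set to $+\infty$ when its index set is empty) is orthounimodal about $a$, dominated by the envelope, and satisfies $f(X_j) = \ell_j$ precisely because $\ell_i \ge \ell_j$ for all $i$ with $X_i \le X_j$, which is the stated monotonicity constraint. Thus $(\mathcal{P}_n)$ has the claimed LP reformulation. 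For (b), Assumption \ref{convex_feasible_set} is immediate from convexity of orthounimodality and of the envelope bound; Assumption \ref{interior_point} is assumed; Assumptions \ref{integrability_condition} and \ref{finite_opt} follow from H\"older together with the envelope $F = (Mg_0)^{1/(1-\gamma)}$, the fact that $g_0(a)<\infty$ and $\int g_0 = 1$ which make $g_0^{\gamma/(1-\gamma)}$ integrable against $g_0$ to any order, and the moment condition $\|\phi_j\|_{1+\delta}<\infty$.

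The substantive step is (c). I would apply Lemma \ref{general_verification} with $g = g_0$ and envelope $F = (Mg_0)^{1/(1-\gamma)}$, so that $F^{1-\gamma} = Mg_0$ and condition \eqref{unbounded_applicable1} reduces to showing $N_{[~]}(\varepsilon,\mathcal{F}^\gamma,\|\cdot\|_{1+1/\delta})<\infty$ for every $\varepsilon>0$. Elements of $\mathcal{F}^\gamma$ are nonnegative functions on $[a,\infty)$, still orthounimodal about $a$ (since $t \mapsto t^\gamma$ is monotone), and uniformly dominated by $(Mg_0)^{\gamma/(1-\gamma)}$, which lies in $L^{1+1/\delta}(g_0)$ thanks to $g_0(a)<\infty$. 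For any $\eta>0$ I can then choose a hyperrectangle $[a,b_\eta]$ outside of which the envelope itself contributes less than $\eta$ in $L^{1+1/\delta}(g_0)$-norm; on $[a,b_\eta]$ the class reduces to uniformly bounded coordinate-wise monotone functions, which admits a finite bracket cover in $L^{1+1/\delta}(g_0)$ at every positive resolution, by standard bounded-domain monotone-class bracketing results in the vein of \cite{gao2007entropy}. Stitching each bounded-box bracket with the envelope on the complementary tail yields a finite $\varepsilon$-bracket cover of $\mathcal{F}^\gamma$, verifying the GC hypothesis. The main obstacle is this tail-truncation step: one must combine the bounded-box bracket cover with the envelope tail into a single $L^{1+1/\delta}(g_0)$-bracket while preserving orthounimodality and precisely accounting for the bracket size across the two regions. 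As in Corollary \ref{OU_compact}, the matching lower entropy bounds in \cite{gao2007entropy} preclude the bracketing integral condition \eqref{unbounded_applicable2} needed for Assumption \ref{maximal_inequality}, which is why only consistency, not the $\sqrt n$-rate, is claimed.
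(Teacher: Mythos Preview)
Your proposal is correct and follows essentially the same route as the paper: the interpolation $f(x)=\min\{(Mg_0(x))^{1/(1-\gamma)},\min_{X_i\le x}L(X_i)g_0(X_i)\}$ is exactly the paper's extension for the finite-dimensional reduction, and the GC verification via condition \eqref{unbounded_applicable1}---truncating to a hyperrectangle $[a,b]$, invoking the bounded-domain orthounimodal bracketing bound of \cite{gao2007entropy} there, and capping brackets by a uniform constant on the tail where $\int_{[a,\infty)\setminus[a,b]}g_0\,dx$ is small---matches the paper's argument (the paper uses a constant upper bound on the tail rather than the envelope, but this is immaterial since $g_0(a)<\infty$ makes the envelope bounded).
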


Corollaries \ref{monotone_unbounded}, \ref{unimodal_unbounded} and \ref{OU_unbounded} together stipulate that under the corresponding assumptions, monotonic and unimodal densities in single dimension, and orthounimodal densities in multiple dimension, elicit finite-dimensional reducibility. Moreover, the single-dimensional solutions exhibit consistency and canonical convergence rates, while the orthounimodal solutions exhibit only consistency and there is no  guarantee on the canonical rate, much like the situation in the bounded domain case presented in Section \ref{sec:compact}.

\subsection{Limitations} \label{sec:limitations}
While our approach works for the cases presented in Sections \ref{sec:compact} and \ref{sec:unbounded}, it bears limitations when applying to some other shape constraints, either due to a failure of finite-dimensional reducibility or consistency. We discuss these limitations as follows. Note that, while these are negative results at the moment, they also open up potential future developments along our current direction that can remedy these issues.
% displayed in Table \ref. 

\subsubsection{Convexity in unbounded domain.} \label{sec:convexity_unbounded}
Let $\mathcal{X}=[a,\infty)$. Note that a convex function can be a density on $\mathcal{X}$ only if it is non-increasing. As discussed in Lemma \ref{general_verification}, a shape-constrained function class in an unbounded domain should be chosen to be $\mathcal{F}=\{f:0\leq f(x)\leq(Mg_{0}(x))^{1/(1-\gamma)}~\forall x\in\mathcal{X}\text{, }f\text{ is non-increasing and convex on }\mathcal{X}\}$, where $M>0$, $\gamma\in(0,1)$ and $g_{0}$ is a positive probability density on $\mathcal{X}$. Unfortunately, in this case, convexity does not allow a finite-dimensional reduction of ($\mathcal{P}_{n}$) even though $val(\mathcal{P}_{n})$ is still consistent and has $\sqrt{n}$-convergence. Notice that for the discrete points $X_{1},\ldots,X_{n}$, the constraint $L\in\mathcal{L}$ is naturally expressed in the following way as in Corollary \ref{1D_compact}:%
\begin{equation}
\left\{
\begin{array}[c]{l}%
\frac{L(X_{(i+1)})g_{0}(X_{(i+1)})-L(X_{(i)})g_{0}(X_{(i)})}{X_{(i+1)}-X_{(i)}}\leq\frac{L(X_{(i+2)})g_{0}(X_{(i+2)})-L(X_{(i+1)})g_{0}(X_{(i+1)})}{X_{(i+2)}-X_{(i+1)}},i=0,\ldots,n-1\\
L(X_{(i+1)})g_{0}(X_{(i+1)})\leq L(X_{(i)})g_{0}(X_{(i)}),i=1,\ldots,n-1\\
0\leq L(X_{i})g_{0}(X_{i})\leq(Mg_{0}(X_{i}))^{1/(1-\gamma)},i=1,\ldots,n
\end{array}
\right.  \label{discrete_convex_unobunded}%
\end{equation}
However, (\ref{discrete_convex_unobunded}) is not an equivalent reformulation of $L\in\mathcal{L}$ since we may not be able to extend the discrete values $f(X_{i})=L(X_{i})g_{0}(X_{i})$ to a real function $f\in\mathcal{F}$. Although we can use, e.g., linear interpolation to get a convex function $f$, the constraint $f(x)\leq(Mg_{0}(x))^{1/(1-\gamma)}~\forall x\in\mathcal{X}$ could be violated. In other words, the requirement $f(x)\leq(Mg_{0}(x))^{1/(1-\gamma)}$ imposes a special structure on the convex functions $f\in\mathcal{F}$, but the simple discrete version (\ref{discrete_convex_unobunded}) cannot capture it. Therefore, ($\mathcal{P}_{n}$) may not admit a finite-dimensional reduction.
% to our best knowledge.
%unless $g_{0}^{1/(1-\gamma)}$ is a concave function (which is actually impossible since $g_{0}$ is a probability density). 

% \DSQ{[DS:The notion of linearly interpolating to obtain an actual feasible function hasn't really been discussed before this point, perhaps it should be mentioned earlier.]} 

\subsubsection{$\alpha$-unimodality.} \label{sec:alpha}
% \textcolor{red}{Yes you are right that $\alpha$-unimodality can also be discussed in the one-dimensional case. I have never thought about this case since I always treat this as the multidimensional generalization of unimodality. By a quick thought, it seems bounding its bracketing number (for any $\alpha$ and both bounded and unbounded domain) is not very easy using the results I know. So can we restrict our discussion to $d\ge2$ for $\alpha$-unimodality? An alternative way is only discussing star unimodality but not the general $\alpha$-unimodality. Then it suffices to consider $d\ge2$.} 
A function $f$ is said to be $\alpha$-unimodal about the mode $a\in\mathbb{R}^{d}$ (also called star unimodal if $\alpha=d$) if $t^{d-\alpha}f(tx+a)$ is non-increasing in $t\in(0,\infty)$ for any nonzero vector $x\in\mathbb{R}^{d}$. The problem with the function class of $\alpha$-unimodality is that it is too large to satisfy Assumptions \ref{GC_class} and \ref{maximal_inequality}. More precisely, $\alpha$-unimodality neither has a finite bracketing number nor satisfies Assumptions \ref{GC_class} or \ref{maximal_inequality} even if assumed to be bounded when $\alpha\geq d$ ($\alpha$-unimodal functions cannot be bounded when $\alpha<d$ unless it is zero almost everywhere with respect to the Lebesgue measure). 
% (ZHENYUAN: I NEED TO REPLACE THE FOLLOWING RESULT. FIND A COUNTEREXAMPLE WHICH DIRECTLY SHOW THE SAA OPTIMAL VALUE IS NOT CONSISTENT FOR $\alpha$-UNIMODALITY.)

\begin{proposition}
[\textbf{Failure of sufficient conditions for $\alpha$-unimodality}]\label{failure_alpha}Let $\mathcal{X}=[a,b]\subset\mathbb{R}^{d}$ ($d\ge2$) be a hyperrectangle and $M>0$ be an arbitrary constant. Define the $\alpha$-unimodal function class as $\mathcal{F}=\{f:f(x)\geq0~\forall x\in\mathcal{X}\text{, }f\text{ is }\alpha\text{-unimodal about }a\text{ on }\mathcal{X}\}$ if $\alpha<d$, and $\mathcal{F}=\{f:0\leq f(x)\leq M~\forall x\in\mathcal{X}\text{, }f\text{ is }\alpha\text{-unimodal about }a\text{ on }\mathcal{X}\}$ if $\alpha\geq d$. Let $g$ be any probability density on $\mathcal{X}$ that is equivalent to the Lebesgue measure, i.e., they are absolutely continuous with respect to each other. Then for any $p\geq1$, $N_{[~]}(\varepsilon,\mathcal{F},||\cdot||_{p})=\infty$ for all sufficiently small $\varepsilon>0$. Also, if $\phi_{j}$ is not almost everywhere zero (with respect to Lebesgue measure), then $\mathcal{F}_{j}$ does not satisfy Assumptions \ref{GC_class} or \ref{maximal_inequality}.
\end{proposition}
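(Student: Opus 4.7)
The key structural observation is that $\alpha$-unimodality constrains only the radial behavior of $f$ along rays from $a$, leaving its angular behavior entirely free; consequently $\mathcal{F}$ embeds an essentially arbitrary family of angular indicator functions on $S^{d-1}$, which is already too rich for any empirical-process control. Concretely, let $\pi(y)=(y-a)/\|y-a\|$ be the angular projection from $a$, and for each measurable $A\subseteq S^{d-1}$ define
\[
f_A(y) \;=\; \begin{cases} M\,I(\pi(y)\in A), & \alpha\ge d,\\[2pt] \|y-a\|^{-(d-\alpha)}\,I(\pi(y)\in A)\,I(\|y-a\|\le r_0), & \alpha<d,\end{cases}
\]
where $r_0>0$ is a radial cutoff included in the second case only to keep $\|f_A\|_p<\infty$. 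A direct check that $t^{d-\alpha}f_A(tx+a)$ is non-increasing in $t>0$ for every nonzero $x$ places $f_A\in\mathcal{F}$.

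For the infinite bracketing bound I would build a countable Rademacher-type family $\{A_k\}_{k\ge 1}$ on $S^{d-1}$ with $|A_k\triangle A_l|_{\mathrm{sphere}}\ge c_0>0$ uniformly in $k\ne l$---e.g., fix a positive-measure region $R\subset S^{d-1}$, identify it with $[0,1]$ via a measure-preserving map, and let $A_k$ correspond to the points whose $k$-th binary digit equals $1$, for which $|A_k\triangle A_l|=|R|/2$ for all $k\ne l$. Equivalence of $g$ with Lebesgue measure transfers this to $\|f_{A_k}-f_{A_l}\|_p\ge c_1>0$, so no finite collection of $\varepsilon$-brackets of diameter below $c_1$ can cover them, giving $N_{[\,]}(\varepsilon,\mathcal{F},\|\cdot\|_p)=\infty$ for $\varepsilon<c_1/2$.

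For the failure of Assumptions \ref{GC_class} and \ref{maximal_inequality}, assume without loss of generality $\int\phi_j^+\,dy>0$ (otherwise use $\phi_j^-$). Given samples $X_1,\ldots,X_n\sim g$, set $A_n:=\{\pi(X_i):\phi_j(X_i)>0,\,1\le i\le n\}$, a finite and hence spherically null subset of $S^{d-1}$, and let $f_n=f_{A_n}$. Because two i.i.d.\ continuous samples almost surely do not share a direction from $a$ when $d\ge 2$, the indicator $I(\pi(X_i)\in A_n)$ equals $I(\phi_j(X_i)>0)$ almost surely, so by the strong law
\[
P_n(\phi_j f_n/g) \;=\; \frac{M}{n}\sum_{i=1}^n\frac{\phi_j^+(X_i)}{g(X_i)} \;\longrightarrow\; M\int\phi_j^+\,dy \;>\; 0
\]
when $\alpha\ge d$, with the obvious radial-weight analogue when $\alpha<d$. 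At the same time $P_g(\phi_j f_n/g)=M\int_{\pi^{-1}(A_n)}\phi_j\,dy=0$, since $\pi^{-1}(A_n)$ is a finite union of rays through $a$ and hence Lebesgue-null in $\mathbb{R}^d$ for $d\ge 2$. Therefore $\|P_n-P_g\|_{\mathcal{F}_j}\ge|(P_n-P_g)(\phi_j f_n/g)|$ stays bounded below in the limit (defeating Assumption \ref{GC_class}), and $\|G_n\|_{\mathcal{F}_j}\ge\sqrt{n}\,|(P_n-P_g)(\phi_j f_n/g)|\to\infty$ in outer expectation (defeating Assumption \ref{maximal_inequality}).

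The main technical nuisance I anticipate is the $\alpha<d$ regime, where unbounded elements like $c\|y-a\|^{-(d-\alpha)}\in\mathcal{F}$ for arbitrary $c>0$ force $\mathcal{F}$ to lack a finite envelope. This turns out to be a convenient shortcut: the integrable-envelope clause of Assumption \ref{GC_class} fails outright for $\mathcal{F}_j$ as soon as $\phi_j$ is not almost everywhere zero, making the LLN step above unnecessary in that regime. Similarly, when $p(d-\alpha)\ge d$ the uncapped $f_A$ already lies outside $L_p(P_g)$, so $N_{[\,]}(\varepsilon,\mathcal{F},\|\cdot\|_p)=\infty$ trivially; for smaller $p$ the capped version keeps the norms finite and the Rademacher argument proceeds as stated.
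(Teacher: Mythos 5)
Your proposal is correct, and its second half --- the failure of Assumptions \ref{GC_class} and \ref{maximal_inequality} --- is essentially the paper's own argument: both exploit that $\alpha$-unimodality constrains only the radial profile, so one may support a member of $\mathcal{F}$ on the Lebesgue-null union of rays from $a$ through the (sample-dependent) points where $\phi_j>0$, making $P_g(\phi_j f_n/g)=0$ while $P_n(\phi_j f_n/g)$ converges by the SLLN to a strictly positive integral; your $f_{A_n}$ with radial weight $\|y-a\|^{-(d-\alpha)}$ (resp.\ constant $M$) is exactly the paper's counterexample. Where you genuinely diverge is the bracketing claim: the paper does not construct a packing at all, but instead deduces $N_{[~]}(\varepsilon,\mathcal{F},\|\cdot\|_p)=\infty$ from the already-established failure of the Glivenko--Cantelli property via the contrapositive of \cite{van1996weak} Theorem 2.4.1 (finite $L_1$-bracketing numbers would imply GC, and $\|\cdot\|_1\leq\|\cdot\|_p$). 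Your direct route through a Rademacher family $\{A_k\}$ of angular sets with $|A_k\triangle A_l|=|R|/2$ is valid and more self-contained, but it carries a small extra burden the paper's route avoids: to get a \emph{uniform} separation $\|f_{A_k}-f_{A_l}\|_p\geq c_1>0$ over all pairs you need to argue that the pushforward $A\mapsto\int_{\pi^{-1}(A)\cap\mathcal{X}}|w|^p g\,dy$ has an a.e.\ positive density on the sphere and hence is bounded below on sets of spherical measure $\geq|R|/2$ (a short truncation argument on the density), and you must also verify that the capped $f_A$ has finite $L_p(P_g)$-norm, which can fail if $g$ is unbounded near $a$ --- though in that degenerate case the bracketing number is infinite for the trivial reason you already note. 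Your observation that for $\alpha<d$ the integrable-envelope clause fails outright is a legitimate shortcut the paper does not use. One cosmetic point: the paper disposes of Assumptions \ref{GC_class} and \ref{maximal_inequality} simultaneously by noting both imply $\|P_n-P_g\|_{\mathcal{F}_j}\overset{\mathbb{P}^*}{\rightarrow}0$, whereas you handle the maximal inequality separately via $\sqrt{n}$-blowup; both are fine.
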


Proposition \ref{failure_alpha} stipulates that $\alpha$-unimodality fails to satisfy the sufficient conditions for consistency and $\sqrt{n}$-convergence. Next, we provide a counterexample to directly show the IW-SAA optimal value for $\alpha$-unimodality may not be consistent.

Consider the following two-dimensional $\alpha$-unimodal shape-constrained problem
\begin{align*}
(\mathcal{P}_{\alpha}):\quad\quad\quad\sup_{f}~  &  \mathbb{E}_{f}[I(X\in
S)]\\
\text{subject to }  &  \mathbb{E}_{f}[I(X\in\mathcal{X})]=1\\
&  f\in\mathcal{F}%
\end{align*}
where $\mathcal{X}=\{x\in\mathbb{R}^{2}:x\geq0,||x||\leq2\}$, $S=\{x\in
\mathbb{R}^{2}:x\geq0,1\leq||x||\leq2\}$, $||\cdot||$ denotes the usual Euclidean norm, and the shape constraint is $\mathcal{F}=\{f:f(x)\geq0~\forall x\in\mathcal{X},f\text{ is }\alpha
\text{-unimodal about }(0,0)\text{ on }\mathcal{X}\}$ if $\alpha<2$ and $\mathcal{F}=\{f:0\leq f(x)\leq M~\forall x\in\mathcal{X},f\text{ is }%
\alpha\text{-unimodal about }(0,0)\text{ on }\mathcal{X}\}$ for any $M>4/(3\pi)$ if $\alpha\geq2$. Given a sampling distribution $g$ on
$\mathcal{X}$, the corresponding IW-SAA problem is given by%
\begin{align*}
(\mathcal{P}_{\alpha.n}):\quad\quad\quad\sup_{L}~  &  \frac{1}{n}\sum
_{i=1}^{n}I(X_{i}\in S)L(X_{i})\\
\text{subject to }  &  \frac{1}{n}\sum_{i=1}^{n}I(X_{i}\in\mathcal{X}%
)L(X_{i})=1\\
&  L\in\mathcal{L}%
\end{align*}
where $\mathcal{L}=\{f/g:f\in\mathcal{F}\}$. For this example, we have:

\begin{proposition}[\textbf{Lack of consistency for $\alpha$-unimodality}]\label{2Dcounterexample}
The optimal value of $(\mathcal{P}_{\alpha})$ satisfies $val(\mathcal{P}%
_{\alpha})\leq(2^{\alpha}-1)/2^{\alpha}$. For any sampling distribution $g$,
the IW-SAA optimal value satisfies $\mathbb{P}_{\ast}(\exists N_{0}\in\mathbb{N}\text{ s.t. }val(\mathcal{P}_{\alpha
.n})=1,\forall n\geq N_{0})=1$. Consequently, the IW-SAA optimal value is not consistent.
\end{proposition}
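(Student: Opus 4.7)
My plan has three steps.

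First, I would prove the upper bound $val(\mathcal{P}_\alpha)\leq(2^\alpha-1)/2^\alpha$ by a ray-wise dyadic argument. For any unit vector $u$ in the first quadrant, set $h(r):=r^{2-\alpha}f(ru)$, which is non-increasing on $(0,2]$ by $\alpha$-unimodality. Comparing $h(r)$ with $h(r/2)$ gives $f(ru)\leq 2^{\alpha-2}f((r/2)u)$, and the change of variable $s=r/2$ yields the dyadic inequality
\[
\int_{2^{-k+1}}^{2^{-k+2}}f(ru)\,r\,dr\leq 2^\alpha\int_{2^{-k}}^{2^{-k+1}}f(ru)\,r\,dr,\quad k\geq 1.
\]
Iterating from $k=1$ gives $\int_{2^{-k}}^{2^{-k+1}}f(ru)\,r\,dr\geq 2^{-k\alpha}\int_1^2 f(ru)\,r\,dr$; summing the geometric series over $k\geq1$ produces $\int_0^1 f(ru)\,r\,dr\geq(2^\alpha-1)^{-1}\int_1^2 f(ru)\,r\,dr$. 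Integrating in polar coordinates over $\theta\in[0,\pi/2]$ and using the mass constraint $\int_{\mathcal{X}}f=1$ then yields $\int_S f\leq(2^\alpha-1)/2^\alpha$.

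Second, I would exhibit for all sufficiently large $n$ a feasible $L\in\mathcal{L}$ for IW-SAA whose objective value equals $1$. Writing each $X_i$ in polar coordinates as $(r_i,\theta_i)$, the angles $\{\theta_i\}$ are a.s.\ distinct, so for every $n$ I can pick $\delta_n>0$ small enough that the sectors $\Theta_j:=[\theta_j-\delta_n/2,\theta_j+\delta_n/2]$ (one around each $X_j\in S$) are pairwise disjoint and contain no $\theta_i$ with $X_i\notin S$. For $\alpha\geq2$, take $f$ equal to a constant $c_n$ on $\bigcup_{X_j\in S}(\Theta_j\cap\mathcal{X})$ and $0$ elsewhere; on every ray $f$ is either identically $0$ or constant and then drops to $0$, hence star-unimodal and therefore $\alpha$-unimodal for $\alpha\geq2$. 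The discrete mass constraint $\frac{1}{n}\sum_iL(X_i)=1$ forces $c_n=n/\sum_{X_j\in S}1/g(X_j)$; by the strong law of large numbers $\frac{1}{n}\sum_{X_j\in S}1/g(X_j)\to\int_S dx=3\pi/4$, so $c_n\to 4/(3\pi)$ a.s., and since $M>4/(3\pi)$ we have $c_n\leq M$ eventually, whence $f\in\mathcal{F}$. For $\alpha<2$ (no boundedness constraint), I instead use $f(r,\theta)=c_j r^{\alpha-2}$ on each $\Theta_j$; then $r^{2-\alpha}f(r,\theta)=c_j$ is constant in $r$ before dropping to $0$, directly confirming $\alpha$-unimodality, and the $c_j$'s can be freely scaled to satisfy the mass constraint. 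In either case $L(X_i)=0$ for $X_i\notin S$, so the SAA objective equals the mass-constraint value $1$.

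Third, since $val(\mathcal{P}_{\alpha.n})\leq1$ always and the construction achieves $1$, we conclude $val(\mathcal{P}_{\alpha.n})=1$ for all large $n$ a.s., whereas $val(\mathcal{P}_\alpha)\leq(2^\alpha-1)/2^\alpha<1$; hence IW-SAA is not consistent. The main obstacle is the sector construction in Step~2: $\alpha$-unimodality must be verified ray by ray for a density supported on disconnected thin sectors, and the strict inequality $M>4/(3\pi)$ is precisely calibrated so that the LLN limit $c_n\to 4/(3\pi)$ fits under $M$ asymptotically, regardless of the choice of sampling density $g$. The sharp factor $(2^\alpha-1)/2^\alpha$ in Step~1 arises from $\sum_{k\geq1}2^{-k\alpha}=(2^\alpha-1)^{-1}$; a naive one-step comparison yields only $2^\alpha/(2^\alpha+1)$, which still suffices for inconsistency but misses the sharp worst-case gap.
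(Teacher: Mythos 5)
Your proposal is correct and follows the same two-part strategy as the paper's proof, with both halves executed slightly differently. For the upper bound, the paper compares $r^{2-\alpha}f(ru)$ directly to its value at $r=1$, obtaining $\int_1^2 f(ru)\,r\,dr\le\frac{2^\alpha-1}{\alpha}f(u)$ and $\int_0^1 f(ru)\,r\,dr\ge\frac{1}{\alpha}f(u)$ in a single step, whereas your dyadic iteration reaches the same ratio $\int_0^1\ge(2^\alpha-1)^{-1}\int_1^2$ by summing a geometric series; the two are equivalent and equally sharp. For the SAA part, the paper concentrates $f$ on the measure-zero union of rays through the origin and the samples in $S$ (so $f=0$ a.e., making the pathology especially stark) and locates the normalizing constant via the intermediate value theorem, while you use pairwise-disjoint thin angular sectors around those samples, which yields a positive-measure support and a closed-form $c_n$, at the cost of the extra bookkeeping that $\delta_n$ must exclude the angles of samples outside $S$ (possible a.s.\ since the angles are distinct under a continuous sampling density). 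Both constructions verify $\alpha$-unimodality ray by ray because the support is purely radial, and for $\alpha\ge2$ both invoke the law of large numbers with limit $3\pi/4$ to ensure the constant eventually fits under $M>4/(3\pi)$. Your argument is complete; the only cosmetic point is that the final almost-sure statement should be phrased via the inner probability $\mathbb{P}_{\ast}$, since the event involves the possibly nonmeasurable $val(\mathcal{P}_{\alpha.n})$ --- your measurable probability-one event on which the construction succeeds sits inside it, which is exactly what $\mathbb{P}_{\ast}(\cdot)=1$ requires.
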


\subsubsection{Block unimodality.}\label{sec:block}
A function $f$ is said to be block unimodal about the mode $a\in\mathbb{R}^{d}$ on $[a,\infty)$, if it is orthounimodal and satisfies a nonnegativity condition: for any vectors $b_{1},b_{2}$ with $a\leq b_{1}<b_{2}$, the alternating sum satisfies
\begin{equation}
\sum_{v\in V}(-1)^{\#\text{ of components of }b_{1}\text{ in }v}f(v)\geq0,
\label{alternating_sum}%
\end{equation}
where $V$ are the vertices of the hyperrectangle $[b_{1},b_{2}]$. This nonnegativity condition is similar to that for a cumulative distribution function, i.e., if $f$ denotes a cumulative distribution function, then this condition means that $f$ has nonnegative mass in the hyperrectangle $[b_{1},b_{2}]$. Let $\mathcal{X}=[a,b]\subset\mathbb{R}^{d}$ ($d\ge2$) and $\mathcal{F}=\{f:0\leq f(x)\leq M~\forall x\in\mathcal{X}$, $f$ is block unimodal about $a$ on $\mathcal{X}\}$. Since block unimodality contains fewer functions than orthounimodality, condition (\ref{compact_applicable1}) in Lemma \ref{general_verification} must hold for block unimodality when it holds for orthounimodality. Therefore, we can show $val(\mathcal{P}_{n})$ is consistent with proper conditions as in Corollary \ref{OU_compact}. However, the equivalent reformulation of $L\in\mathcal{L}$ using discrete values $X_{1},\ldots,X_{n}$ in ($\mathcal{P}_{n}$) is hard to obtain. In fact, condition (\ref{alternating_sum}) vanishes if it is restricted to discrete values $X_{1},\ldots,X_{n}$ because any subset of $\{X_{1},\ldots,X_{n}\}$ with cardinality $2^d$ cannot constitute the vertices of any hyperrectangle with probability 1. If this condition is ignored in the IW-SAA, then the problem ($\mathcal{P}_{n}$) for block unimodality reduces to the same linear program for orthounimodality in Corollary \ref{OU_compact}. However, it is not guaranteed that each discrete feasible solution $L(X_{i}),i=1,\ldots,n$ in this linear program can be extended to a function $L\in\mathcal{L}$ that satisfies block unimodality. Thus, there is no equivalent reformulation of $L\in\mathcal{L}$ for block unimodality that only involves $X_{1},\ldots,X_{n}$.

Finally, we note that in the multidimensional case, \cite{lam2021orthounimodal} explains how orthounimodality can be more well-suited for extreme event applications compared to $\alpha$-unimodality and block unimodality. This is due to the sensitivity of $\alpha$-unimodality with respect to its mode and the difficulty in interpretability for block unimodality. Orthounimodality, on the other hand, is demonstrably less sensitive to the mode and easy to intuit. From this perspective, while currently IW-SAA faces limitations in applying to $\alpha$-unimodality and block unimodality, it encouragingly applies to the important case of orthounimodality.

\subsection{Summary of Results for the Considered Shape Constraints}
We close this section by connecting all our derived results back to Table \ref{checkbox}. Corollaries \ref{1D_compact}-\ref{OU_unbounded} in Sections \ref{sec:compact} and \ref{sec:unbounded} show that IW-SAAs applied to monotonicity, convexity (bounded domain), unimodality and orthounimodality are finite-dimensionally reducible to linear programs, which are indicated as the checks in the second column in Table \ref{checkbox}. Moreover, these corollaries show that their IW-SAAs are consistent and, except for orthounimodality, exhibit canonical convergence rates, which are indicated as the checks in the third and fourth columns in Table \ref{checkbox}. The remaining cases are discussed in Section \ref{sec:limitations}. Although IW-SAA applied to convexity (unbounded domain) is consistent and has canonical convergence rate, we have not found its finite-dimensional reduction. For $\alpha$-unimodality, Proposition \ref{2Dcounterexample} provides a counterexample to show its IW-SAA may not be consistent, let alone have canonical convergence rate. Hence, there is no reason to study its finite-dimensional reduction. For block unimodality, the only property we can obtain is consistency as explained in Section \ref{sec:block}. All these results are indicated by the checks, crosses and blanks in the corresponding rows in Table \ref{checkbox}. 

\section{Complexity in the Number of Constraints\label{sec:SAA_DRO_complexity}}
We have so far focused on the convergence properties of our IW-SAA. 
On the other hand, as the simulated scenario size $n$ increases, IW-SAA requires an increasing number of decision variables and constraints, making the linear program harder to solve. In this section, we discuss computationally efficient ways to encode our IW-SAA and the corresponding complexity with respect to the number of constraints therein. 

First, for one-dimensional shape constraints, by using order statistics, the number of discrete shape constraints is $\Theta(n)$ as in Corollary \ref{1D_compact}, which is not improvable.

For orthounimodality, the above sorting method does not work because orthounimodality relies on a partial order. In this case, a naive approach is to construct an orthounimodality constraint $L(X_{i})g(X_{i})\leq L(X_{j})g(X_{j})$ for each pair $X_{i}\geq X_{j}$. To study the complexity of this approach, we assume that the components of $X_{i}$ are continuously distributed and mutually independent. Let $N_{1}(n,d)$ denote the number of orthounimodality constraints constructed from $n$ samples in $\mathbb{R}^{d}$ using this approach. We have%
\[
\mathbb{E}_{g}[N_{1}(n,d)]=\mathbb{E}_{g}\left[  \sum_{1\leq i\neq j\leq n}I(X_{i}\geq X_{j})\right]  =\sum_{1\leq i\neq j\leq n}\mathbb{P}_{g}(X_{i}\geq X_{j})=\frac{n(n-1)}{2^{d}},
\]
where $\mathbb{P}_{g}(X_{i}\geq X_{j})=1/2^{d}$ is due to the mutually independent and continuously distributed components. We observe that the complexity of this naive approach is $\Theta(n^{2})$ for a fixed dimension, which is larger than that of the one-dimensional shapes. Additionally, the complexity is decreasing in $d$ because the condition $X_{i}\geq X_{j}$ is harder to achieve for higher dimensions.

One clear drawback of the above naive approach is that it includes many redundant constraints. For example, if we have a chain $X_{1}\geq X_{2}\geq X_{3}$, then it suffices to specify the constraints for $X_{1}\geq X_{2}$ and $X_{2}\geq X_{3}$ but the naive approach still includes the redundant constraint $X_{1}\geq X_{3}$. In fact, if we have such an inequality chain with length $k$, then this approach will specify $k(k-1)/2$ orthounimodality constraints from the chain while only $k-1$ of them are non-redundant. Removing such redundant constraints can significantly reduce the complexity, especially for low dimensions when the chain tends to be long. This removal procedure is known as transitive reduction in graph theory. Here we explain how this can be done via adjacency matrix multiplication. We construct a direct graph $G=(V(G),E(G))$ where the vertices $V(G)=\{1,\ldots,n\}$ denote all samples and a directed edge $i\rightarrow j$ exists $(i\neq j)$ if and only if $X_{i}\geq X_{j}$. We notice that a orthounimodality constraint for $X_{i}\geq X_{j}$ is non-redundant if and only if there is no other sample that is in between them, or equivalently, there is no path of length two that is from $i$ to $j$. Let $A$ be the adjacency matrix (a boolean matrix) of graph $G$. Then we compute the ``two-step adjacency matrix'' $A^{2}$ via boolean matrix multiplication and notice that the $(i,j)$ element in $A^{2}$ is $0$ if and only if there is no path of length two that is from $i$ to $j$. Therefore, it suffices to pick all pairs $(i,j)$ whose element in $A$ is $1$ but $0$ in $A^{2}$ and they correspond to all non-redundant orthounimodality constraints. We summarize this algorithm in Algorithm \ref{alg:efficient_OU}. Its time complexity is $O(n^{2.3729})$ using the best exact algorithms for matrix multiplication (\cite{le2014powers}). However, if we leverage the sparse structure of $A$ and use a sparse matrix to represent $A$ and do matrix multiplication, the algorithm can be accelerated.

\begin{algorithm}[hbt!]
\caption{Construction of non-redundant orthounimodality constraints}
\label{alg:efficient_OU}
\textbf{Inputs:} Samples $X_i,i=1,\ldots,n$ and sampling density $g$
\begin{algorithmic}
\STATE Construct a sparse $n\times n$ boolean matrix $A$ where $A_{ij}=1$ if $X_i\geq X_j,i\neq j$ and $A_{ij}=0$ otherwise
\STATE Compute $A^2$ as a boolean matrix
\RETURN Select all $(i,j)$ pairs satisfying $A_{ij}=1$ and $A^2_{ij}=0$ and construct the orthounimodality constraint $L(X_{i})g(X_{i})\leq L(X_{j})g(X_{j})$ accordingly
\end{algorithmic}
\end{algorithm}

% Now let us analyze the complexity of the orthounimodality constraints after  on orthounimodality constraints
Let $N_{2}(n,d)$ denote the number of non-redundant orthounimodality constraints from $n$ samples in $\mathbb{R}^{d}$ after the transitive reduction. We have:
\begin{theorem}\label{complexity}
Suppose the IW-SAA samples $X_{i}\in\mathbb{R}^{d},i=1,\ldots,n$ are i.i.d. from $g$ which have continuously distributed and mutually independent components. Then 
\[
\mathbb{E}_{g}[N_{2}(n,d)]=n(n-1)\sum_{k=0}^{n-2}\frac{\binom{n-2}{k}(-1)^{k}}{(k+1)^{d}(k+2)^{d}}.
\]
In particular, when $d=2$, $\mathbb{E}_{g}[N_{2}(n,2)]$ can be simplified as%
\[
\mathbb{E}_{g}[N_{2}(n,2)]=(n+1)\sum_{i=1}^{n}\frac{1}{i}-2n=\Theta(n\log n).
\]
Additionally, we have%
\[
\frac{1}{2}\mathbb{E}_{g}[N_{2}(n,d-1)]\leq \mathbb{E}_{g}[N_{2}(n,d)]\leq\frac{n(n-1)}{2^{d}},~\forall d\geq2.
\]
\end{theorem}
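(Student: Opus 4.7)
\emph{Reduction and setup.} Since each coordinate is continuously distributed and the coordinates of each $X_i$ are mutually independent, the probability integral transform applied marginally lets me assume without loss of generality that each $X_i$ is uniform on $[0,1]^d$; this transformation is monotone in each coordinate and so preserves every dominance relation $X_i \geq X_j$, hence preserves $N_2(n,d)$. By exchangeability and linearity of expectation,
\[
\mathbb{E}_g[N_2(n,d)] = n(n-1)\,\mathbb{P}\bigl(X_1 \geq X_2\text{ and }\nexists\,k\in\{3,\ldots,n\}:\,X_1 \geq X_k \geq X_2\bigr).
\]

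\emph{Closed-form computation.} Conditional on $(X_1, X_2)$ with $X_1 \geq X_2$, the probability that an independent uniform $X_k$ lies in $[X_2, X_1]$ is $V := \prod_{l=1}^d (X_{1,l} - X_{2,l})$, and by independence of the samples the probability that none of $X_3,\ldots,X_n$ does is $(1-V)^{n-2}$. Conditional on $X_{1,l} \geq X_{2,l}$, the difference $D_l := X_{1,l} - X_{2,l}$ has the triangular density $2(1-d_l)$ on $[0,1]$, independently across $l$, and $\mathbb{P}(X_1 \geq X_2) = 2^{-d}$ cancels the product of normalizers. This yields
\[
\mathbb{E}_g[N_2(n,d)] = n(n-1) \int_{[0,1]^d} \left(1 - \prod_{l=1}^d d_l\right)^{\!n-2} \prod_{l=1}^d (1 - d_l)\,d\mathbf{d}.
\]
Expanding the outer factor by the binomial theorem, interchanging sum and integral, and using $\int_0^1 d_l^{k}(1 - d_l)\,dd_l = 1/((k+1)(k+2))$ coordinate by coordinate produces the claimed closed form.

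\emph{The $d=2$ simplification.} Using $1/((k+1)^2(k+2)^2) = 1/(k+1)^2 - 2/((k+1)(k+2)) + 1/(k+2)^2$, the sum splits into three alternating binomial sums with $m := n-2$. Two are classical: $\sum_{k} \binom{m}{k}(-1)^k/(k+1)^2 = H_{m+1}/(m+1)$ (obtained by evaluating $\int_0^1\!\int_0^1 (1-xy)^m dx\,dy$) and $\sum_{k} \binom{m}{k}(-1)^k/((k+1)(k+2)) = 1/(m+2)$ by telescoping. The third, $\sum_{k}\binom{m}{k}(-1)^k/(k+2)^2$, equals $-\int_0^1 t\log t\,(1-t)^m\,dt$, and via the Beta-function derivative $\partial_a B(a+1,m+1)|_{a=1}$ combined with $\psi(2) - \psi(m+3) = 1 - H_{m+2}$ evaluates to $(H_{m+2}-1)/((m+1)(m+2))$. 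Assembling the three pieces and applying $H_n = H_{n-1} + 1/n$ collapses the expression to $(n+1)H_n - 2n$, which is $\Theta(n\log n)$ since $H_n \sim \log n$.

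\emph{The two bounds.} The upper bound is immediate: $N_2(n,d) \leq N_1(n,d)$ and $\mathbb{E}_g[N_1(n,d)] = n(n-1)/2^d$. For the lower bound I use projection: let $Y_i := (X_{i,1},\ldots,X_{i,d-1})$. If $(i,j)$ is a non-redundant edge among the $Y$'s \emph{and} $X_{i,d} \geq X_{j,d}$, then $(i,j)$ is non-redundant among the $X$'s, because any $X_k$ sandwiched between $X_i$ and $X_j$ in $\mathbb{R}^d$ would in particular sandwich $Y_k$ in $\mathbb{R}^{d-1}$, contradicting non-redundancy there. Since the $d$-th coordinates are independent of $Y$ and $\mathbb{P}(X_{i,d} \geq X_{j,d}) = 1/2$, linearity of expectation gives $\mathbb{E}_g[N_2(n,d)] \geq (1/2)\mathbb{E}_g[N_2(n,d-1)]$. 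The main technical hurdle will be the third alternating sum in the $d=2$ case, which requires a Beta-function derivative to introduce the harmonic number; the rest of the argument is conditioning, a binomial expansion, and a one-line projection coupling.
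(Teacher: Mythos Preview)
Your proposal is correct. The closed-form computation and the two bounds match the paper's argument closely: the paper also conditions on $(X_1,X_2)$, expands $(1-V)^{n-2}$ by the binomial theorem, and factorizes the resulting integral coordinatewise (without first applying the probability integral transform, but this is cosmetic); for the lower bound the paper carries out the integral inequality $(1-\prod_{i=1}^{d}\Delta_i)^{n-2}\geq(1-\prod_{i=1}^{d-1}\Delta_i)^{n-2}$ and collects the leftover $\tfrac12$ from the $d$-th coordinate, which is exactly the analytic form of your projection-coupling observation.

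The only genuine methodological difference is the $d=2$ simplification. The paper shifts the index to rewrite the sum as $\sum_{k=2}^{n}\binom{n}{k}(-1)^k\bigl(\tfrac{1}{k-1}-\tfrac{1}{k}\bigr)$, introduces the two power series $f_1(x)=\sum_k\binom{n}{k}(-1)^k x^{k-1}/(k-1)$ and $f_2(x)=\sum_k\binom{n}{k}(-1)^k x^{k}/k$, recognizes their derivatives as $((1-x)^n-1+nx)/x^2$ and $((1-x)^n-1+nx)/x$, and integrates explicitly on $[0,1]$. Your route---the partial-fraction identity $\tfrac{1}{(k+1)^2(k+2)^2}=\tfrac{1}{(k+1)^2}-\tfrac{2}{(k+1)(k+2)}+\tfrac{1}{(k+2)^2}$ followed by three standard alternating binomial sums, the last via $\partial_a B(a+1,m+1)|_{a=1}$---is equally valid and of comparable length. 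The paper's approach avoids the digamma/Beta-derivative machinery entirely and stays with polynomial integration; yours avoids the index shift and keeps the three pieces modular. Either is fine.
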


Theorem \ref{complexity} shows how the dependence of the number of orthounimodality constraints on $n$ can be reduced via Algorithm \ref{alg:efficient_OU}. In particular, for $d=2$, the dependence is reduced from $\mathbb{E}_{g}[N_{1}(n,2)]=\Theta(n^2)$ to $\mathbb{E}_{g}[N_{2}(n,2)]=\Theta(n\log n)$.

% DISCUSS THE ABOVE THEOREM BRIEFLY

% ZHENYUAN: DONE.

\section{Numerical Experiments\label{sec:num}}
We present numerical results to validate our theory and demonstrate the performances of our IW-SAA. Section \ref{sec:test} first verifies the consistency and $\sqrt{n}$ convergence rate of the IW-SAA optimal value for the one-dimensional case, and suggests a similar rate even for the multidimensional case. Section \ref{sec:inf} investigates the influence of the choice of sampling distribution. Section \ref{sec:GMP} demonstrates the capability of our IW-SAA in solving a complicated multidimensional orthounimodal distributional optimization problem with more than 50 constraints. Finally, Section \ref{sec:rare} uses our IW-SAA to solve shape-constrained distributional optimization motivated from rare-event estimation. Each experiment is replicated 100 times without further clarification.

\subsection{Validation of Convergence Rates\label{sec:test}}
% REVISE THE FOCUS AND REWRITE. WE DON'T WANT TO COMPARE. INSTEAD, WE WANT TO FIRST VALIDATE OUR THEORY ON CONSISTENCY AND CONVERGENCE RATE FOR UNIMODAL. THIS IS AN IMPORTANT RESULT FIRST BY ITSELF. THEN WE WANT TO DEMONSTRATE THAT THE THEORY ALSO APPEARS TO MATCH FOR OU, AND THAT MORE SO THE RESULTS SUGGEST THAT THE CANONICAL RATE HOLDS FOR OU AS WELL EVEN THOUGH THERE'S NO THEORY.

% ZHENYUAN: REVISED THIS SECTION.  $\sqrt{n}$ convergence rate in Corollary \ref{1D_compact}  to see how fast the IW-SAA of orthounimodal distributional optimization may be

We first use a relatively simple unimodal distributional optimization problem to validate our theoretical consistency and convergence rate. Then we investigate these convergence behaviors for a 4-dimensional orthounimodal distributional optimization problem. Specifically, we consider the following two problems:%
\[%
\begin{array}[t]{r}%
\sup\\
\text{subject to}%
\end{array}%
\begin{array}[t]{l}%
\mathbb{P}_{f}(-3\leq X\leq3)\\
\mathbb{P}_{f}(-5\leq X\leq5)=1\\
\mathbb{P}_{f}(-1\leq X\leq1)=1/4\\
\mathbb{E}_{f}[XI(|X|\leq5)]=0\\
\mathbb{E}_{f}[X^{2}I(|X|\leq5)]\leq6\\
f\text{ is unimodal about }0\text{ on }\mathcal{X}_{1}\\
0\leq f\leq1
\end{array}%
\begin{array}[t]{r}%
\sup\\
\text{subject to}%
\end{array}%
\begin{array}[t]{l}%
\mathbb{P}_{f}(||X||\leq2,X\in\mathbb{R}_{+}^{4})\\
\mathbb{P}_{f}(||X||\leq2.5,X\in\mathbb{R}_{+}^{4})=1\\
\mathbb{P}_{f}(||X||\leq1.5,X\in\mathbb{R}_{+}^{4})=1/5\\
f\text{ is orthounimodal about }\mathbf{0}\text{ on }\mathcal{X}_{2}\\
0\leq f\leq1/4
\end{array}
\]
where $\mathcal{X}_{1}=[-5,5]$, $\mathcal{X}_{2}=\{x\in\mathbb{R}_{+}^{4}:||x||\leq2.5\}$ and $||\cdot||$ denotes the usual Euclidean norm. For these problems, by calculus and direct usage of the shape properties, we can obtain their optimal values, with the first one being $3/4$ attained by the uniform distribution on $[-4,4]$, and the second one being $256/405$ attained by the uniform distribution on $\{x\in\mathbb{R}_{+}^{4}:||x||\leq1.5\sqrt[4]{5}\}$. On the other hand, note that both problems cannot be handled by existing Choquet-based solutions. The first problem contains inequality moment constraints together with the density constraint $0\leq f\leq 1$, while the second problem violates both the epigraph requirements for the objective and the hyperrectangle requirements for the moment functions; see Table \ref{table_conditions}. 
% TRUE? YES.

% (IN FACT, IS IT REALLY THAT BIG A DEAL OR IT CAN BE EASILY EXTENSIBLE? IF IT CAN BE EASILY EXTENSIBLE, THEN WE SHOULD DISCUSS AND NOT COUNT IT AS OUT OF SCOPE OF THE PREVIOUS WORK)(ZHENYUAN: YOU ARE RIGHT. THEY SAY THEIR FORMULATION CAN EXTEND TO INEQUALITY CONSTRAINTS. I CHANGED THE TABLE BACK. BUT THIS FORMULATION IS STILL DIFFERENT FROM THEIRS SINCE THEY DON'T HAVE THE DENSITY CONSTRAINT $0\leq f\leq 1$), IS IT TRUE? THE "ESTIMATION ERROR" ISN'T DEFINED BEFORE SO IT'S AMBIGUOUS. (ZHENYUAN: YES.)

For IW-SAA, the sampling distributions are chosen as uniform distributions on $\mathcal{X}_{1}$ and $\mathcal{X}_{2}$ respectively. Figure \ref{SAA-DRO_comparison} shows the boxplots of the normalized estimation errors of our IW-SAA outputs against the true optimal values, i.e., $\sqrt{n}$ times the difference between the IW-SAA output and optimal value. We observe that the normalized estimation errors in both problems mostly lie in bounded regions as $n$ grows. These validate our consistency results in Corollaries \ref{1D_compact} and \ref{OU_compact} since they imply that the unnormalized estimation errors shrink to 0 as $n$ grows. More precisely, for the unimodal distributional optimization problem, bounded normalized estimation errors validate the $\sqrt{n}$ convergence rate, i.e., $\sqrt{n}(val(\mathcal{P}_n)-val(\mathcal{P})=O_{\mathbb{P}^{\ast}}(1)$ in Corollary \ref{1D_compact}. For the orthounimodal distributional optimization problem, the similar trend hints that this rate may continue to apply to this case even though no rate result has been established for this setting.

\begin{figure}[htp]
\centering

\subfloat[unimodal distributional optimization]{\includegraphics[width=0.45\textwidth]{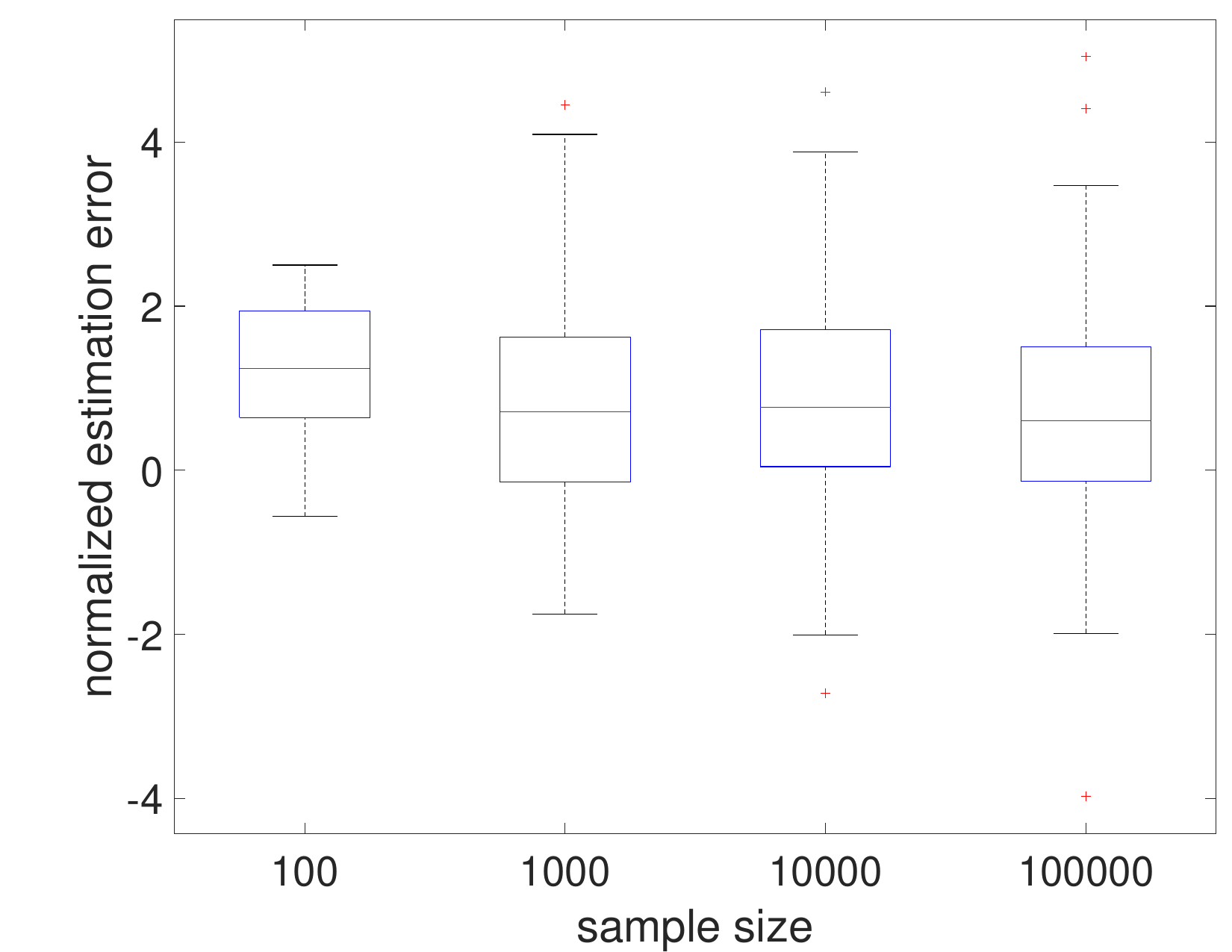}}
\subfloat[orthounimodal distributional optimization]{\includegraphics[width=0.45\textwidth]{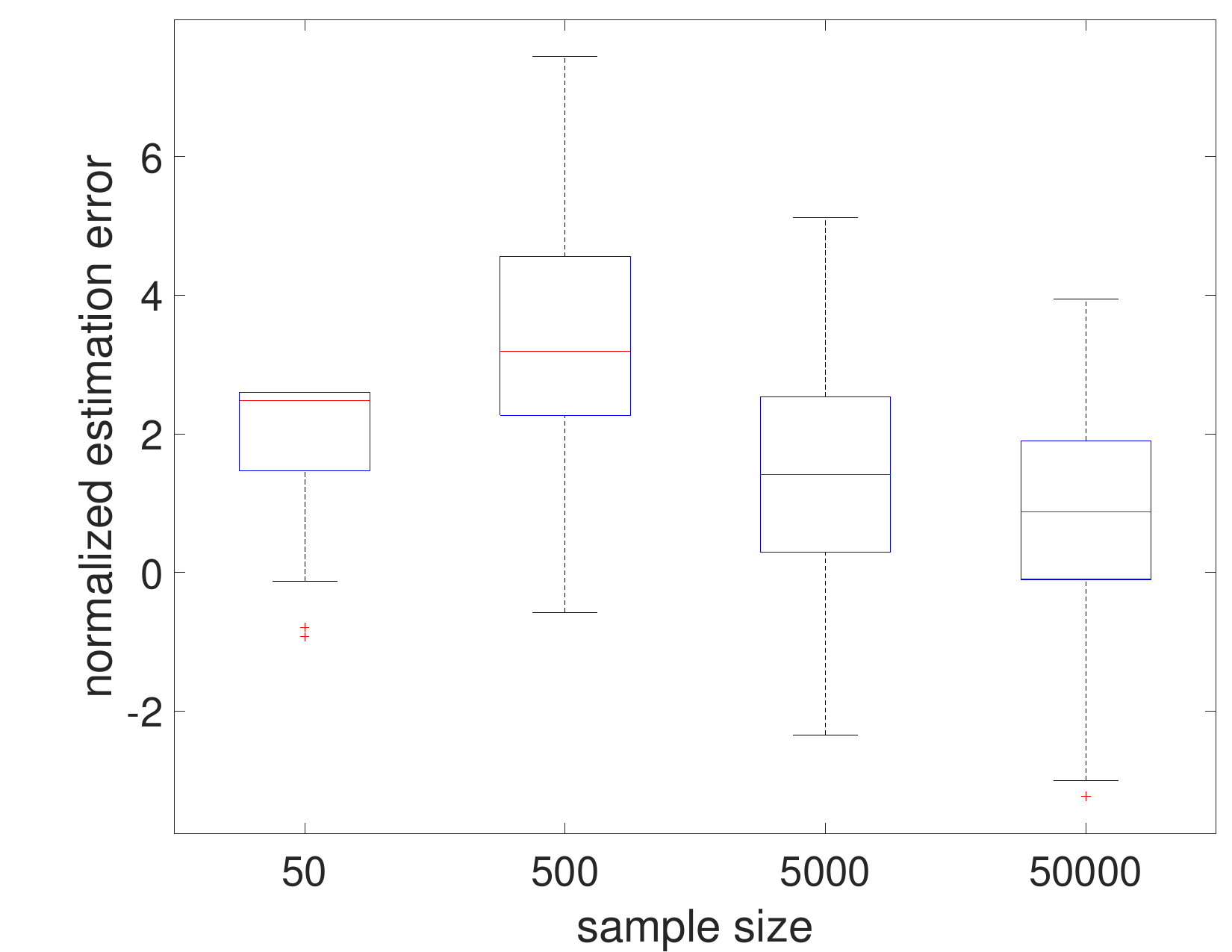}}

\caption{Normalized estimation errors of a single-dimensional unimodal and a 4-dimensional orthounimodal distributional optimization problem, for different sample sizes and over 100 experimental repetitions. 
% \DSQ{[Make axes labels and numbers bigger.]}\textcolor{magenta}{[]Zhenyuan: It seems there are no many distinctions. Maybe delete this section.]}
}
\label{SAA-DRO_comparison}
\end{figure}

\subsection{Influence of the Sampling Distribution\label{sec:inf}}

We examine the influence of the sampling distribution {$g$} on the quality of the IW-SAA optimal value. We consider the following unimodal distributional optimization problem%
\begin{align*}
\sup\text{ }  &  \mathbb{E}_{f}[e^{X}I(|X|\leq5)]\\
\text{subject to }  &  \mathbb{P}_{f}(-5\leq X\leq5)=1\\
&  \mathbb{E}_{f}[X^{i}I(|X|\leq5)]=\mu_{i},i=1,2,3,4\\
&  f\text{ is unimodal about }0\text{ on }\mathcal{X}\text{ and }0\leq f\leq2
\end{align*}
where $\mathcal{X}=[-5,5]$ and the values of $\mu_i$ are calibrated by $N(0,2)|\mathcal{X}$, with $N(0,2)|\mathcal{X}$ being the distribution of $N(0,2)$ conditional on that it is in $\mathcal{X}$ (same for the following without further clarification). We test four sampling distributions: uniform distribution on $\mathcal{X}$, $N(0,1)|\mathcal{X}$, $N(0,4)|\mathcal{X}$ and $N(0,9)|\mathcal{X}$. We use a sample size $5\times10^{4}$. Note that, like in Section \ref{sec:test}, this example cannot be handled by existing Choquet-based solutions as the objective function is not semi-algebraic; see Table \ref{table_conditions}.

Figure \ref{influence} displays the results. We observe that IW-SAA using the uniform distribution, $N(0,4)|\mathcal{X}$ and $N(0,9)|\mathcal{X}$ converge to almost the same value with the considered sample size, while IW-SAA using $N(0,1)|\mathcal{X}$ has not. On one hand, the similar convergences using $N(0,4)|\mathcal{X}$ and $N(0,9)|\mathcal{X}$ are coherent with the consistency of IW-SAA in Corollary \ref{1D_compact}. On the other hand, it appears that using $N(0,1)|\mathcal{X}$ underestimates the optimal value with as large as  $5\times10^{4}$ samples, since $N(0,1)|\mathcal{X}$ is too concentrated and cannot explore the whole domain $[-5,5]$ broadly enough.  This suggests that, in order to ensure IW-SAA results in a good-quality solution, the sampling distribution needs to be chosen suitably to be able to sufficiently explore the domain.

% (NOT SURE IF I UNDERSTAND THIS). (ZHENYUAN: THAT SENTENCE SHOULD BE REMOVED.)
% In particular, the objective value under this sampling distribution is around 2.67 (which is not $val(\mathcal{P})$ but just an objective value at one feasible solution) 

% \DSQ{I think this truncation needs more explanation.} 

\begin{figure}[htp]
\centering
\includegraphics[width=0.5\textwidth]{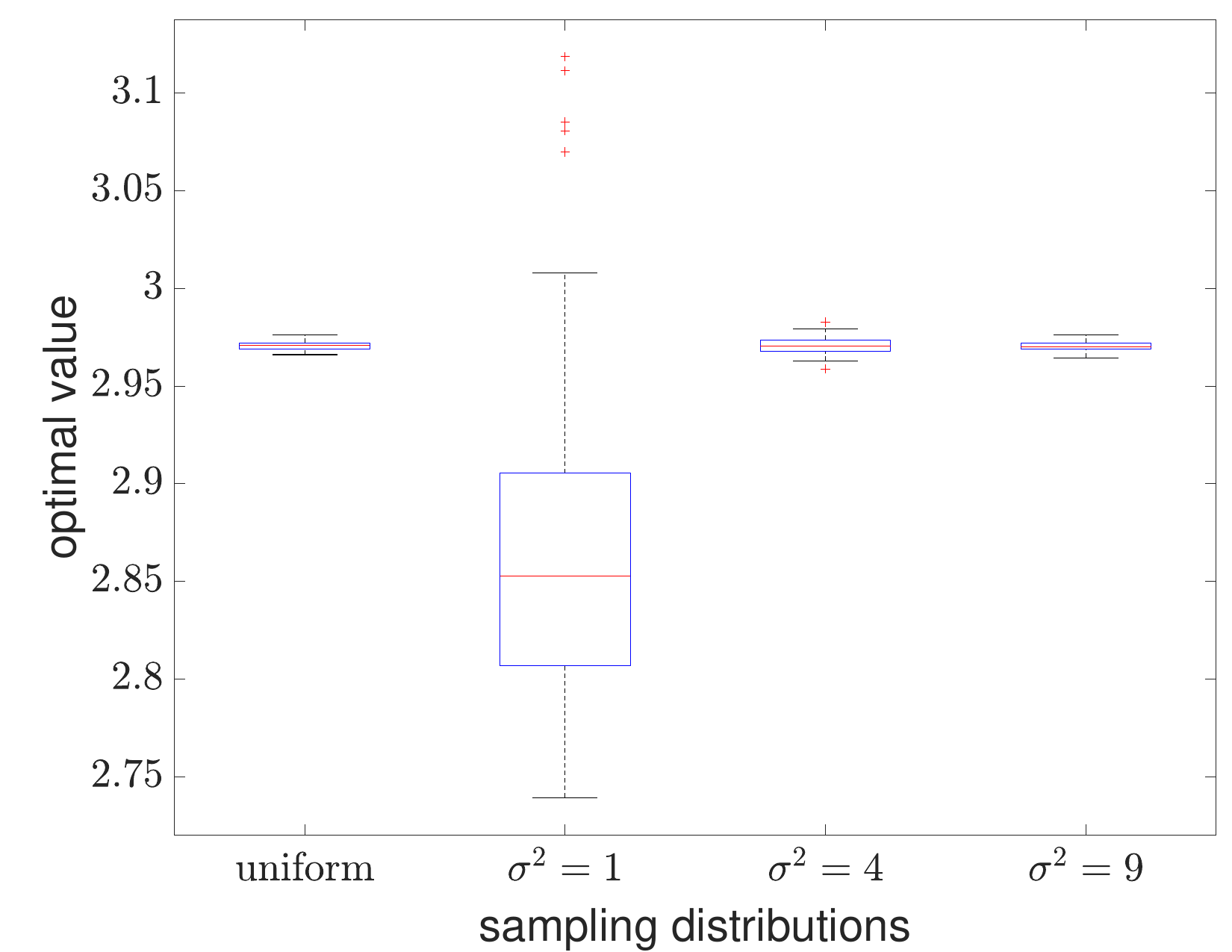}
\caption{Optimal values obtained from different sampling distributions.
% \DSQ{[Maybe include the true optimal here? Also I am not sure why each replication has a boxplot. This problem was a little confusing to me.]}\textcolor{magenta}{(Zhenyuan: Still waiting for additional numerical experiments. As explained at the beginning, we don't know the true optimal value. Why each replication has a boxplot: because there are two kinds of randomness in this problem. For each replication, we generate a new dataset. For each dataset, we calibrate the constraints and run the SAA. Since SAA method itself has randomness, we have a boxplot for each replication.)}
}
\label{influence}
\end{figure}

\subsection{High-Dimensional Many-Constraints Orthounimodal Distributional Optimization\label{sec:GMP}}
While the previous two subsections have exemplified problems that cannot be solved by existing Choquet-based methods, some of these problems could still be viewed as somewhat simple as we can find other means to obtain their true optimal values. In this section, we consider a significantly more complicated 10-dimensional orthounimodal distributional optimization problem with more than 50 moment constraints. We demonstrate how IW-SAA is capable of solving it. Specifically, consider the following problem:
\begin{align*}
\sup\text{ }  &  \mathbb{E}_{f}\left[  \left(  \sum_{i=1}^{10}X_{i}\right)I(X\in\mathcal{X})\right] \\
\text{subject to }  &  \mathbb{P}_{f}(X\in\mathcal{X})=\mu_{1}\\
&  \mathbb{P}_{f}(0\leq X_{i}\leq1/2,X\in\mathcal{X})=\mu_{2},i=1,\ldots,10\\
&  \mathbb{P}_{f}(0\leq X_{i}\leq1,X\in\mathcal{X})=\mu_{3},i=1,\ldots,10\\
&  \mathbb{E}_{f}[X_{i}^{j}I(X\in\mathcal{X})]=\mu_{3+j},i=1,\ldots,10,1\leq j\leq3\\
&  f\text{ is orthounimodal about }\mathbf{0}\text{ on }\mathcal{X}\text{ and }0\leq f\leq(Mg_{0})^{2}%
\end{align*}
where $X=(X_{1},\ldots,X_{10})$, $\mathcal{X}=[0,\infty)^{10}$, $\mu_{j}$ are calibrated by the true density $f_{0}\sim N(\mathbf{0},16I_{10\times10})$ ($I_{10\times10}$ is the identity matrix in $\mathbb{R}^{10}$), $g_{0}$ is the density of $N(\mathbf{0},32I_{10\times10})|\mathcal{X}$ and $M=2\max_{x\in\mathcal{X}}\sqrt{f_{0}(x)}/g_{0}(x)=2^{17/2}\pi^{5/2}$. We use $10^{5}$ samples from $g_{0}$ to drive the IW-SAA problem. The result is displayed in Figure \ref{GMP}. We see that the range of the boxplot is quite small, suggesting the convergence of our IW-SAA and coherence with Corollary \ref{OU_unbounded}. This result demonstrates that our IW-SAA is able to solve high-dimensional many-constraints distributional optimization problems. 
% WE CANNOT SHOW NUMERICS WITHOUT ANY DISCUSSION (ZHENYUAN: ADDED THE DISCUSSION.)
% We can see $f_{0}$ is a feasible solution with objective value around $4.17\times 10^{-3}$. 
% From the results, we can see the DRO optimal value indeed provides an upper bound of the objective value under $f_{0}$.

\begin{figure}[htp]
\centering
\includegraphics[width=0.5\textwidth]{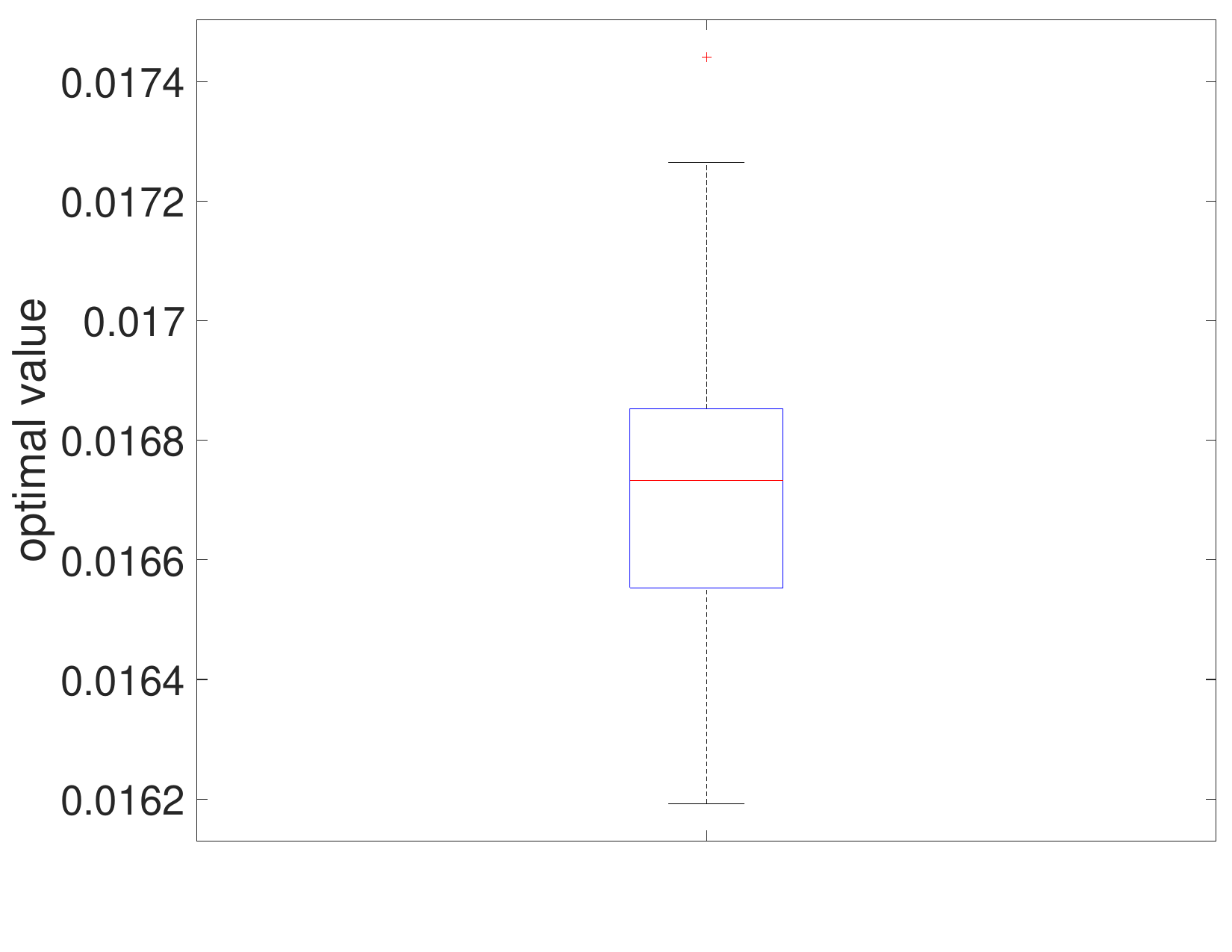}
\caption{Optimal values obtained in a 10-dimensional orthounimodal distributional optimization problem with more than 50 constraints.
% \DSQ{[Maybe include the true optimal here? Also I am not sure why each replication has a boxplot. This problem was a little confusing to me.]}\textcolor{magenta}{(Zhenyuan: Still waiting for additional numerical experiments. As explained at the beginning, we don't know the true optimal value. Why each replication has a boxplot: because there are two kinds of randomness in this problem. For each replication, we generate a new dataset. For each dataset, we calibrate the constraints and run the SAA. Since SAA method itself has randomness, we have a boxplot for each replication.)}
}
\label{GMP}
\end{figure}

\subsection{Extreme Event Analysis\label{sec:rare}}

In this experiment, we use our IW-SAA to solve a shape-constrained distributional optimization problem motivated from extreme event estimation. Suppose we have samples $(X_{1,k},\ldots X_{10,k}),k=1,\ldots,10^{7}$ from the unknown true distribution $N(\mathbf{0},16I_{10\times10})$. Suppose we want to estimate the 10-dimensional rare-event probability $\mathbb{P}(\min_{1\leq i\leq10}X_{i}\geq 3,\max_{1\leq i\leq10}X_{i}\geq12)$ whose true value is around $2.07\times 10^{-8}$. The empirical distribution cannot be used to estimate such a tiny probability because typically none of $10^{7}$ samples will fall into target region. Instead, following the argument in \cite{lam2021orthounimodal}, we find an upper bound on the true probability by imposing 
% shape-constrained distributional optimization to. We formulate 
the following orthounimodal distributional optimization problem%
\begin{align*}
\sup\text{ }  &  \mathbb{P}_{f}\left(  \min_{1\leq i\leq10}X_{i}\geq3,\max_{1\leq i\leq10}X_{i}\geq12\right) \\
\text{subject to }  &  l_{0}\leq \mathbb{P}_{f}(X\in\mathcal{X})\leq u_{0}\\
&  l_{i,j}\leq \mathbb{P}_{f}(1\leq X_{i}\leq\hat{q}_{i}(j/10),X\in\mathcal{X})\leq u_{i,j},i=1,\ldots,10,j=1,\ldots,9\\
&  l_{11,j}\leq \mathbb{P}_{f}\left(  1\leq\min_{1\leq i\leq10}X_{i}\leq\hat{q}_{11}(j/10),X\in\mathcal{X}\right)  \leq u_{11,j},j=1,\ldots,9\\
&  l_{12,j}\leq \mathbb{P}_{f}\left(  1\leq\max_{1\leq i\leq10}X_{i}\leq\hat{q}_{12}(j/10),X\in\mathcal{X}\right)  \leq u_{12,j},j=1,\ldots,9\\
&  f\text{ is orthounimodal about }\mathbf{1}\text{ on }\mathcal{X}\text{ and }0\leq f\leq(Mg_{0})^{2}%
\end{align*}
where $X=(X_{1},\ldots,X_{10})$, $\mathcal{X}=[1,\infty)^{10}$, $g_{0}$ is the density of $N(\mathbf{1},32I_{10\times10})|\mathcal{X}$ and $M=2^{17/2}\pi^{5/2}$. The moment constraints can be calibrated via the confidence intervals constructed from data. In particular, the first one $l_{0}\leq \mathbb{P}_{f}(X\in\mathcal{X})\leq u_{0}$ is directly obtained from the usual two-sided normal confidence interval, while the remaining ones are calibrated from the Kolmogorov--Smirnov (KS) two-sided statistic that gives a simultaneous confidence band for $\mathbb{P}_{f}(1\leq X_{1}\leq x|X\in\mathcal{X}),\forall x\geq1$ using the conditional empirical distribution of $X_{1,k}$ given $(X_{1,k},\ldots X_{10,k})\in\mathcal{X}$. Then taking $x=\hat{q}_{1}(j/10),j=1,\ldots,9$ and multiplying the confidence bounds of $\mathbb{P}_{f}(X\in\mathcal{X})$ leads to the desired confidence intervals in the DRO constraints, where $\hat{q}_{1}(p)$ is the $100p\%$-quantile of the conditional empirical distribution of $X_{1,k}$. Other constraints are calibrated in the same way where $\hat{q}_{i}(\cdot),i=1,\ldots,12$ are the quantiles of the conditional empirical distributions of $X_{i,k},i=1,\ldots,10$, $\min_{1\leq i\leq10}X_{i,k}$ and $\max_{1\leq i\leq10}X_{i,k}$. In particular, we calibrate each constraint with level $1-\alpha/13$ and the Bonferroni correction ensures that all moment constraints simultaneously hold with level $1-\alpha$ ($\alpha=0.05$ in our experiment). 
% As for the shape constraint, we can verify it by the kernel density estimation at least near the mode $\mathbf{1}$. The orthounimodality constraint can be checked by plotting the contours of the estimated density $\hat{f}$ and checking if the density is non-increasing as required by orthounimodality. $f\leq(Mg_{0})^{2}$ can be checked by seeing if $\hat{f}\leq(Mg_{0})^{2}$ holds.

% \DSQ{[Also need to define $\hat{q}_{11}$ and $\hat{q}_{12}$? Zhenyuan: Yes, define them in the next sentence.]} \ref{data_driven} 

We independently generate 20 different data sets. For each data set, we calibrate the constraints as in the above and run IW-SAA using $10^5$ samples from $g_{0}$ 20 times. Figure \ref{data_driven} shows the boxplot of the IW-SAA optimal values (the $x$-axis denotes different data sets and the $y$-axis denotes different IW-SAA runs for each data set). We see that the IW-SAA optimal values deliver an upper bound the true rare-event probability. Moreover, considering the tiny magnitude of the the true objective value, the estimated values appear reasonably tight.
% are arguably not overly conservative  .
% has such a tiny value.

%Once the constraints are calibrated, we run the SAA approach using $10^5$ samples from $g_{0}$. We repeat the procedure using 20 independently generated data sets. Figure \ref{data_driven} shows the boxplot of the SAA optimal values (the $x$-axis denotes different data sets). We can see the SAA optimal values are indeed upper bounds of the true rare event probability and they are not too conservative considering such a tiny true probability.

\begin{figure}[htp]
\centering
\includegraphics[width=0.75\textwidth]{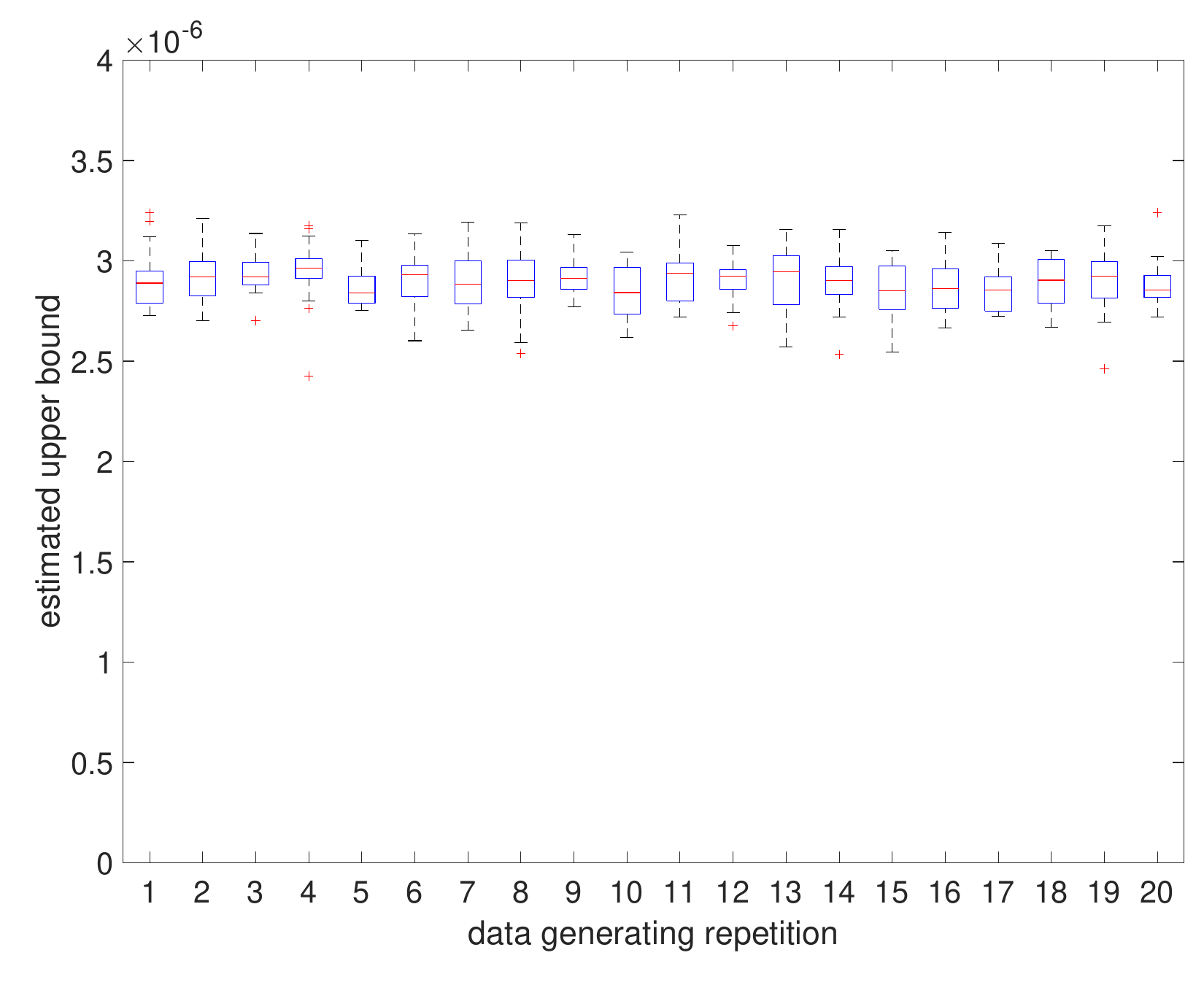}
\caption{Estimated upper bounds for a 10-dimensional extreme event estimation problem over 20 experimental repetitions.
% \DSQ{[Maybe include the true optimal here? Also I am not sure why each replication has a boxplot. This problem was a little confusing to me.]}\textcolor{magenta}{(Zhenyuan: Still waiting for additional numerical experiments. As explained at the beginning, we don't know the true optimal value. Why each replication has a boxplot: because there are two kinds of randomness in this problem. For each replication, we generate a new dataset. For each dataset, we calibrate the constraints and run the SAA. Since SAA method itself has randomness, we have a boxplot for each replication.)}
}
\label{data_driven}
\end{figure}

% ADD CONCLUSION AND DISCUSSION. FOR DISCUSSION, IT WOULD BE APPEALING TO AT LEAST GIVE SOME HINTS ON HOW TO GENERALIZE OUR APPROACH TO THE SHAPES THAT WE CANNOT HANLDE AT THE MOMENT

% ZHENYUAN: DONE.

\section{Conclusion and Discussion}

This paper proposes a new approach, which we call IW-SAA, to solve shape-constrained distributional optimization that arises prominently in DRO. Our approach is motivated from the tractability challenges faced by the existing Choquet-based method, which uses a mixture representation whose resulting solvability is confined by the geometries and the compatibility between the shape and the objective function and moment constraints. Instead of using such a mixture idea, IW-SAA uses a change of measure to convert the distributional decision variable into likelihood ratio with respect to a known sampling distribution, and uses SAA to approximate the objective and constraints. In order to analyze the consistency and canonical convergence rate of our approach, we develop a strong duality theory to remedy the mismatch of feasibility between original and sampled problems, and couple the duality with empirical processes, where the geometries of shape constraints are shown to play a critical role in controlling the corresponding function class complexities.  We apply IW-SAA to various one-dimensional shape-constrained distributional optimization and a recent multidimensional orthounimodal distributional optimization, and show that these formulations can be reduced to finite-dimensional linear programs and exhibit desirable convergence behaviors. 
% Compared to the existing literature, our approach has much fewer requirements on the problem formulation and thus significantly enlarges the scope of solvable shape-constrained distributional optimization. Consistency and canonical convergence rate of IW-SAA are established by means of strong duality and empirical process theory.

 % sampling from this distribution and replacing the expectations in the problem formulation by the sample averages  to better understand the behavior of the normalized estimation error
 
We view our study as a starting point in researching this new IW-SAA approach to tackle shape-constrained optimization. While our IW-SAA applies to a range of shapes, it also incurs limitations in some contexts, which prompt several immediate future directions. First, convexity (unbounded domain) and block unimodality lack finite-dimensional reducibility although they have statistical guarantees. To make IW-SAA applicable, an approach is to develop tractable reduction of the current infinite-dimensional sample problems by possibly leveraging specific function structure and shapes. Second, the convergence rate of IW-SAA for orthounimodality is still open, even though the numerical results in Section \ref{sec:test} hint that it could continue to be canonical. To either prove or disprove this conjecture, a more refined maximal inequality for empirical processes needs to be developed. Third, we have proved that IW-SAA for $\alpha$-unimodality is not consistent. Hence, solving $\alpha$-unimodal distributional optimization may still rely on the Choquet representation theorem like the existing literature. However, it is possible that by injecting further suitable constraints into the formulation that reduces the corresponding function class complexity, we can obtain consistency or even canonical convergence rates for such shape-constrained problems.
% , or even a new technique that has not been found out yet.

%\section{Acknowledgements}
% \textcolor{red}{Acknowledgements}

% Acknowledgments here
\section*{Acknowledgements}
We gratefully acknowledge support from the InnoHK initiative, the Government of the HKSAR, Laboratory for AI-Powered Financial Technologies, and Columbia Innovation Hub grant. We would also like to thank Raghu Pasupathy and Johannes Royset for their very kind suggestions that have helped improve this paper.

\bibliographystyle{plainnat}
\bibliography{references.bib}

\clearpage

\newpage

\begin{appendix}

\section{Measurability}\label{sec:remedy}
% \DSQ{[Moved all the measurability definitions to this section.]} \DS{
We employ the outer and inner probability and expectation to handle the measurability issue (see \cite{van1996weak} Section 1.2). Let $(\Omega,\mathcal{G},\mathbb{P})$ be a probability space. The outer probability $\mathbb{P}^{\ast}$ and the inner probability $\mathbb{P}_{\ast}$ of an arbitrary subset $B\subset\Omega$ are defined as%
\[
\mathbb{P}^{\ast}(B)=\inf\{\mathbb{P}(A):A\supset B\text{, }A\in\mathcal{G}\},\quad \mathbb{P}_{\ast}(B)=\sup\{\mathbb{P}(A):A\subset B\text{, }A\in\mathcal{G}\}.
\]
The inner and outer probability are related via $\mathbb{P}_{\ast}(B)=1-\mathbb{P}^{\ast}(B^{c})$, where $B^{c}$ denotes the complement of $B$. Similarly, for an arbitrary map $T:\Omega\mapsto\mathbb{\bar{R}}:=\mathbb{R}\cup\{\pm\infty\}$, its outer expectation is defined as%
\[
\mathbb{E}^{\ast}[T]=\inf\{\mathbb{E}[U]:U\geq T,U:\Omega\mapsto\mathbb{\bar{R}}\text{ measurable and }\mathbb{E}[U]\text{ is well-defined on }\mathbb{\bar{R}}\}.
\]
The inner expectation can be defined similarly. For the map $T$, there is a measurable map (unique up to a null set) $T^{\ast}:\Omega\mapsto\mathbb{\bar{R}}$ called the minimal measurable majorant of $T$ satisfying (i) $T^{\ast}\geq T$; (ii) $T^{\ast}\leq U$ a.s. for any measurable map $U:\Omega\mapsto\mathbb{\bar{R}}$ with $U\geq T$ a.s.; (iii) $\mathbb{E}^{\ast}[T]=\mathbb{E}[T^{\ast}]$ provided $\mathbb{E}[T^{\ast}]$ exists. The existence of $T^{\ast}$ can be found in \cite{van1996weak} Lemma 1.2.1. It also satisfies $\mathbb{P}^{\ast}(T>x)=\mathbb{P}(T^{\ast}>x)$ for any $x\in\mathbb{R}$. 

We next introduce stochastic convergence under the outer expectation (\cite{van1996weak} Section 1.9). Let $X_{n},X:\Omega\mapsto\mathbb{\bar{R}}$ be arbitrary maps. We say $X_{n}$ converges outer almost surely to $X$, denoted $X_{n}\overset{\text{a.s.*}}{\rightarrow}X$, if $|X_{n}-X|^{\ast}\rightarrow0$ a.s., where $|X_{n}-X|^{\ast}$ is the minimal measurable majorant of $|X_{n}-X|$. We say $X_{n}$ converges in outer probability to $X$, denoted $X_{n}\overset{\mathbb{P}^*}{\rightarrow}X$, if $|X_{n}-X|^{\ast}\rightarrow0$ in probability, which is equivalent to $\mathbb{P}(|X_{n}-X|^{\ast}>\varepsilon)=\mathbb{P}^{\ast}(|X_{n}-X|>\varepsilon)\rightarrow0$ for any $\varepsilon>0$. Clearly, $X_{n}\overset{\text{a.s.*}}{\rightarrow}X$ implies $X_{n}\overset{\mathbb{P}^*}{\rightarrow}X$.

\section{Proofs}\label{sec:SAA proofs}

% AT END OF PROOFS, ADD  FOR THE SAKE OF READABILITY.

% ZHENYUAN: DONE.

\begin{proof}[Proof of Theorem \ref{strong_duality}.]
We first prove weak duality $val(\mathcal{P})\leq val(\mathcal{D})$. For any feasible solution $L\in\mathcal{L}$ of the problem {($\mathcal{P}$)} (\ref{DRO_g}), we have
\begin{equation}
\mathbb{E}_{g}[\phi_{j}(X)L(X)]\leq\mu_{j},j=1,\ldots,m,\quad \mathbb{E}_{g}[\phi_{j}(X)L(X)]=\mu_{j},j=m+1,\ldots,l. \label{constraints_in_proof}%
\end{equation}
Thus, for any $\lambda\in\mathbb{R}_{+}^{m}\times\mathbb{R}^{l-m}$, we have%
\begin{align*}
\mathbb{E}_{g}[\phi_{0}(X)L(X)]
&  \leq \mathbb{E}_{g}[\phi_{0}(X)L(X)]-\sum_{j=1}^{l}\lambda_{j}(\mathbb{E}_{g}[\phi_{j}(X)L(X)]-\mu_{j})\\
&  \leq\sup_{L\in\mathcal{L}}\text{ }\left\{  \mathbb{E}_{g}[\phi_{0}(X)L(X)]-\sum_{j=1}^{l}\lambda_{j}(\mathbb{E}_{g}[\phi_{j}(X)L(X)]-\mu_{j})\right\}  .
\end{align*}
Taking the supremum over the feasible solutions of the problem {($\mathcal{P}$)} (\ref{DRO_g}), we have
\[
val(\mathcal{P})\leq\sup_{L\in\mathcal{L}}\text{ }\left\{  \mathbb{E}_{g}[\phi_{0}(X)L(X)]-\sum_{j=1}^{l}\lambda_{j}(\mathbb{E}_{g}[\phi_{j}(X)L(X)]-\mu_{j})\right\}  .
\]
Then we take the infimum over $\lambda\in\mathbb{R}_{+}^{m}\times\mathbb{R}^{l-m}$ on the right hand side and get $val(\mathcal{P})\leq val(\mathcal{D})$.

Next, we prove strong duality. Notice that if $val(\mathcal{P})=\infty$, then weak duality implies strong duality. Also, $val(\mathcal{P})>-\infty$ must hold because Assumption \ref{interior_point} ensures the existence of a feasible solution. So in the following, we assume $val(\mathcal{P})\in\mathbb{R}$. We will prove $val(\mathcal{P})\geq val(\mathcal{D})$. We define a set%
\begin{align*}
\mathcal{C}=  &  \{(r,s,t)\in\mathbb{R}\times\mathbb{R}^{m}\times\mathbb{R}^{l-m}:\exists~L\in\mathcal{L}\text{ s.t. }\mathbb{E}_{g}[\phi_{0}(X)L(X)]\geq r,\mathbb{E}_{g}[\phi_{j}(X)L(X)]-\mu_{j}\leq s_{j},\\
&  j=1,\ldots,m,\mathbb{E}_{g}[\phi_{j+m}(X)L(X)]-\mu_{j+m}=t_{j},j=1,\ldots,l-m\}.
\end{align*}
By Assumption \ref{convex_feasible_set}, we can see $\mathcal{C}$ is a convex set. Note that $(val(\mathcal{P}),\mathbf{0},\mathbf{0})$ is a boundary point of $\mathcal{C}$. By the supporting hyperplane theorem, there exists $(\lambda_{0},\lambda_{1},\lambda_{2})\neq\mathbf{0}$ in $\mathbb{R}\times\mathbb{R}^{m}\times\mathbb{R}^{l-m}$ such that%
\begin{equation}
\lambda_{0}val(\mathcal{P})\geq\sup_{(r,s,t)\in\mathcal{C}}(\lambda_{0}r-\lambda_{1}^{\top}s-\lambda_{2}^{\top}t). \label{supporting_hyperplane}%
\end{equation}
We claim that $\lambda_{1}\geq\mathbf{0}$ (component-wise) and $\lambda_{0}>0$. First, if $\lambda_{1}\geq\mathbf{0}$ doesn't hold, we assume without loss of generality that $\lambda_{1,1}<0$. We choose any feasible solution $L\in\mathcal{L}$ satisfying (\ref{constraints_in_proof}), then by setting $r=\mathbb{E}_{g}[\phi_{0}(X)L(X)]$, $s=(s_{1},\mathbf{0})$ (with $s_{1}>0$), $t=\mathbf{0}$ we have $(r,s,t)\in\mathcal{C}$. Letting $s_{1}\rightarrow\infty$, we can see $\sup_{(r,s,t)\in\mathcal{C}}(\lambda_{0}r-\lambda_{1}^{\top}s-\lambda_{2}^{\top}t)=\infty$, which contradicts (\ref{supporting_hyperplane}). Second, if $\lambda_{0}>0$ doesn't hold, we have either $\lambda_{0}<0$ or $\lambda_{0}=0$. If $\lambda_{0}<0$, we know that $(r,\mathbf{0},\mathbf{0})\in\mathcal{C}$ for $r<val(\mathcal{P})$. Letting $r\rightarrow-\infty$, we obtain $\sup_{(r,s,t)\in\mathcal{C}}(\lambda_{0}r-\lambda_{1}^{\top}s-\lambda_{2}^{\top}t)=\infty$ again, which contradicts (\ref{supporting_hyperplane}). If $\lambda_{0}=0$, we consider two cases depending on $\lambda_{1}$: $\lambda_{1}=\mathbf{0}$ or $\lambda_{1}\geq\mathbf{0}$ with at least one positive component. If $\lambda_{0}=0$ and $\lambda_{1}=\mathbf{0}$, we must have $\lambda_{2}\neq\mathbf{0}$ because $(\lambda_{0},\lambda_{1},\lambda_{2})\neq\mathbf{0}$. By Assumption \ref{interior_point}, we know that $t=\mathbf{0}$ is an interior point of
\[
\{(\mathbb{E}_{g}[\phi_{m+1}(X)L(X)]-\mu_{m+1},\ldots,\mathbb{E}_{g}[\phi_{l}(X)L(X)]-\mu_{l}):L\in\mathcal{L\}.}%
\]
Thus, there exists $(r_{1},s_{1},t_{1})\in\mathcal{C}$ s.t. $\lambda_{2}^{\top}t_{1}<0$. In this case (recall that $\lambda_{0}=0,\lambda_{1}=\mathbf{0}$), we have $\lambda_{0}val(\mathcal{P})=0$ but $\sup_{(r,s,t)\in\mathcal{C}}(\lambda_{0}r-\lambda_{1}^{\top}s-\lambda_{2}^{\top}t)=\sup_{(r,s,t)\in\mathcal{C}}(-\lambda_{2}^{\top}t)\geq-\lambda_{2}^{\top}t_{1}>0$, which contradicts (\ref{supporting_hyperplane}). If $\lambda_{0}=0$ and $\lambda_{1}\geq\mathbf{0}$ with at least one positive component, by Assumption \ref{interior_point}, there exists $L_{0}\in\mathcal{L}$ s.t.
\[
\mathbb{E}_{g}[\phi_{j}(X)L_{0}(X)]<\mu_{j},j=1,\ldots,m,\quad \mathbb{E}_{g}[\phi_{j}(X)L_{0}(X)]=\mu_{j},j=m+1,\ldots,l.
\]
Therefore, we have $\lambda_{0}val(\mathcal{P})=0$ (by $\lambda_{0}=0$) but
\[
\sup_{(r,s,t)\in\mathcal{C}}(\lambda_{0}r-\lambda_{1}^{\top}s-\lambda_{2}^{\top}t)=\sup_{(r,s,t)\in\mathcal{C}}(-\lambda_{1}^{\top}s-\lambda_{2}^{\top}t)\geq-\sum_{j=1}^{m}\lambda_{1,j}(\mathbb{E}_{g}[\phi_{j}(X)L_{0}(X)]-\mu_{j})>0,
\]
which contradicts (\ref{supporting_hyperplane}). Therefore, we must have $\lambda_{0}>0$.

Now by (\ref{supporting_hyperplane}), we have%
\begin{align*}
val(\mathcal{P})
&  \geq\sup_{(r,s,t)\in\mathcal{C}}\left(  r-\frac{\lambda_{1}^{\top}}{\lambda_{0}}s-\frac{\lambda_{2}^{\top}}{\lambda_{0}}t\right) \\
&  =\sup_{L\in\mathcal{L}}\left(  \mathbb{E}_{g}[\phi_{0}(X)L(X)]-\sum_{j=1}^{m}\frac{\lambda_{1,j}}{\lambda_{0}}(\mathbb{E}_{g}[\phi_{j}(X)L(X)]-\mu_{j})\right.\\
&-\left.\sum_{j=1}^{l-m}\frac{\lambda_{2,j}}{\lambda_{0}}(\mathbb{E}_{g}[\phi_{j+m}(X)L(X)]-\mu_{j+m})\right) \\
&  \geq\inf_{\lambda\in\mathbb{R}_{+}^{m}\times\mathbb{R}^{l-m}}\sup_{L\in\mathcal{L}}\text{ }\left\{  \mathbb{E}_{g}[\phi_{0}(X)L(X)]-\sum_{j=1}^{l}\lambda_{j}(\mathbb{E}_{g}[\phi_{j}(X)L(X)]-\mu_{j})\right\} \\
&  =val(\mathcal{D}).
\end{align*}
Combining it with weak duality, we have strong duality, i.e., $val(\mathcal{P})=val(\mathcal{D})$.
\end{proof}

\begin{proof}[Proof of Theorem \ref{strong_duality_SAA}.]
We notice that the proof of Theorem \ref{strong_duality} does not rely on a specific choice of $g$. Therefore, to prove {Theorem \ref{strong_duality_SAA}}, it suffices to verify the empirical version of Assumptions \ref{integrability_condition}-\ref{interior_point} where the distribution $g$ is replaced by the empirical distribution of $X_{1},\ldots,X_{n}$. Since the empirical version of Assumptions \ref{integrability_condition}-\ref{convex_feasible_set} automatically holds, weak duality automatically holds. To prove  strong duality, it remains to show the empirical version of Assumption \ref{interior_point} holds with probability approaching $1$.

We first assume that in the distributional optimization formulation, $1\leq m<l$, i.e., both the equality and inequality moment constraints exist. {The challenge will be to show there exists a feasible solution meeting the equality constraints.} By Assumption \ref{interior_point}, there exists a small $\delta>0$ s.t.%
\[
\mathbb{E}_{g}[\phi_{j}(X)L_{0}(X)]\leq\mu_{j}-\delta,j=1,\ldots,m,\quad \mathbb{E}_{g}[\phi_{j}(X)L_{0}(X)]=\mu_{j},j=m+1,\ldots,l,
\]
and the $2^{l-m}$ different vectors $(\mu_{m+1},\ldots,\mu_{l})+(\pm\delta,\ldots,\pm\delta)$ are contained in the set 
\[
\{(\mathbb{E}_{g}[\phi_{m+1}(X)L(X)],\ldots,\mathbb{E}_{g}[\phi_{l}(X)L(X)]):L\in\mathcal{L}\},
\]
say, they are achieved by $L_{1},\ldots,L_{2^{l-m}}\in\mathcal{L}$. We define $M_{1}=\max_{j=1,\ldots,m,k=1,\ldots,2^{l-m}}|\mathbb{E}_{g}[\phi_{j}(X)L_{k}(X)]-\mu_{j}|$ and $M_{2}=2+4M_1/\delta$. Now we consider the event%
\[
A_{n}=\left\{
\begin{array}[c]{l}%
\sum_{i=1}^{n}\phi_{j}(X_{i})L_{0}(X_{i})/n\leq\mu_{j}-\delta/2,j=1,\ldots,m,\\
|\sum_{i=1}^{n}\phi_{j}(X_{i})L_{0}(X_{i})/n-\mu_{j}|\leq\delta/(2M_{2}),j=m+1,\ldots,l,\\
|\sum_{i=1}^{n}\phi_{j}(X_{i})L_{k}(X_{i})/n-\mu_{j}|\leq2M_{1},j=1,\ldots,m,k=1,\ldots,2^{l-m},\\
|\sum_{i=1}^{n}\phi_{j}(X_{i})L_{k}(X_{i})/n-\mathbb{E}_{g}[\phi_{j}(X)L_{k}(X)]|<\delta/2,j=m+1,\ldots,l,k=1,\ldots,2^{l-m}.
\end{array}
\right\}  ,
\]
 By the strong law of large numbers, for almost all samples $X_{1},X_{2},\ldots$, $A_{n}$ happens when $n$ is large enough. In other words,%
\begin{equation}
\mathbb{P}\left(  \liminf_{n\rightarrow\infty}A_{n}\right)  =\mathbb{P}\left(\bigcup_{n=1}^{\infty}\bigcap_{k=n}^{\infty}A_{k}\right)  =1. \label{liminf_An}%
\end{equation}
Next, we will show on the event $A_{n}$, the empirical version of Assumption \ref{interior_point} holds. By the last requirement in $A_{n}$ and the choice of $L_{k}$, we know that there is one and only one vector
\[
\left(\sum_{i=1}^{n}\phi_{m+1}(X_{i})L_{k}(X_{i})/n,\ldots,\sum_{i=1}^{n}\phi_{l}(X_{i})L_{k}(X_{i})/n\right)
\]
among $k=1,\ldots,2^{l-m}$ in each orthant with respect to the center $(\mu_{m+1},\ldots,\mu_{l})$. Additionally, they are all outside the hyperrectangle $(\mu_{m+1},\ldots,\mu_{l})+[-\delta/2,\delta/2]^{l-m}$, that is,%
\[
\left\vert \frac{1}{n}\sum_{i=1}^{n}\phi_{j}(X_{i})L_{k}(X_{i})-\mu_{j}\right\vert >\frac{\delta}{2},j=m+1,\ldots,l,k=1,\ldots,2^{l-m}.
\]
Consequently, we know that%
\begin{align}
&  (\mu_{m+1},\ldots,\mu_{l})+[-\delta/2,\delta/2]^{l-m}\nonumber\\
&  \subset\left\{  \left(  \frac{1}{n}\sum_{i=1}^{n}\phi_{m+1}(X_{i})L(X_{i}),\ldots,\frac{1}{n}\sum_{i=1}^{n}\phi_{l}(X_{i})L(X_{i})\right):L=\sum_{k=1}^{2^{l-m}}\lambda_{k}L_{k},\sum_{k=1}^{2^{l-m}}\lambda_{k}=1,\lambda_{k}\geq0\right\} \label{inclusion_hyperrectangle}\\
&  \subset\left\{  \left(  \frac{1}{n}\sum_{i=1}^{n}\phi_{m+1}(X_{i})L(X_{i}),\ldots,\frac{1}{n}\sum_{i=1}^{n}\phi_{l}(X_{i})L(X_{i})\right):L\in\mathcal{L}\right\}  ,\nonumber
\end{align}
where the last inclusion is due to the convexity in Assumption \ref{convex_feasible_set}. This proves the interior point condition in the empirical version of Assumption \ref{interior_point}. Then it remains to show the existence of a strictly feasible solution to the IW-SAA problem. We define%
\[
v=\left(  \frac{1}{n}\sum_{i=1}^{n}\phi_{m+1}(X_{i})L_{0}(X_{i})-\mu_{m+1},\ldots,\frac{1}{n}\sum_{i=1}^{n}\phi_{l}(X_{i})L_{0}(X_{i})-\mu_{l}\right)  .
\]
According to the second requirement in $A_n$, $v\in\lbrack-\delta/(2M_{2}),\delta/(2M_{2})]^{l-m}$, which implies $-(M_{2}-1)v\in\lbrack-\delta/2,\delta/2]^{l-m}$. By (\ref{inclusion_hyperrectangle}), there exists a convex combination $\tilde{L}_{0}=\sum_{k=1}^{2^{l-m}}\lambda_{k}L_{k}$ such that%
\[
\left(  \frac{1}{n}\sum_{i=1}^{n}\phi_{m+1}(X_{i})\tilde{L}_{0}(X_{i}),\ldots,\frac{1}{n}\sum_{i=1}^{n}\phi_{l}(X_{i})\tilde{L}_{0}(X_{i})\right)=(\mu_{m+1},\ldots,\mu_{l})-(M_{2}-1)v.
\]
We claim that the convex combination $(1-1/M_{2})L_{0}+\tilde{L}_{0}/M_{2}\in\mathcal{L}$ is a strictly feasible solution for the IW-SAA problem. First, it satisfies the inequality constraints since for $j=1,\ldots,m$%
\begin{align*}
&  \frac{1}{n}\sum_{i=1}^{n}\phi_{j}(X_{i})\left(  \left(  1-\frac{1}{M_{2}}\right)  L_{0}(X_{i})+\frac{1}{M_{2}}\tilde{L}_{0}(X_{i})\right)  -\mu_{j}\\
&  =\left(  1-\frac{1}{M_{2}}\right)  \left(  \frac{1}{n}\sum_{i=1}^{n}\phi_{j}(X_{i})L_{0}(X_{i})-\mu_{j}\right)  +\frac{1}{M_{2}}\left(  \frac{1}{n}\sum_{i=1}^{n}\phi_{j}(X_{i})\tilde{L}_{0}(X_{i})-\mu_{j}\right) \\
&  =\left(  1-\frac{1}{M_{2}}\right)  \left(  \frac{1}{n}\sum_{i=1}^{n}\phi_{j}(X_{i})L_{0}(X_{i})-\mu_{j}\right)  +\frac{1}{M_{2}}\sum_{k=1}^{2^{l-m}}\lambda_{k}\left(  \frac{1}{n}\sum_{i=1}^{n}\phi_{j}(X_{i})L_{k}(X_{i})-\mu_{j}\right) \\
&  \leq-\frac{\delta}{2}\left(  1-\frac{1}{M_{2}}\right)  +\frac{1}{M_{2}}2M_{1}=\frac{-\delta}{2M_2}<0,
\end{align*}
by the first and third requirements in $A_{n}$ and the definition of $M_{2}$. It also satisfies the equality constraints since for $j=m+1,\ldots,l$
\begin{align*}
&  \frac{1}{n}\sum_{i=1}^{n}\phi_{j}(X_{i})\left(  \left(  1-\frac{1}{M_{2}}\right)  L_{0}(X_{i})+\frac{1}{M_{2}}\tilde{L}_{0}(X_{i})\right)  -\mu_{j}\\
&  =\left(  1-\frac{1}{M_{2}}\right)  \left(  \frac{1}{n}\sum_{i=1}^{n}\phi_{j}(X_{i})L_{0}(X_{i})-\mu_{j}\right)  +\frac{1}{M_{2}}\left(  \frac{1}{n}\sum_{i=1}^{n}\phi_{j}(X_{i})\tilde{L}_{0}(X_{i})-\mu_{j}\right) \\
&  =\left(  1-\frac{1}{M_{2}}\right)  {v_{j-m}}-\frac{M_{2}-1}{M_{2}}{v_{j-m}}=0.
\end{align*}
Hence, $(1-1/M_{2})L_{0}+\tilde{L}_{0}/M_{2}\in\mathcal{L}$ is a strictly feasible solution for the IW-SAA problem, which proves the empirical version of Assumption \ref{interior_point}. Therefore, the above argument and the proof of Theorem \ref{strong_duality} shows the inclusion $A_{n}\subset\{\text{empirical version of Assumptions \ref{integrability_condition}-\ref{interior_point} holds}\}\subset\{val(\mathcal{P}_{n})=val(\mathcal{D}_{n})\}$. Then we deduce that%
\[
\bigcup_{n=1}^{\infty}\bigcap_{k=n}^{\infty}A_{k}\subset\bigcup_{n=1}^{\infty}\bigcap_{k=n}^{\infty}\{val(\mathcal{P}_{k})=val(\mathcal{D}_{k})\}\equiv\{\exists N_{0}\in\mathbb{N}\text{ such that }val(\mathcal{P}_{n})=val(\mathcal{D}_{n}),\forall n\geq N_{0}\},
\]
which implies (because $val(\mathcal{P}_{n})$ may be non-measurable) $\mathbb{P}_{\ast}(\exists N_{0}\in\mathbb{N}\text{ such that }val(\mathcal{P}_{n})=val(\mathcal{D}_{n}),\forall n\geq N_{0})=1$ by (\ref{liminf_An}). Finally, by the weak law of large numbers, $\mathbb{P}(A_{n})\rightarrow1$ which implies $\mathbb{P}_{\ast}(val(\mathcal{P}_{n})=val(\mathcal{D}_{n}))\geq \mathbb{P}(A_{n})\rightarrow1$.

If there is no inequality constraint in the distributional optimization formulation, the above argument still holds by removing the part regarding the inequality constraint. If there is no equality constraint in the distributional optimization formulation, then $L_{0}$ is a strictly feasible solution (when $n$ is large) for the IW-SAA problem by the law of large numbers, which again proves the empirical version of Assumption \ref{interior_point}. This completes our proof.
\end{proof}

Now we prove Theorem \ref{consistency}. By strong duality, it suffices to consider $val(\mathcal{D}_{n})$ and $val(\mathcal{D})$. However, the outer minimization over $\lambda$ in ($\mathcal{D}_{n}$) and ($\mathcal{D}$) is conducted on a unbounded region on which the uniform law of large numbers usually fails. Thus, we first establish the following technical lemma which ensures the minimization over $\lambda$ can be conducted on a compact set under certain conditions and thus quantifies the difference between two Lagrangian dual problems.

\begin{lemma}
\label{deviation_optimal_value}Let $C(\lambda)=\sup_{i\in I}(a_{i}-\lambda^{\top}b_{i})$ and $\tilde{C}(\lambda)=\sup_{i\in I}(c_{i}-\lambda^{\top}d_{i})$ be two functions defined on $\lambda\in\mathbb{R}_{+}^{m}\times\mathbb{R}^{l-m}$ and taking values in $(-\infty,\infty]$, where $I$ is an index set with arbitrary cardinality and $a_{i},c_{i}\in\mathbb{R},b_{i},d_{i}\in\mathbb{R}^{l}$. Assume that (i) $\lambda^{\ast}\in\argmin_{\lambda\in\mathbb{R}_{+}^{m}\times\mathbb{R}^{l-m}}C(\lambda)$ with $C(\lambda^{\ast})\in(-\infty,\infty)$, (ii) $\exists~\delta>0$ s.t. for any $(j_{1},\ldots,j_{l-m})\in\{1,-1\}^{l-m}$, there exists an index $i(j_{1},\ldots,j_{l-m})\in I$ satisfying $b_{i(j_{1},\ldots,j_{l-m}),k}<0,k=1,\ldots,m$ and $b_{i(j_{1},\ldots,j_{l-m}),k}=j_{k-m}\delta,k=m+1,\ldots,l$. (iii) $\sup_{i\in I}|a_{i}-c_{i}|<\infty$ and $\sup_{i\in I}|b_{i}-d_{i}|<\eta_{1}$, where 
\[
\eta_{1}=\min_{(j_{1},\ldots,j_{l-m})\in\{1,-1\}^{l-m}}\min_{1\leq k\leq l}|b_{i(j_{1},\ldots,j_{l-m}),k}|.
\]
Then we have $\inf_{\lambda\in\mathbb{R}_{+}^{m}\times\mathbb{R}^{l-m}}\tilde{C}(\lambda)\in(-\infty,\infty)$ and%
\begin{align*}
\left\vert \inf_{\lambda\in\mathbb{R}_{+}^{m}\times\mathbb{R}^{l-m}}C(\lambda)-\inf_{\lambda\in\mathbb{R}_{+}^{m}\times\mathbb{R}^{l-m}}\tilde{C}(\lambda)\right\vert &\leq\sup_{i\in I}|a_{i}-c_{i}|+M\sup_{i\in I}|b_{i}-d_{i}|\\
&\leq\sup_{i\in I}|a_{i}-c_{i}|+M\sum_{j=1}^{l}\sup_{i\in I}|b_{ij}-d_{ij}|,
\end{align*}
where $M$ is defined as%
\[
M=\max\left(  |\lambda^{\ast}|,\frac{C(\lambda^{\ast})-\eta_{2}+2\sup_{i\in I}|a_{i}-c_{i}|+|\lambda^{\ast}|\sup_{i\in I}|b_{i}-d_{i}|}{\eta_{1}-\sup_{i\in I}|b_{i}-d_{i}|}\right)  ,
\]
and $\eta_{2}=\min_{(j_{1},\ldots,j_{l-m})\in\{1,-1\}^{l-m}}a_{i(j_{1},\ldots,j_{l-m})}.$
\end{lemma}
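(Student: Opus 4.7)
The plan is to first establish coercivity of both $C$ and $\tilde C$ using hypothesis (ii), then use coercivity to confine any minimizer to the ball $\{|\lambda|\leq M\}$, and finally compare the two functions uniformly on this ball. Throughout $|\cdot|$ on $\mathbb{R}^l$ is interpreted as in the lemma statement. For Step 1, given $\lambda\in\mathbb{R}_+^m\times\mathbb{R}^{l-m}$, choose the sign pattern $j_k=-\sign(\lambda_{m+k})$ (breaking ties arbitrarily when $\lambda_{m+k}=0$) and let $i^\dagger:=i(j_1,\ldots,j_{l-m})$ be the corresponding index from hypothesis (ii). Because $\lambda_k\geq 0$ and $b_{i^\dagger,k}<0$ for $k\leq m$, and $b_{i^\dagger,k}=j_{k-m}\delta$ has sign opposite to $\lambda_k$ for $k>m$, every term in $-\lambda^\top b_{i^\dagger}$ is nonnegative, and
\[
-\lambda^\top b_{i^\dagger}=\sum_{k=1}^m \lambda_k|b_{i^\dagger,k}|+\delta\sum_{k=m+1}^l|\lambda_k|\geq \eta_1|\lambda|.
\]
Combined with $a_{i^\dagger}\geq \eta_2$, this yields $C(\lambda)\geq \eta_2+\eta_1|\lambda|$.

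For Step 2, writing $d_{i^\dagger}=b_{i^\dagger}+(d_{i^\dagger}-b_{i^\dagger})$ and applying Cauchy-Schwarz to the perturbation gives
\[
\tilde C(\lambda)\geq c_{i^\dagger}-\lambda^\top b_{i^\dagger}-|\lambda|\sup_{i\in I}|b_i-d_i|\geq \eta_2-\sup_{i\in I}|a_i-c_i|+\bigl(\eta_1-\sup_{i\in I}|b_i-d_i|\bigr)|\lambda|.
\]
Hypothesis (iii) makes the coefficient of $|\lambda|$ strictly positive, so $\tilde C$ is coercive; being a supremum of affine functions it is also lower semicontinuous, and hence attains its infimum at some $\tilde\lambda^\ast$ with finite value. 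For Step 3, the symmetric expansion yields the pointwise upper bound $\tilde C(\lambda)\leq C(\lambda)+\sup_{i\in I}|a_i-c_i|+|\lambda|\sup_{i\in I}|b_i-d_i|$. Evaluating it at $\lambda^\ast$ and using $\tilde C(\tilde\lambda^\ast)\leq \tilde C(\lambda^\ast)$, then combining with the coercivity bound from Step 2 evaluated at $\tilde\lambda^\ast$ and isolating $|\tilde\lambda^\ast|$, produces exactly the second argument of the max in the definition of $M$. Since $|\lambda^\ast|\leq M$ trivially, both minimizers lie in $\{|\lambda|\leq M\}$.

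For Step 4, on this ball the pointwise estimate
\[
|C(\lambda)-\tilde C(\lambda)|\leq \sup_{i\in I}|(a_i-c_i)-\lambda^\top(b_i-d_i)|\leq \sup_{i\in I}|a_i-c_i|+M\sup_{i\in I}|b_i-d_i|
\]
holds, and the standard swap estimates $\inf C-\inf\tilde C\leq C(\tilde\lambda^\ast)-\tilde C(\tilde\lambda^\ast)$ and $\inf\tilde C-\inf C\leq \tilde C(\lambda^\ast)-C(\lambda^\ast)$ yield the first claimed inequality. The second inequality follows from $|b_i-d_i|\leq\sum_j|b_{ij}-d_{ij}|$. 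The main delicate point is the bookkeeping in Step 3, where the exact constants must be tracked to match the stated $M$; one also needs $\eta_1>0$, which holds because the minimum defining it is over a finite set ($2^{l-m}$ sign patterns times $l$ coordinates) of strictly positive numbers.
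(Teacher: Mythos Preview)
Your proof is correct and follows essentially the same approach as the paper's: both use the sign-pattern indices from hypothesis (ii) to obtain the coercivity bound $C(\lambda)\geq\eta_2+\eta_1|\lambda|$, transfer it to $\tilde C$ via the uniform perturbation estimate, and then restrict both infima to the ball $\{|\lambda|\leq M\}$ where the pointwise comparison yields the stated inequality. The only cosmetic difference is that you invoke lower semicontinuity to produce an actual minimizer $\tilde\lambda^\ast$ and bound it directly, whereas the paper works with the infimum and shows $\tilde C(\lambda)\geq\tilde C(\lambda^\ast)$ whenever $|\lambda|$ exceeds the second argument of the max; the resulting constants are identical.
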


\begin{proof}[Proof of Lemma \ref{deviation_optimal_value}.]
The proof includes two steps. The first step is to bound $|C(\lambda)-\tilde{C}(\lambda)|$ and $|\inf C(\lambda)-\inf\tilde{C}(\lambda)|$ when the infimum is taken over compact sets of $\lambda$. The second step is to show the optimization problem $\inf_{\lambda\in\mathbb{R}_{+}^{m}\times\mathbb{R}^{l-m}}\tilde{C}(\lambda)$ can be restricted over a compact set.

For any $i\in I$, we have $|(a_{i}-\lambda^{\top}b_{i})-(c_{i}-\lambda^{\top}d_{i})|\leq\sup_{i\in I}|a_{i}-c_{i}|+|\lambda|\sup_{i\in I}|b_{i}-d_{i}|<\infty$, i.e.,%
\begin{align*}
&-\left(  \sup_{i\in I}|a_{i}-c_{i}|+|\lambda|\sup_{i\in I}|b_{i}-d_{i}|\right)  +a_{i}-\lambda^{\top}b_{i}\\
&\leq c_{i}-\lambda^{\top}d_{i}\\
&\leq a_{i}-\lambda^{\top}b_{i}+\left(  \sup_{i\in I}|a_{i}-c_{i}|+|\lambda|\sup_{i\in I}|b_{i}-d_{i}|\right)  .
\end{align*}
By taking the supremum over $i\in I$, we have
\begin{equation}
-\left(  \sup_{i\in I}|a_{i}-c_{i}|+|\lambda|\sup_{i\in I}|b_{i}-d_{i}|\right)  +C(\lambda)\leq\tilde{C}(\lambda)\leq C(\lambda)+\left(\sup_{i\in I}|a_{i}-c_{i}|+|\lambda|\sup_{i\in I}|b_{i}-d_{i}|\right)  .
\label{upper_bound_difference_C's}%
\end{equation}
We can see the inequality (\ref{upper_bound_difference_C's}) holds even if $C(\lambda)=\infty$ or $\tilde{C}(\lambda)=\infty$ (actually it tells us $C(\lambda)=\infty\Leftrightarrow\tilde{C}(\lambda)=\infty$). Applying (\ref{upper_bound_difference_C's}) to $\lambda^{\ast}$, we get%
\begin{equation}
\tilde{C}(\lambda^{\ast})\leq C(\lambda^{\ast})+\sup_{i\in I}|a_{i}-c_{i}|+|\lambda^{\ast}|\sup_{i\in I}|b_{i}-d_{i}|<\infty.
\label{upper_bound_C_tilde_star}%
\end{equation}
When $M\geq|\lambda^{\ast}|$, (\ref{upper_bound_difference_C's}) implies that%
\begin{equation}
\left\vert \inf_{\lambda\in\mathbb{R}_{+}^{m}\times\mathbb{R}^{l-m},|\lambda|\leq M}\tilde{C}(\lambda)-\inf_{\lambda\in\mathbb{R}_{+}^{m}\times\mathbb{R}^{l-m},|\lambda|\leq M}C(\lambda)\right\vert \leq\sup_{i\in I}|a_{i}-c_{i}|+M\sup_{i\in I}|b_{i}-d_{i}|.
\label{upper_bound_difference_infC's}%
\end{equation}
By assumption (i) in this lemma, we know that $\inf_{\lambda\in\mathbb{R}_{+}^{m}\times\mathbb{R}^{l-m},|\lambda|\leq M}C(\lambda)=C(\lambda^{\ast})\in(-\infty,\infty)$. Then it follows from (\ref{upper_bound_difference_infC's}) that $\inf_{\lambda\in\mathbb{R}_{+}^{m}\times\mathbb{R}^{l-m},|\lambda|\leq M}\tilde{C}(\lambda)\in(-\infty,\infty).$

Now, let's find the lower bound for $\tilde{C}(\lambda)$ when $\lambda$ is large. For any $\lambda=(\lambda_{1},\ldots,\lambda_{l})\in\mathbb{R}_{+}^{m}\times\mathbb{R}^{l-m}$, consider $(j_{1},\ldots,j_{l-m})=(-\sign(\lambda_{m+1}),\ldots,-\sign(\lambda_{l}))$, where $\sign(x)$ is the sign function.
% \[
% \sign(x)=\left\{
% \begin{array}[c]{r}%
% 1,\\
% -1,
% \end{array}
% \left.
% \begin{array}[c]{c}%
% \text{if }x\geq0\\
% \text{if }x<0
% \end{array}
% \right.  \right.  .
% \]
By assumption (ii), there exists an index $i(j_{1},\ldots,j_{l-m})\in I$ satisfying $b_{i(j_{1},\ldots,j_{l-m}),k}<0,k=1,\ldots,m$ and $b_{i(j_{1},\ldots,j_{l-m}),k}=j_{k-m}\delta,k=m+1,\ldots,l$. Thus, we have%
\begin{align}
C(\lambda)  &  =\sup_{i\in I}(a_{i}-\lambda^{\top}b_{i})\geq a_{i(j_{1},\ldots,j_{l-m})}-\lambda^{\top}b_{i(j_{1},\ldots,j_{l-m})}\nonumber\\
&  =a_{i(j_{1},\ldots,j_{l-m})}+\sum_{k=1}^{l}|\lambda_{k}b_{i(j_{1},\ldots,j_{l-m}),k}|\geq\eta_{2}+\eta_{1}\sum_{k=1}^{l}|\lambda_{k}|\geq\eta_{2}+\eta_{1}|\lambda|, \label{lower_bound_C}%
\end{align}
where%
\[
\eta_{1}=\min_{(j_{1},\ldots,j_{l-m})\in\{1,-1\}^{l-m}}\min_{1\leq k\leq l}|b_{i(j_{1},\ldots,j_{l-m}),k}|>0,
\]%
\[
\eta_{2}=\min_{(j_{1},\ldots,j_{l-m})\in\{1,-1\}^{l-m}}a_{i(j_{1},\ldots,j_{l-m})}.
\]
By (\ref{upper_bound_difference_C's}) and (\ref{lower_bound_C}), we have
\begin{equation}
\tilde{C}(\lambda)\geq C(\lambda)-\left(  \sup_{i\in I}|a_{i}-c_{i}|+|\lambda|\sup_{i\in I}|b_{i}-d_{i}|\right)  \geq\left(  \eta_{2}-\sup_{i\in I}|a_{i}-c_{i}|\right)  +\left(  \eta_{1}-\sup_{i\in I}|b_{i}-d_{i}|\right)|\lambda|. \label{lower_bound_C_tilde}%
\end{equation}
Combining the bounds in (\ref{upper_bound_C_tilde_star}) and (\ref{lower_bound_C_tilde}), we can see if $\lambda\in\mathbb{R}_{+}^{m}\times\mathbb{R}^{l-m}$ satisfies%
\[
|\lambda|\geq\frac{C(\lambda^{\ast})-\eta_{2}+2\sup_{i\in I}|a_{i}-c_{i}|+|\lambda^{\ast}|\sup_{i\in I}|b_{i}-d_{i}|}{\eta_{1}-\sup_{i\in I}|b_{i}-d_{i}|},
\]
then $\tilde{C}(\lambda)\geq\tilde{C}(\lambda^{\ast})$. Thus, by choosing%
\[
M=\max\left(  |\lambda^{\ast}|,\frac{C(\lambda^{\ast})-\eta_{2}+2\sup_{i\in I}|a_{i}-c_{i}|+|\lambda^{\ast}|\sup_{i\in I}|b_{i}-d_{i}|}{\eta_{1}-\sup_{i\in I}|b_{i}-d_{i}|}\right)  ,
\]
we will get%
\[
\inf_{\lambda\in\mathbb{R}_{+}^{m}\times\mathbb{R}^{l-m},|\lambda|\leq M}C(\lambda)=\inf_{\lambda\in\mathbb{R}_{+}^{m}\times\mathbb{R}^{l-m}}C(\lambda),\quad\inf_{\lambda\in\mathbb{R}_{+}^{m}\times\mathbb{R}^{l-m},|\lambda|\leq M}\tilde{C}(\lambda)=\inf_{\lambda\in\mathbb{R}_{+}^{m}\times\mathbb{R}^{l-m}}\tilde{C}(\lambda),
\]
where the first equality follows from assumption (i) of this lemma and the second one follows from $\tilde{C}(\lambda)\geq\tilde{C}(\lambda^{\ast})$ when $|\lambda|\geq M$. Moreover, (\ref{upper_bound_difference_infC's}) gives us a bound for the difference of the optimal values:%
\begin{align*}
&  \left\vert \inf_{\lambda\in\mathbb{R}_{+}^{m}\times\mathbb{R}^{l-m}}C(\lambda)-\inf_{\lambda\in\mathbb{R}_{+}^{m}\times\mathbb{R}^{l-m}}\tilde{C}(\lambda)\right\vert \\
&  =\left\vert \inf_{\lambda\in\mathbb{R}_{+}^{m}\times\mathbb{R}^{l-m},|\lambda|\leq M}\tilde{C}(\lambda)-\inf_{\lambda\in\mathbb{R}_{+}^{m}\times\mathbb{R}^{l-m},|\lambda|\leq M}\tilde{C}(\lambda)\right\vert \\
&  \leq\sup_{i\in I}|a_{i}-c_{i}|+M\sup_{i\in I}|b_{i}-d_{i}|.
\end{align*}
Finally, note that $\sup_{i\in I}|b_{i}-d_{i}|\leq\sum_{j=1}^{l}\sup_{i\in I}|b_{ij}-d_{ij}|$. This concludes our proof.
\end{proof}

\begin{proof}[Proof of Theorem \ref{consistency}.]
We make use of Lemma \ref{deviation_optimal_value} by taking
\[
C(\lambda)=\sup_{L\in\mathcal{L}}\text{ }\left\{  \mathbb{E}_{g}[\phi_{0}(X)L(X)]-\sum_{j=1}^{l}\lambda_{j}(\mathbb{E}_{g}[\phi_{j}(X)L(X)]-\mu_{j})\right\}  ,
\]%
\[
\tilde{C}(\lambda)=\sup_{L\in\mathcal{L}}\text{ }\frac{1}{n}\sum_{i=1}^{n}\left(  \phi_{0}(X_{i})L(X_{i})-\sum_{j=1}^{l}\lambda_{j}(\phi_{j}(X_{i})L(X_{i})-\mu_{j})\right)  .
\]
By Assumption \ref{GC_class}, $\mathcal{F}_{j}$ have integrable envelopes (say $F_{j}(x)$) and thus $C(\lambda)$ is a real-valued convex function. Moreover, $C(\lambda)$ is a Lipschitz continuous function by the triangular inequality $|C(\lambda)-C(\lambda^{\prime})|\leq\sum_{j=1}^{l}|\lambda_{j}-\lambda_{j}^{\prime}|\mathbb{E}_{g}[F_{j}(X)]$. Now we verify the conditions (i)-(iii) in Lemma \ref{deviation_optimal_value}.

We first verify condition (ii). By Assumption \ref{interior_point}, there exists a feasible function $L_{0}\in\mathcal{L}$ s.t.
\[
\mathbb{E}_{g}[\phi_{j}(X)L_{0}(X)]<\mu_{j},j=1,\ldots,m,\quad \mathbb{E}_{g}[\phi_{j}(X)L_{0}(X)]=\mu_{j},j=m+1,\ldots,l.
\]
Additionally, $\mathbf{0}$ is an interior point of the following set%
\[
\{(\mathbb{E}_{g}[\phi_{m+1}(X)L(X)]-\mu_{m+1},\ldots,\mathbb{E}_{g}[\phi_{l}(X)L(X)]-\mu_{l}):L\in\mathcal{L\}.}%
\]
Therefore, $\exists~\delta_{1}>0$ s.t. for any $(j_{1},\ldots,j_{l-m})\in\{1,-1\}^{l-m}$, there exists $L_{(j_{1},\ldots,j_{l-m})}\in\mathcal{L}$ with $\mathbb{E}_{g}[\phi_{k}(X)L_{(j_{1},\ldots,j_{l-m})}(X)]-\mu_{k}=j_{k-m}\delta_{1},k=m+1,\ldots,l$. So we can choose $\delta_{2}>0$ small enough s.t. for any $(j_{1},\ldots,j_{l-m})\in\{1,-1\}^{l-m}$, the $k$th component of the vector%
% \begin{equation}
% (\mathbb{E}_{g}[\phi_{1}(X)[(1-\delta_{2})L_{0}(X)+\delta_{2}L_{(j_{1},\ldots,j_{l-m})}(X)]]-\mu_{1},\ldots,\mathbb{E}_{g}[\phi_{l}(X)[(1-\delta_{2})L_{0}(X)+\delta_{2}L_{(j_{1},\ldots,j_{l-m})}(X)]]-\mu_{l}) \label{vector_for_assumption2}%
% \end{equation}
\begin{equation}
(\mathbb{E}_{g}[\phi_{i}(X)[(1-\delta_{2})L_{0}(X)+\delta_{2}L_{(j_{1},\ldots,j_{l-m})}(X)]]-\mu_{i})_{i=1}^{l} \label{vector_for_assumption2}%
\end{equation}
is negative for $k=1,\ldots,m$ and equal to $j_{k-m}\delta_{1}\delta_{2}$ for $k=m+1,\ldots,l$. Additionally, $(1-\delta_{2})L_{0}(X)+\delta_{2}L_{(j_{1},\ldots,j_{l-m})}(X)\in\mathcal{L}$ since $\mathcal{L}$ is a convex set. This proves condition (ii).

Next, we verify condition (i). As in the statement of Lemma \ref{deviation_optimal_value}, we define%
\[
\eta_{1}=\min_{(j_{1},\ldots,j_{l-m})\in\{1,-1\}^{l-m}}\min_{1\leq k\leq l}|\mathbb{E}_{g}[\phi_{k}(X)[(1-\delta_{2})L_{0}(X)+\delta_{2}L_{(j_{1},\ldots,j_{l-m})}(X)]]-\mu_{k}|>0,
\]%
\[
\eta_{2}=\min_{(j_{1},\ldots,j_{l-m})\in\{1,-1\}^{l-m}}\mathbb{E}_{g}[\phi_0(X)[(1-\delta_{2})L_{0}(X)+\delta_{2}L_{(j_{1},\ldots,j_{l-m})}(X)]].
\]
Then according to (\ref{lower_bound_C}), we have $C(\lambda)\geq\eta_{2}+\eta_{1}|\lambda|$, which implies $\lim_{|\lambda|\rightarrow\infty}C(\lambda)=\infty$. By strong duality and Assumption \ref{finite_opt}, we know that $\inf_{\lambda\in\mathbb{R}_{+}^{m}\times\mathbb{R}^{l-m}}C(\lambda)\in\mathbb{R}$, which implies the existence of a minimizer $\lambda^{\ast}$ due to $\lim_{|\lambda|\rightarrow\infty}C(\lambda)=\infty$ and continuity of $C(\lambda)$.

Finally, we verify condition (iii) along with the main argument for this proposition. Notice that under our choice of $C(\lambda)$ and $\tilde{C}(\lambda)$, we actually have%
\[
\sup_{i\in I}|a_{i}-c_{i}|=||P_{n}-P_{g}||_{\mathcal{F}_{0}}\leq||P_{n}-P_{g}||_{\mathcal{F}_{0}}^{\ast},
\]%
\[
\sup_{i\in I}|b_{i}-d_{i}|\leq\sum_{j=1}^{l}\sup_{i\in I}|b_{ij}-d_{ij}|=\sum_{j=1}^{l}||P_{n}-P_{g}||_{\mathcal{F}_{j}}\leq\sum_{j=1}^{l}||P_{n}-P_{g}||_{\mathcal{F}_{j}}^{\ast}.
\]
We consider the event%
\[
\left\{  \max_{0\leq j\leq l}||P_{n}-P_{g}||_{\mathcal{F}_{j}}^{\ast}\rightarrow0\right\}\bigcap\{\exists N_{0}\in\mathbb{N}\text{ s.t. }val(\mathcal{P}_{n})=val(\mathcal{D}_{n}),\forall n\geq N_{0}\}.
\]
On this event, when $n$ is large enough, we have $val(\mathcal{P}_{n})=val(\mathcal{D}_{n})$ and $\max_{0\leq j\leq l}||P_{n}-P_{g}||_{\mathcal{F}_{j}}^{\ast}\leq\eta_{1}/(l+1)$, which implies condition (iii) in Lemma \ref{deviation_optimal_value}. Then by Lemma \ref{deviation_optimal_value}, we obtain that, on the above event and for $n$ large enough,
% (Zhenyuan: make some changes)}
\begin{align}
&  |val(\mathcal{P}_{n})-val(\mathcal{P})|\nonumber\\
&  =|val(\mathcal{D}_{n})-val(\mathcal{D})|\nonumber\\
&  \leq||P_{n}-P_{g}||_{\mathcal{F}_{0}}^{\ast}+M\sum_{j=1}^{l}||P_{n}-P_{g}||_{\mathcal{F}_{j}}^{\ast}\nonumber\\
&  \leq||P_{n}-P_{g}||_{\mathcal{F}_{0}}^{\ast}\nonumber\\
&+\max\left(  |\lambda^{\ast}|,\frac{val(\mathcal{P})-\eta_{2}+2||P_{n}-P_{g}||_{\mathcal{F}_{0}}^{\ast}+|\lambda^{\ast}|\sum_{j=1}^{l}||P_{n}-P_{g}||_{\mathcal{F}_{j}}^{\ast}}{\eta_{1}-\sum_{j=1}^{l}||P_{n}-P_{g}||_{\mathcal{F}_{j}}^{\ast}}\right)\sum_{j=1}^{l}||P_{n}-P_{g}||_{\mathcal{F}_{j}}^{\ast}.
\label{bound_opt_difference}%
\end{align}
Since the quantity in (\ref{bound_opt_difference}) is measurable and converges to $0$, we obtain that $|val(\mathcal{P}_{n})-val(\mathcal{P})|^{\ast}\rightarrow0$. Hence, we have proved that%
\begin{align*}
&\left\{  \max_{0\leq j\leq l}||P_{n}-P_{g}||_{\mathcal{F}_{j}}^{\ast}\rightarrow0\right\}\bigcap\{\exists N_{0}\in\mathbb{N}\text{ s.t. }val(\mathcal{P}_{n})=val(\mathcal{D}_{n}),\forall n\geq N_{0}\}\\
\subset&\{|val(\mathcal{P}_{n})-val(\mathcal{P})|^{\ast}\rightarrow0\}.
\end{align*}
Finally, we notice that
\begin{align*}
&  \mathbb{P}_{\ast}\left(  \left\{  \max_{0\leq j\leq l}||P_{n}-P_{g}||_{\mathcal{F}_{j}}^{\ast}\rightarrow0\right\}\bigcap\{\exists N_{0}\in\mathbb{N}\text{ s.t. }val(\mathcal{P}_{n})=val(\mathcal{D}_{n}),\forall n\geq N_{0}\}\right) \\
&  =1-\mathbb{P}^{\ast}\left(  \left\{  \max_{0\leq j\leq l}||P_{n}-P_{g}||_{\mathcal{F}_{j}}^{\ast}\rightarrow0\right\}  ^{c}\bigcup\{\exists N_{0}\in\mathbb{N}\text{ s.t. }val(\mathcal{P}_{n})=val(\mathcal{D}_{n}),\forall n\geq N_{0}\}^{c}\right) \\
&  \geq1-\mathbb{P}^{\ast}\left(  \left\{  \max_{0\leq j\leq l}||P_{n}-P_{g}||_{\mathcal{F}_{j}}^{\ast}\rightarrow0\right\}  ^{c}\right)  -\mathbb{P}^{\ast}(\{\exists N_{0}\in\mathbb{N}\text{ s.t. }val(\mathcal{P}_{n})=val(\mathcal{D}_{n}),\forall n\geq N_{0}\}^{c})\\
&  =1,
\end{align*}
where we make use of $\max_{0\leq j\leq l}||P_{n}-P_{g}||_{\mathcal{F}_{j}}^{\ast}\rightarrow0$ almost surely by Assumption \ref{GC_class} and the strong duality result in Theorem \ref{strong_duality_SAA}. Therefore, we obtain $|val(\mathcal{P}_{n})-val(\mathcal{P})|^{\ast}\rightarrow0$ almost surely, i.e., $|val(\mathcal{P}_{n})-val(\mathcal{P})|\overset{\text{a.s.*}}{\rightarrow}0.$ 
\end{proof}

\begin{proof}[Proof of Theorem \ref{convergence_rate}.]
We follow the proof of Theorem \ref{consistency} but modify the argument starting from the verification of condition (iii) of Lemma \ref{deviation_optimal_value}. We define the set $A_{n}=\{  \max_{0\leq j\leq l}||P_{n}-P||_{\mathcal{F}_{j}}<\eta_{1}/l+1\}\bigcap\{val(\mathcal{P}_{n})=val(\mathcal{D}_{n})\}$. Assumption \ref{GC_class} and Theorem \ref{strong_duality_SAA} ensure $\mathbb{P}^{\ast}(A_{n}^{c})\rightarrow0$ as $n\rightarrow\infty$. Now condition (iii) holds on $A_{n}$. By Lemma \ref{deviation_optimal_value}, the following holds on $A_{n}$:
\[
|val(\mathcal{P}_{n})-val(\mathcal{P})|=|val(\mathcal{D}_{n})-val(\mathcal{D})|\leq||P_{n}-P_{g}||_{\mathcal{F}_{0}}+M\sum_{j=1}^{l}||P_{n}-P_{g}||_{\mathcal{F}_{j}},
\]
where $M$ in Lemma \ref{deviation_optimal_value} is upper bounded by
\[
M\leq\max\left(  |\lambda^{\ast}|,\frac{(l+1)(val(\mathcal{P})-\eta_{2})+2\eta_{1}+|\lambda^{\ast}|l\eta_{1}}{\eta_{1}}\right)  :=\tilde{M}<\infty.
\]
So on $A_{n}$, we will have
\[
\sqrt{n}|val(\mathcal{P}_{n})-val(\mathcal{P})|\leq\sqrt{n}||P_{n}-P_{g}||_{\mathcal{F}_{0}}+\tilde{M}\sum_{j=1}^{l}\sqrt{n}||P_{n}-P_{g}||_{\mathcal{F}_{j}}=||G_{n}||_{\mathcal{F}_{0}}+\tilde{M}\sum_{j=1}^{l}||G_{n}||_{\mathcal{F}_{j}},
\]
i.e.,%
\[
A_{n}\subset\left\{  \sqrt{n}|val(\mathcal{P}_{n})-val(\mathcal{P})|\leq||G_{n}||_{\mathcal{F}_{0}}+\tilde{M}\sum_{j=1}^{l}||G_{n}||_{\mathcal{F}_{j}}\right\}  .
\]
Now, for any $\varepsilon>0$, we choose $K=C(1+\tilde{M}l)/\varepsilon$, where $C$ is the constant in Assumption \ref{maximal_inequality}. Then we can see%
\begin{align*}
&  \mathbb{P}^{\ast}(\sqrt{n}|val(\mathcal{P}_{n})-val(\mathcal{P})|\geq K)\\
&  \leq \mathbb{P}^{\ast}(\{\sqrt{n}|val(\mathcal{P}_{n})-val(\mathcal{P})|\geq K\}\cap A_{n})+\mathbb{P}^{\ast}(\{\sqrt{n}|val(\mathcal{P}_{n})-val(\mathcal{P})|\geq K\}\cap A_{n}^{c})\\
&  \leq \mathbb{P}^{\ast}\left(  \{\sqrt{n}|val(\mathcal{P}_{n})-val(\mathcal{P})|\geq K\}\cap\left\{  \sqrt{n}|val(\mathcal{P}_{n})-val(\mathcal{P})|\leq ||G_{n}||_{\mathcal{F}_{0}}+\tilde{M}\sum_{j=1}^{l}||G_{n}||_{\mathcal{F}_{j}}\right\}  \right) \\
& +\mathbb{P}^{\ast}(A_{n}^{c})\\
&  \leq \mathbb{P}^{\ast}\left(  ||G_{n}||_{\mathcal{F}_{0}}+\tilde{M}\sum_{j=1}^{l}||G_{n}||_{\mathcal{F}_{j}}\geq K\right)  +\mathbb{P}^{\ast}(A_{n}^{c})\\
&  \leq\frac{\mathbb{E}_{P}^{\ast}||G_{n}||_{\mathcal{F}_{0}}+\tilde{M}\sum_{j=1}^{l}\mathbb{E}_{P}^{\ast}||G_{n}||_{\mathcal{F}_{j}}}{K}+\mathbb{P}^{\ast}(A_{n}^{c})\\
&  \leq\varepsilon+\mathbb{P}^{\ast}(A_{n}^{c}).
\end{align*}
By taking the limsup, we finally get%
\[
\limsup_{n\rightarrow\infty}\mathbb{P}^{\ast}(\sqrt{n}|val(\mathcal{P}_{n})-val(\mathcal{P})|\geq K)\leq \varepsilon+\limsup_{n\rightarrow\infty}\mathbb{P}^{\ast}(A_{n}^{c})=\varepsilon.
\]

\end{proof}

\begin{proof}[Proof of Lemma \ref{general_verification}.]
We first consider the verification of Assumption \ref{GC_class}. It suffices to show each $\mathcal{F}_{j}$ has an integrable envelope and $N_{[~]}(\varepsilon,\mathcal{F}_{j},||\cdot||_{1})<\infty$ for any $\varepsilon>0$ by \cite{van1996weak} Theorem 2.4.1. Suppose (\ref{compact_applicable1}) holds. Then $\mathcal{F}_{j}$ has an integrable envelope $||F||_{\infty}|\phi_{j}(x)|/g(x)$. Next, we show $N_{[~]}(\varepsilon,\mathcal{F}_{j},||\cdot||_{1})<\infty$ for any $\varepsilon>0$. We take $\varepsilon/||\phi_{j}/g||_{1+\delta}$-brackets $[l_{i},u_{i}],i=1,\ldots,N=N_{[~]}(\varepsilon/||\phi_{j}/g||_{1+\delta},\mathcal{F},||\cdot||_{1+1/\delta})$
% \DSQ{[Not sure what distinguishes/defines the brackets with respect to $i$? Zhenyuan: This sentence means that we take $N_{[~]}(\varepsilon/||\phi_{j}/g||_{1+\delta},\mathcal{F},||\cdot||_{1+1/\delta})$ brackets indexed by $i$]} 
that cover $(\mathcal{F},||\cdot||_{1+1/\delta})$. Without loss of generality, we can assume $0\leq l_{i}\leq u_{i}\leq||F||_{\infty}$. For each $f\in\mathcal{F}$, $\exists~[l_{i},u_{i}]$ s.t. $l_{i}\leq f\leq u_{i}$. Let $\phi_{j}^{+}$ and $\phi_{j}^{-}$ be the positive and negative part of $\phi_{j}$, i.e., $\phi_{j}^{+}=\max\{\phi_{j},0\}$ and $\phi_{j}^{-}=\max\{-\phi_{j},0\}$. Then we can see%
\[
\frac{l_{i}}{g}\phi_{j}^{+}-\frac{u_{i}}{g}\phi_{j}^{-}\leq\frac{f}{g}\phi_{j}\leq\frac{u_{i}}{g}\phi_{j}^{+}-\frac{l_{i}}{g}\phi_{j}^{-}.
\]
Therefore, $[l_{i}\phi_{j}^{+}/g-u_{i}\phi_{j}^{-}/g,u_{i}\phi_{j}^{+}/g-l_{i}\phi_{j}^{-}/g],i=1,\ldots,N$ forms a cover of $(\mathcal{F}_{j},||\cdot||_{1})$. Moreover, they satisfy%
\[
\left\vert \frac{u_{i}}{g}\phi_{j}^{+}-\frac{l_{i}}{g}\phi_{j}^{-}\right\vert\leq\frac{||F||_{\infty}}{g}\phi_{j}^{+}+\frac{||F||_{\infty}}{g}\phi_{j}^{-}=||F||_{\infty}\frac{|\phi_{j}|}{g}\Rightarrow\left\Vert \frac{u_{i}}{g}\phi_{j}^{+}-\frac{l_{i}}{g}\phi_{j}^{-}\right\Vert _{1}<\infty,
\]%
\[
\left\vert \frac{l_{i}}{g}\phi_{j}^{+}-\frac{u_{i}}{g}\phi_{j}^{-}\right\vert\leq\frac{||F||_{\infty}}{g}\phi_{j}^{+}+\frac{||F||_{\infty}}{g}\phi_{j}^{-}=||F||_{\infty}\frac{|\phi_{j}|}{g}\Rightarrow\left\Vert \frac{l_{i}}{g}\phi_{j}^{+}-\frac{u_{i}}{g}\phi_{j}^{-}\right\Vert _{1}<\infty,
\]%
\begin{align*}
&  \left\vert \frac{u_{i}}{g}\phi_{j}^{+}-\frac{l_{i}}{g}\phi_{j}^{-}-\frac{l_{i}}{g}\phi_{j}^{+}+\frac{u_{i}}{g}\phi_{j}^{-}\right\vert=(u_{i}-l_{i})\frac{|\phi_{j}|}{g}\\
&  \Rightarrow\left\Vert \frac{u_{i}}{g}\phi_{j}^{+}-\frac{l_{i}}{g}\phi_{j}^{-}-\frac{l_{i}}{g}\phi_{j}^{+}+\frac{u_{i}}{g}\phi_{j}^{-}\right\Vert_{1}\leq||u_{i}-l_{i}||_{1+1/\delta}||\phi_{j}/g||_{1+\delta}<\varepsilon.
\end{align*}
So they are $\varepsilon$-brackets for $(\mathcal{F}_{j},||\cdot||_{1})$, which proves%
\[
N_{[~]}(\varepsilon,\mathcal{F}_{j},||\cdot||_{1})\leq N_{[~]}(\varepsilon/||\phi_{j}/g||_{1+\delta},\mathcal{F},||\cdot||_{1+1/\delta})<\infty,\forall\varepsilon>0.
\]

Next we assume (\ref{unbounded_applicable1}) holds and prove $\mathcal{F}_{j}$ has an integrable envelope and $N_{[~]}(\varepsilon,\mathcal{F}_{j},||\cdot||_{1})<\infty$ for any $\varepsilon>0$. Note that
\[
\left\vert \frac{f(x)}{g(x)}\phi_{j}(x)\right\vert \leq||F||_{\infty}^{\gamma}\frac{F^{1-\gamma}(x)}{g(x)}|\phi_{j}(x)|\leq||F||_{\infty}^{\gamma}M|\phi_{j}(x)|,\forall f\in\mathcal{F}.
\]
So $\mathcal{F}_{j}$ has an integrable envelope $||F||_{\infty}^{\gamma}M|\phi_{j}(x)|$. To show $N_{[~]}(\varepsilon,\mathcal{F}_{j},||\cdot||_{1})<\infty$ for any $\varepsilon>0$, we take $\varepsilon\gamma/(M||\phi_{j}||_{1+\delta})$-brackets $[l_{i},u_{i}],i=1,\ldots,N=N_{[~]}(\varepsilon\gamma/(M||\phi_{j}||_{1+\delta}),\mathcal{F}^{\gamma},||\cdot||_{1+1/\delta})$ that covers $(\mathcal{F}^{\gamma},||\cdot||_{1+1/\delta})$. Without loss of generality, we can assume $0\leq l_{i}\leq u_{i}\leq F^{\gamma}$. For each $f\in\mathcal{F}$, $\exists~[l_{i},u_{i}]$ s.t. $l_{i}\leq f^{\gamma}\leq u_{i}\Leftrightarrow l_{i}^{1/\gamma}\leq f\leq u_{i}^{1/\gamma}$. Then we obtain%
\[
\frac{l_{i}^{1/\gamma}}{g}\phi_{j}^{+}-\frac{u_{i}^{1/\gamma}}{g}\phi_{j}^{-}\leq\frac{f}{g}\phi_{j}\leq\frac{u_{i}^{1/\gamma}}{g}\phi_{j}^{+}-\frac{l_{i}^{1/\gamma}}{g}\phi_{j}^{-}.
\]
Therefore, $[l_{i}^{1/\gamma}\phi_{j}^{+}/g-u_{i}^{1/\gamma}\phi_{j}^{-}/g,u_{i}^{1/\gamma}\phi_{j}^{+}/g-l_{i}^{1/\gamma}\phi_{j}^{-}/g],i=1,\ldots,N$ forms a cover of $(\mathcal{F}_{j},||\cdot||_{1})$. Moreover, these brackets satisfy%
\[
\left\vert \frac{u_{i}^{1/\gamma}}{g}\phi_{j}^{+}-\frac{l_{i}^{1/\gamma}}{g}\phi_{j}^{-}\right\vert \leq\frac{F}{g}\phi_{j}^{+}+\frac{F}{g}\phi_{j}^{-}\leq||F||_{\infty}^{\gamma}M|\phi_{j}|\Rightarrow\left\Vert \frac{u_{i}^{1/\gamma}}{g}\phi_{j}^{+}-\frac{l_{i}^{1/\gamma}}{g}\phi_{j}^{-}\right\Vert _{1}<\infty,
\]%
\[
\left\vert \frac{l_{i}^{1/\gamma}}{g}\phi_{j}^{+}-\frac{u_{i}^{1/\gamma}}{g}\phi_{j}^{-}\right\vert \leq\frac{F}{g}\phi_{j}^{+}+\frac{F}{g}\phi_{j}^{-}\leq||F||_{\infty}^{\gamma}M|\phi_{j}|\Rightarrow\left\Vert \frac{l_{i}^{1/\gamma}}{g}\phi_{j}^{+}-\frac{u_{i}^{1/\gamma}}{g}\phi_{j}^{-}\right\Vert _{1}<\infty,
\]%
\begin{align*}
&  \left\vert \frac{u_{i}^{1/\gamma}}{g}\phi_{j}^{+}-\frac{l_{i}^{1/\gamma}}{g}\phi_{j}^{-}-\frac{l_{i}^{1/\gamma}}{g}\phi_{j}^{+}+\frac{u_{i}^{1/\gamma}}{g}\phi_{j}^{-}\right\vert =\frac{(u_{i}^{1/\gamma}-l_{i}^{1/\gamma})|\phi_{j}|}{g}\leq\frac{(u_{i}-l_{i})F^{1-\gamma}|\phi_{j}|}{\gamma g}\leq\frac{M(u_{i}-l_{i})|\phi_{j}|}{\gamma}\\
&  \Rightarrow\left\Vert \frac{u_{i}^{1/\gamma}}{g}\phi_{j}^{+}-\frac{l_{i}^{1/\gamma}}{g}\phi_{j}^{-}-\frac{l_{i}^{1/\gamma}}{g}\phi_{j}^{+}+\frac{u_{i}^{1/\gamma}}{g}\phi_{j}^{-}\right\Vert _{1}\leq\frac{M}{\gamma}||u_{i}-l_{i}||_{1+1/\delta}||\phi_{j}||_{1+\delta}<\varepsilon,
\end{align*}
where the third one uses the mean value theorem $u_{i}^{1/\gamma}-l_{i}^{1/\gamma}=(\xi^{1/\gamma-1}/\gamma)(u_{i}-l_{i})\leq(F^{1-\gamma}/\gamma)(u_{i}-l_{i})$ for some $\xi$ with $l_{i}\leq\xi\leq u_{i}\leq F^{\gamma}$. So they are $\varepsilon$-brackets for $(\mathcal{F}_{j},||\cdot||_{1})$, which proves%
\[
N_{[~]}(\varepsilon,\mathcal{F}_{j},||\cdot||_{1})\leq N_{[~]}(\varepsilon\gamma/(M||\phi_{j}||_{1+\delta}),\mathcal{F}^{\gamma},||\cdot||_{1+1/\delta})<\infty,\forall\varepsilon>0.
\]

Now we prove Assumption \ref{maximal_inequality}. By Corollary 19.35 in \cite{van2000asymptotic}, it suffices to show for each $\mathcal{F}_{j}$, it has an envelope $F_{j}$ with $||F_{j}||_{2}<\infty$ and
\begin{equation}
\int_{0}^{||F_{j}||_{2}}\sqrt{\log N_{[~]}(\varepsilon,\mathcal{F}_{j},||\cdot||_{2})}d\varepsilon<\infty. \label{bracketing_entropy_condition}%
\end{equation}
When (\ref{compact_applicable2}) holds, by slightly modifying the first proof of Assumption \ref{GC_class}, we can see $\mathcal{F}_{j}$ has an envelope with finite $L_{2}$-norm and
\[
N_{[~]}(\varepsilon,\mathcal{F}_{j},||\cdot||_{2})\leq N_{[~]}(\varepsilon/||\phi_{j}/g||_{2+\delta},\mathcal{F},||\cdot||_{2(2+\delta)/\delta})<\infty,\forall\varepsilon>0,
\]
which proves (\ref{bracketing_entropy_condition}). When (\ref{unbounded_applicable2}) holds, by slightly modifying the second proof of Assumption \ref{GC_class}, we can still see $\mathcal{F}_{j}$ has an envelope with finite $L_{2}$-norm and%
\[
N_{[~]}(\varepsilon,\mathcal{F}_{j},||\cdot||_{2})\leq N_{[~]}(\varepsilon\gamma/(M||\phi_{j}||_{2+\delta}),\mathcal{F}^{\gamma},||\cdot||_{2(2+\delta)/\delta})<\infty,\forall\varepsilon>0,
\]
which again proves (\ref{bracketing_entropy_condition}).
\end{proof}

\begin{proof}[Proof of Corollary \ref{1D_compact}.]
We first prove the statistical guarantee. Assumptions \ref{integrability_condition}, \ref{convex_feasible_set} and \ref{finite_opt} clearly hold. Notice that the functions in all of the three classes have variation bounded by $2M$. By the bracketing number bound for functions of bounded variation (see \cite{van2000asymptotic} Example 19.11 where is a small typo: $L_{2}(P)$ should be $L_{r}(P)$) and assumptions on $\phi_{j}$ and $g$, we can see conditions (\ref{compact_applicable1}) and (\ref{compact_applicable2}) hold, which implies Assumptions \ref{GC_class} and \ref{maximal_inequality} by Lemma \ref{general_verification}. Therefore, the desired statistical guarantee is ensured by Theorems \ref{consistency} and \ref{convergence_rate}.
%By the bound of the bracketing number for the bounded monotone functions (see \cite{van1996weak} Theorem 2.7.5) 

Now we show ($\mathcal{P}_{n}$) is equivalent to the linear program. It suffices to show a finite-dimensional reduction in each case, i.e., the values $f(X_{i})=L(X_{i})g(X_{i})$ satisfying the discrete shape constraint can be extended to a function $f\in\mathcal{F}$. For monotonicity, this is achieved by the following step function
\[
f(x)=\left\{
\begin{array}[c]{l}%
L(X_{(1)})g(X_{(1)})\\
L(X_{(i+1)})g(X_{(i+1)})\\
0
\end{array}
\left.
\begin{array}[c]{l}%
\text{if }a\leq x\leq X_{(1)}\\
\text{if }X_{(i)}<x\leq X_{(i+1)},i=1,\ldots,n-1\\
\text{if }X_{(n)}<x\leq b.
\end{array}
\right.  \right.  
\]
For convexity, this is achieved by linear interpolation. For unimodality, this is achieved by the following step function
\[
f(x)=\left\{
\begin{array}[c]{l}%
L(X_{(i)})g(X_{(i)})\\
L(X_{(i_{0})})g(X_{(i_{0})})\\
\max\{L(X_{(i_{0})})g(X_{(i_{0})}),L(X_{(i_{0}+1)})g(X_{(i_{0}+1)})\}\\
L(X_{(i_{0}+1)})g(X_{(i_{0}+1)})\\
L(X_{(i+1)})g(X_{(i+1)})
\end{array}
\left.
\begin{array}[c]{l}%
\text{if }0\leq i<i_{0}\text{ and }X_{(i)}\leq x<X_{(i+1)}\\
\text{if }X_{(i_{0})}\leq x<c\\
\text{if }x=c\\
\text{if }c<x\leq X_{(i_{0}+1)}\\
\text{if }i_{0}<i\leq n\text{ and }X_{(i)}<x\leq X_{(i+1),}%
\end{array}
\right.  \right.  
\]
where $X_{(0)}:=a$, $X_{(n+1)}:=b$, $L(X_{(0)}):=0$, $L(X_{(n+1)}):=0$ and $i_{0}\in\{0,\ldots,n\}$ is the unique index s.t. $X_{(i_{0})}\leq c<X_{(i_{0}+1)}$ (note that with probability 1, the order statistics satisfy the strict inequality $X_{(0)}<X_{(1)}<\cdots<X_{(n)}<X_{(n+1)}$). This concludes our proof.
\end{proof}

\begin{proof}[Proof of Corollary \ref{OU_compact}.]
We first prove the statistical guarantee. Assumptions \ref{integrability_condition}, \ref{convex_feasible_set} and \ref{finite_opt} are obvious. By the finiteness of the bracketing number for the orthounimodal functions (see \cite{gao2007entropy} Corollary 1.3) and assumptions on $\phi_{j}$ and $g$, we can see condition (\ref{compact_applicable1}) holds, which implies Assumptions \ref{GC_class} by Lemma \ref{general_verification}. Therefore, consistency is ensured by Theorems \ref{consistency}.

Next we show ($\mathcal{P}_{n}$) is equivalent to the linear program. It suffices to show the discrete values $f(X_{i})=L(X_{i})g(X_{i})$ satisfying
\begin{align*}
& L(X_{i})g(X_{i})\leq L(X_{j})g(X_{j}),\text{ if }X_{i}\geq X_{j}\text{ component-wise}\\
& 0\leq L(X_{i})g(X_{i})\leq M,i=1,\ldots,n
\end{align*}
can be extended to a function $f\in\mathcal{F}$. We define a function $f$ on $\mathcal{X}$\ by%
\[
f(x)=\left\{
\begin{array}[c]{l}%
M\\
\min_{\{i:X_{i}\in\lbrack a,x]\}}L(X_{i})g(X_{i})
\end{array}
\left.
\begin{array}[c]{l}%
\text{if }[a,x]\text{ does not contain any }X_{i}\\
\text{otherwise.}%
\end{array}
\right.  \right.  
\]
We can see $f\in\mathcal{F}$ and it has the value $L(X_{i})g(X_{i})$ at $x=X_{i}$. This concludes our proof.
\end{proof}

\begin{proof}[Proof of Corollary \ref{monotone_unbounded}.]
We take $F(x)=(Mg_{0}(x))^{1/(1-\gamma)}$ as the envelope of $\mathcal{F}$. Notice that $\mathcal{F}^{\gamma}$ are a subset of non-increasing functions taking values in $[0,g_{0}^{\gamma}(a)]$. By the bound of the bracketing number for bounded monotone functions (see \cite{van1996weak} Theorem 2.7.5) as well as the assumptions in this corollary, we can see (\ref{unbounded_applicable1}) and (\ref{unbounded_applicable2}) hold. Therefore, Assumptions \ref{GC_class} and \ref{maximal_inequality} hold. Assumptions \ref{integrability_condition}, \ref{convex_feasible_set} and \ref{finite_opt} are trivial to check. So the desired statistical guarantee is ensured by Theorems \ref{consistency} and \ref{convergence_rate}. Additionally, we notice that the function $f$ defined in the proof of Corollary \ref{1D_compact} (with $b=\infty$) extends the discrete values to a function in $\mathcal{F}$ due to the non-increasing property of $g_{0}$. This proves the equivalence of ($\mathcal{P}_{n}$) and the linear program.
\end{proof}

\begin{proof}[Proof of Corollary \ref{unimodal_unbounded}.]
With the bound of the bracketing number for the functions of bounded variation (see \cite{van2000asymptotic} Example 19.11 where is a small typo: $L_{2}(P)$ should be $L_{r}(P)$), the proof of the statistical guarantee in Corollary \ref{monotone_unbounded} still applies. Additionally, we notice that the function $f$ defined in the proof of Corollary \ref{1D_compact} (with $a=-\infty,b=\infty$) extends the discrete values to a function in $\mathcal{F}$ due to the unimodality of $g_{0}$. This proves the equivalence of ($\mathcal{P}_{n}$) and the linear program.
\end{proof}

\begin{proof}[Proof of Corollary \ref{OU_unbounded}.]
We first show $N_{[~]}(\varepsilon,\mathcal{F}^{\gamma},||\cdot||_{1+1/\delta})<\infty,\forall\varepsilon>0$. Fix $\varepsilon>0$. We take $b\geq a,b\in\mathbb{R}^{d}$ such that $\int_{[a,\infty)\backslash\lbrack a,b]}g_{0}(x)dx\leq\varepsilon$. We write $\mathcal{F}^{\gamma}|_{[a,b]}$ as the class of functions in $\mathcal{F}^{\gamma}$ that are restricted to the domain $[a,b]$. The functions in $\mathcal{F}^{\gamma}|_{[a,b]}$ are still bounded orthounimodal functions. By Theorem 1.1 in \cite{gao2007entropy}, there are brackets $[l_{i},u_{i}],i=1,\ldots,N(\varepsilon)<\infty$ on $[a,b]$ such that they are $\varepsilon$-brackets covering $\mathcal{F}^{\gamma}|_{[a,b]}$ under the Lebesgue $L_{1+1/\delta}$-norm. For each $l_{i}$ and $u_{i}$, we extend them to $[a,\infty)$ by defining%
\[
\tilde{l}_{i}(x)=\left\{
\begin{array}[c]{l}%
l_{i}(x)\\
0
\end{array}
\left.
\begin{array}[c]{l}%
\text{if }x\in\lbrack a,b]\\
\text{otherwise}%
\end{array}
\right.  \right.  ,\quad\tilde{u}_{i}(x)=\left\{
\begin{array}[c]{l}%
u_{i}(x)\\
g_{0}(a)
\end{array}
\left.
\begin{array}[c]{l}%
\text{if }x\in\lbrack a,b]\\
\text{otherwise.}%
\end{array}
\right.  \right.
\]
We can see $[\tilde{l}_{i},\tilde{u}_{i}],i=1,\ldots,N(\varepsilon)$ cover $\mathcal{F}^{\gamma}$ and the size of the brackets satisfy%
\begin{align*}
&  \left(  \int_{\lbrack a,\infty)}|\tilde{u}_{i}-\tilde{l}_{i}|^{1+1/\delta}g_{0}dx\right)  ^{1/(1+1/\delta)}\\
&  =\left(  \int_{[a,b]}|u_{i}-l_{i}|^{1+1/\delta}g_{0}dx+g_{0}^{1+1/\delta}(a)\int_{[a,\infty)\backslash\lbrack a,b]}g_{0}dx\right)  ^{1/(1+1/\delta)}\\
&  \leq\left(  g_{0}(a)\int_{[a,b]}|u_{i}-l_{i}|^{1+1/\delta}dx+g_{0}^{1+1/\delta}(a)\varepsilon\right)  ^{1/(1+1/\delta)}\\
&  \leq\left(  g_{0}(a)\varepsilon^{1+1/\delta}+g_{0}^{1+1/\delta}(a)\varepsilon\right)  ^{1/(1+1/\delta)}\rightarrow0
\end{align*}
as $\varepsilon\rightarrow0$. This proves $N_{[~]}(\varepsilon,\mathcal{F}^{\gamma},||\cdot||_{1+1/\delta})<\infty,\forall\varepsilon>0$. Therefore, condition (\ref{unbounded_applicable1}) holds, which implies Assumption \ref{GC_class}. Then consistency follows from Theorem \ref{consistency} (the other assumptions are easy to check).

Next we show ($\mathcal{P}_{n}$) is equivalent to the linear program. It suffices to show the discrete values $f(X_{i})=L(X_{i})g(X_{i})$ satisfying
\begin{align*}
& L(X_{i})g(X_{i})\leq L(X_{j})g(X_{j}),\text{ if }X_{i}\geq X_{j}\text{ component-wise}\\
& 0\leq L(X_{i})g(X_{i})\leq(Mg_{0}(X_{i}))^{1/(1-\gamma)},i=1,\ldots,n
\end{align*}
can be extended to a function $f\in\mathcal{F}$. We define a function $f$ on $\mathcal{X}$\ by%
\[
f(x)=\left\{
\begin{array}[c]{l}%
(Mg_{0}(x))^{1/(1-\gamma)}\\
\min\{(Mg_{0}(x))^{1/(1-\gamma)},\min_{\{i:X_{i}\in\lbrack a,x]\}}L(X_{i})g(X_{i})\}
\end{array}
\left.
\begin{array}[c]{l}
\text{if }[a,x]\text{ does not contain any }X_{i}\\
\text{otherwise}%
\end{array}
\right.  \right.  .
\]
We can see $f\in\mathcal{F}$ and it has the value $L(X_{i})g(X_{i})$ at $x=X_{i}$. This concludes our proof.
\end{proof}

\begin{proof}[Proof of Proposition \ref{failure_alpha}.]
We first consider $\alpha<d$. To show for any $p\geq1$, $N_{[~]}(\varepsilon,\mathcal{F},||\cdot||_{p})=\infty$ for all sufficiently small $\varepsilon>0$, it suffices to show $\mathcal{F}$ is not $P_{g}$-GC by \cite{van1996weak} Theorem 2.4.1. Fix a realization of $n$ i.i.d. samples $X_{1},\ldots,X_{n}$ from $g$. We define a function $f$ on $\mathcal{X}$ as%
\begin{equation}
f(x)=\left\{
\begin{array}[c]{l}%
1/|x-a|^{d-\alpha}\\
0
\end{array}
\left.
\begin{array}[c]{l}%
\text{if }x\neq a\text{ and }a,x,X_{i}\text{ are collinear for some }X_{i}\\
\text{otherwise,}%
\end{array}
\right.  \right.   \label{counterexample1}%
\end{equation}
where $|\cdot|$ is the usual Euclidean norm. We can see $f\in\mathcal{F}$ but $f=0$ almost everywhere so that $P_{g}f=0$. Therefore,%
\[
||P_{n}-P_{g}||_{\mathcal{F}}\geq\left\vert \frac{1}{n}\sum_{i=1}^{n}f(X_{i})\right\vert =\frac{1}{n}\sum_{i=1}^{n}\frac{1}{|X_{i}-a|^{d-\alpha}}\overset{\text{a.s.}}{\rightarrow}\mathbb{E}_{g}\left[  \frac{1}{|X-a|^{d-\alpha}}\right]  >0,
\]
which proves $\mathcal{F}$ is not $P_{g}$-GC. Next, we show $\mathcal{F}_{j}$ does not satisfy Assumptions \ref{GC_class} or \ref{maximal_inequality} if $\phi_{j}$ is not almost everywhere zero. Without loss of generality, assume $\{x\in\mathcal{X}:\phi_{j}(x)>0\}$ has a positive Lebesgue measure. It suffices to disprove $||P_{n}-P_{g}||_{\mathcal{F}_{j}}\overset{\text{P*}}{\rightarrow}0$ since both assumptions imply this conclusion. We modify the above function $f$ as%
\begin{equation}
f(x)=\left\{
\begin{array}[c]{l}%
1/|x-a|^{d-\alpha}\\
0
\end{array}
\left.
\begin{array}[c]{l}%
\text{if }x\neq a\text{ and }a,x,X_{i}\text{ are collinear for some }X_{i}\text{ with }\phi_{j}(X_{i})>0\\
\text{otherwise.}%
\end{array}
\right.  \right.   \label{counterexample2}%
\end{equation}
Then we still have $f\in\mathcal{F}$ and $f=0$ almost everywhere. Therefore,%
\begin{align*}
||P_{n}-P_{g}||_{\mathcal{F}_{j}}\geq\left\vert \frac{1}{n}\sum_{i=1}^{n}\phi_{j}(X_{i})\frac{f(X_{i})}{g(X_{i})}\right\vert &=\frac{1}{n}\sum_{i=1}^{n}\frac{\phi_{j}(X_{i})I(\phi_{j}(X_{i})>0)}{|X_{i}-a|^{d-\alpha}g(X_{i})}\\
&\overset{\text{a.s.}}{\rightarrow}\int_{\mathcal{X}}\frac{\phi_{j}(x)I(\phi_{j}(x)>0)}{|x-a|^{d-\alpha}}dx>0,
\end{align*}
which disproves $||P_{n}-P_{g}||_{\mathcal{F}_{j}}\overset{\text{P*}}{\rightarrow}0$.

When $\alpha\geq d$, we simply change the nonzero function value in (\ref{counterexample1}) and (\ref{counterexample2}) into $M$ and the same proof can be applied.
\end{proof}

\begin{proof}[Proof of Proposition \ref{2Dcounterexample}.]
We first prove that $val(\mathcal{P}_{\alpha})\leq(2^{\alpha}-1)/2^{\alpha}$.
Consider any feasible function $f$ in ($\mathcal{P}_{\alpha}$). By changing
into polar coordinates, the objective function can be written as%
\[
\mathbb{E}_{f}[I(X\in S)]=\int_{0}^{\pi/2}\int_{1}^{2}f(r\cos\theta
,r\sin\theta)rdrd\theta.
\]
The definition of $\alpha$-unimodality implies $r^{2-\alpha}f(r\cos
\theta,r\sin\theta)$ is non-increasing in $r\in(0,\infty)$. Therefore, the
objective function can be bounded by%
\begin{align*}
\mathbb{E}_{f}[I(X\in S)]  &  =\int_{0}^{\pi/2}\int_{1}^{2}f(r\cos\theta
,r\sin\theta)r^{2-\alpha}\frac{1}{r^{1-\alpha}}drd\theta\\
&  \leq\int_{0}^{\pi/2}\int_{1}^{2}f(\cos\theta,\sin\theta)\frac
{1}{r^{1-\alpha}}drd\theta\\
&  =\int_{0}^{\pi/2}f(\cos\theta,\sin\theta)\int_{1}^{2}\frac{1}{r^{1-\alpha}%
}drd\theta\\
&  =\int_{0}^{\pi/2}f(\cos\theta,\sin\theta)\frac{2^{\alpha}-1}{\alpha}%
d\theta.
\end{align*}
On the other hand, by a similar argument, we have%
\begin{align*}
\mathbb{E}_{f}[I(X\geq0,||X||\leq1)]  &  =\int_{0}^{\pi/2}\int_{0}^{1}%
f(r\cos\theta,r\sin\theta)r^{2-\alpha}\frac{1}{r^{1-\alpha}}drd\theta\\
&  \geq\int_{0}^{\pi/2}\int_{0}^{1}f(\cos\theta,\sin\theta)\frac
{1}{r^{1-\alpha}}drd\theta\\
&  =\int_{0}^{\pi/2}f(\cos\theta,\sin\theta)\frac{1}{\alpha}d\theta\\
&  \geq\frac{1}{2^{\alpha}-1}\mathbb{E}_{f}[I(X\in S)].
\end{align*}
Since $1=\mathbb{E}_{f}[I(X\in\mathcal{X})]=\mathbb{E}_{f}[I(X\geq0,||X||\leq
1)]+\mathbb{E}_{f}[I(X\in S)]$, we obtain $\mathbb{E}_{f}[I(X\in S)]\leq(2^{\alpha}-1)/2^{\alpha}$. Since the above arguments hold for any feasible $f$, we know that
$val(\mathcal{P}_{\alpha})\leq(2^{\alpha}-1)/2^{\alpha}$.

Next, we will prove that $\mathbb{P}_{\ast}(\exists N_{0}\in\mathbb{N}\text{ s.t. }val(\mathcal{P}_{\alpha
.n})=1,\forall n\geq N_{0})=1$.

We first consider the case $\alpha<2$. Since the sampling distribution is
continuously distributed, any three of $(0,0),X_{1},\ldots X_{n}$ are not
collinear a.s. For any $C>0$ and any realization of $X_{1},\ldots,X_{n}$, we
define the following function
\[
f(x;C,X_{1},\ldots,X_{n})=\left\{
\begin{array}
[c]{l}%
C/||x||^{2-\alpha}\\
0
\end{array}
\left.
\begin{array}
[c]{l}%
\text{if }x,(0,0)\text{ and }X_{i}\text{ are collinear for some }X_{i}\in S\\
\text{otherwise.}%
\end{array}
\right.  \right.  
\]
All of them are in $\mathcal{F}$ because%
\[
r^{2-\alpha}f(rx;C,X_{1},\ldots,X_{n})=\left\{
\begin{array}
[c]{l}%
C/||x||^{2-\alpha}\\
0
\end{array}
\left.
\begin{array}
[c]{l}%
\text{if }x,(0,0)\text{ and }X_{i}\text{ are collinear for some }X_{i}\in S\\
\text{otherwise}%
\end{array}
\right.  \right.
\]
is non-increasing in $r$. Therefore, the functions $L(x;C,X_{1},\ldots
,X_{n})=f(x;C,X_{1},\ldots,X_{n})/g(x)$ are in $\mathcal{L}$. Then, we will
prove that the constraint $(1/n)\sum_{i=1}^{n}I(X_{i}\in\mathcal{X})L(X_{i};C,X_{1},\ldots,X_{n})=1$ is satisfied for some $C$. We notice that
\begin{align*}
&  \frac{1}{n}\sum_{i=1}^{n}I(X_{i}\in\mathcal{X})L(X_{i};C,X_{1},\ldots
,X_{n})\\
&  =\frac{1}{n}\sum_{i=1}^{n}I(X_{i}\in S)\frac{C}{||X_{i}||^{2-\alpha}%
g(X_{i})}\\
&  \overset{a.s.}{\rightarrow}\int_{S}\frac{C}{||x||^{2-\alpha}g(x)}g(x)dx\\
&  =\int_{0}^{\pi/2}\int_{1}^{2}\frac{C}{r^{2-\alpha}}rdrd\theta\\
&  =\frac{\pi(2^{\alpha}-1)}{2\alpha}C
  =\left\{
\begin{array}
[c]{l}%
2\\
1/2
\end{array}
\left.
\begin{array}
[c]{l}%
\text{if }C=4\alpha/(\pi(2^{\alpha}-1))\\
\text{if }C=\alpha/(\pi(2^{\alpha}-1)).
\end{array}
\right.  \right.  
\end{align*}
Since
\[
\frac{1}{n}\sum_{i=1}^{n}I(X_{i}\in\mathcal{X})L(X_{i};C,X_{1},\ldots
,X_{n})=\frac{1}{n}\sum_{i=1}^{n}I(X_{i}\in S)\frac{C}{||X_{i}||^{2-\alpha
}g(X_{i})}%
\]
is continuous in $C$, when the above limit holds, we know that $\exists
N_{0}\in\mathbb{N}$ such that for all $n\geq N_{0}$ there must be some
$C_{0}\in(\alpha/(\pi(2^{\alpha}-1)),4\alpha/(\pi(2^{\alpha}-1)))$ (depending
on $n$ and sampling realizations) such that $(1/n)\sum_{i=1}^{n}I(X_{i}\in\mathcal{X})L(X_{i};C_{0},X_{1}%
,\ldots,X_{n})=1$. Moreover, the objective function is also 1 because $I(X_{i}\in S)L(X_{i};C_{0},X_{1},\ldots,X_{n})=I(X_{i}\in\mathcal{X}%
)L(X_{i};C_{0},X_{1},\ldots,X_{n})$. Therefore, we prove that $\mathbb{P}_{\ast}(\exists N_{0}\in\mathbb{N}\text{ s.t. }val(\mathcal{P}_{\alpha
.n})=1,\forall n\geq N_{0})=1.$

Then we consider the case $\alpha\geq2$. For any $C\in(0,M]$ and any
realization of $X_{1},\ldots,X_{n}$, we define the following function (still
denoted by $f(x;C,X_{1},\ldots,X_{n})$)
\[
f(x;C,X_{1},\ldots,X_{n})=\left\{
\begin{array}
[c]{l}%
C\\
0
\end{array}
\left.
\begin{array}
[c]{l}%
\text{if }x,(0,0)\text{ and }X_{i}\text{ are collinear for some }X_{i}\in S\\
\text{otherwise.}%
\end{array}
\right.  \right.  
\]
All of them are in $\mathcal{F}$ because%
\[
r^{2-\alpha}f(rx;C,X_{1},\ldots,X_{n})=\left\{
\begin{array}
[c]{l}%
r^{2-\alpha}C\\
0
\end{array}
\left.
\begin{array}
[c]{l}%
\text{if }x,(0,0)\text{ and }X_{i}\text{ are collinear for some }X_{i}\in S\\
\text{otherwise}%
\end{array}
\right.  \right.
\]
is non-increasing in $r$. Therefore, the functions $L(x;C,X_{1},\ldots
,X_{n})=f(x;C,X_{1},\ldots,X_{n})/g(x)$ are in $\mathcal{L}$. Like the
previous case, we will prove that the constraint $(1/n)\sum_{i=1}^{n}I(X_{i}\in\mathcal{X})L(X_{i};C,X_{1},\ldots,X_{n})=1$ is satisfied for some $C$. We notice that
\begin{align*}
&  \frac{1}{n}\sum_{i=1}^{n}I(X_{i}\in\mathcal{X})L(X_{i};C,X_{1},\ldots
,X_{n})\\
&  =\frac{1}{n}\sum_{i=1}^{n}I(X_{i}\in S)\frac{C}{g(X_{i})}\\
&  \overset{a.s.}{\rightarrow}\int_{S}\frac{C}{g(x)}g(x)dx\\
&  =\frac{3\pi C}{4}
  =\left\{
\begin{array}
[c]{l}%
3\pi M/4>1\\
1/2
\end{array}
\left.
\begin{array}
[c]{l}%
\text{if }C=M\\
\text{if }C=2/(3\pi).
\end{array}
\right.  \right.  
\end{align*}
Therefore, when the above limit holds, we know that $\exists N_{0}%
\in\mathbb{N}$ such that for all $n\geq N_{0}$ there must be some $C_{0}%
\in(2/(3\pi),M)$ (depending on $n$ and sampling realizations) such that $(1/n)\sum_{i=1}^{n}I(X_{i}\in\mathcal{X})L(X_{i};C_{0},X_{1}%
,\ldots,X_{n})=1$. Moreover, the objective function is also 1 because $I(X_{i}\in S)L(X_{i};C_{0},X_{1},\ldots,X_{n})=I(X_{i}\in\mathcal{X}%
)L(X_{i};C_{0},X_{1},\ldots,X_{n})$. Therefore, we prove that $\mathbb{P}_{\ast}(\exists N_{0}\in\mathbb{N}\text{ s.t. }val(\mathcal{P}_{\alpha
.n})=1,\forall n\geq N_{0})=1.$

Now for any $\alpha$, we have proved $\mathbb{P}_{\ast}(\exists N_{0}\in\mathbb{N}\text{ s.t. }val(\mathcal{P}_{\alpha
.n})=1,\forall n\geq N_{0})=1$. Recall that we have also proved $val(\mathcal{P}_{\alpha})\leq(2^{\alpha}-1)/2^{\alpha}$, which means the SAA optimal value is not consistent.
\end{proof}%

\begin{proof}[Proof of Theorem \ref{complexity}.]
We have%
\begin{align*}
\mathbb{E}_{g}[N_{2}(n,d)]  & =\mathbb{E}_{g}\left[  \sum_{1\leq i\neq j\leq n}I(X_{i}\geq X_{j}\text{ and it's non-redundant})\right]  \\
& =n(n-1)\mathbb{P}_{g}(X_{1}\geq X_{2}\text{ and it's non-redundant}).
\end{align*}
Let $F_{i},i=1,\ldots d$ be the marginal distributions of $X_{1}$. We can see%
\begin{align*}
& \mathbb{P}_{g}(X_{1}\geq X_{2}\text{ and it is non-redundant})\\
& =\int_{-\infty}^{\infty}\int_{x_{1}}^{\infty}\cdots\int_{-\infty}^{\infty}\int_{x_{d}}^{\infty}\mathbb{P}_{g}(X_{i}\text{ violates }y=X_{1}\geq X_{i}\geq X_{2}=x,\forall i\geq3)dF_{d}(y_{d})dF_{d}(x_{d})\cdots dF_{1}(y_{1})dF_{1}(x_{1})\\
& =\int_{-\infty}^{\infty}\int_{x_{1}}^{\infty}\cdots\int_{-\infty}^{\infty}\int_{x_{d}}^{\infty}\left(  1-{\prod_{i=1}^{d}}(F_{i}(y_{i})-F_{i}(x_{i}))\right)  ^{n-2}dF_{d}(y_{d})dF_{d}(x_{d})\cdots dF_{1}(y_{1})dF_{1}(x_{1})\\
& =\int_{-\infty}^{\infty}\int_{x_{1}}^{\infty}\cdots\int_{-\infty}^{\infty}\int_{x_{d}}^{\infty}\sum_{k=0}^{n-2}\binom{n-2}{k}(-1)^{k}{\prod_{i=1}^{d}}(F_{i}(y_{i})-F_{i}(x_{i}))^{k}dF_{d}(y_{d})dF_{d}(x_{d})\cdots dF_{1}(y_{1})dF_{1}(x_{1})\\
& =\sum_{k=0}^{n-2}\binom{n-2}{k}(-1)^{k}\int_{-\infty}^{\infty}\int_{x_{1}}^{\infty}\cdots\int_{-\infty}^{\infty}\int_{x_{d}}^{\infty}{\prod_{i=1}^{d}}(F_{i}(y_{i})-F_{i}(x_{i}))^{k}dF_{d}(y_{d})dF_{d}(x_{d})\cdots dF_{1}(y_{1})dF_{1}(x_{1})\\
& =\sum_{k=0}^{n-2}\binom{n-2}{k}(-1)^{k}{\prod_{i=1}^{d}}\int_{-\infty}^{\infty}\int_{x_{i}}^{\infty}(F_{i}(y_{i})-F_{i}(x_{i}))^{k}dF_{i}(y_{i})dF_{i}(x_{i})\\
& =\sum_{k=0}^{n-2}\frac{\binom{n-2}{k}(-1)^{k}}{(k+1)^{d}(k+2)^{d}},
\end{align*}
which implies that
\[
\mathbb{E}_{g}[N_{2}(n,d)]=n(n-1)\sum_{k=0}^{n-2}\frac{\binom{n-2}{k}(-1)^{k}}{(k+1)^{d}(k+2)^{d}}.
\]
When $d=2$, we can further simplify $\mathbb{E}_{g}[N_{2}(n,d)]$. First note that%
\begin{align*}
\mathbb{E}_{g}[N_{2}(n,2)]  & =n(n-1)\sum_{k=0}^{n-2}\frac{\binom{n-2}{k}(-1)^{k}}{(k+1)^{2}(k+2)^{2}}\\
& =\sum_{k=0}^{n-2}\frac{\binom{n}{k+2}(-1)^{k}}{(k+1)(k+2)}\\
& =\sum_{k=2}^{n}\frac{\binom{n}{k}(-1)^{k}}{k(k-1)}\\
& =\sum_{k=2}^{n}\frac{\binom{n}{k}(-1)^{k}}{k-1}-\sum_{k=2}^{n}\frac{\binom{n}{k}(-1)^{k}}{k}.
\end{align*}
We define two functions%
\[
f_{1}(x)=\sum_{k=2}^{n}\frac{\binom{n}{k}(-1)^{k}}{k-1}x^{k-1},\quad f_{2}(x)=\sum_{k=2}^{n}\frac{\binom{n}{k}(-1)^{k}}{k}x^{k}%
\]
with derivatives%
\[
f_{1}^{\prime}(x)=\sum_{k=2}^{n}\binom{n}{k}(-1)^{k}x^{k-2}=\frac{(1-x)^{n}-1+nx}{x^{2}}%
\]
and%
\[
f_{2}^{\prime}(x)=\sum_{k=2}^{n}\binom{n}{k}(-1)^{k}x^{k-1}=\frac{(1-x)^{n}-1+nx}{x}.
\]
Therefore, we can see%
\begin{align*}
f_{1}(1)
& =f_{1}(0)+\int_{0}^{1}f_{1}^{\prime}(x)dx\\
& =\int_{0}^{1}\frac{(1-x)^{n}-1+nx}{x^{2}}dx\\
& =\int_{0}^{1}\frac{x^{n}-1+n(1-x)}{(1-x)^{2}}dx\\
& =\int_{0}^{1}\frac{(x-1)(x^{n-1}+\cdots+x+1-n)}{(1-x)^{2}}dx\\
& =\int_{0}^{1}\frac{(x-1)^{2}(x^{n-2}+2x^{n-3}+\cdots+(n-2)x+n-1)}{(1-x)^{2}%
}dx\\
& =\int_{0}^{1}(x^{n-2}+2x^{n-3}+\cdots+(n-2)x+n-1)dx\\
& =\sum_{i=1}^{n-1}\frac{n-i}{i}%
\end{align*}
and%
\begin{align*}
f_{2}(1)
& =f_{2}(0)+\int_{0}^{1}f_{2}^{\prime}(x)dx\\
& =\int_{0}^{1}\frac{(1-x)^{n}-1+nx}{x}dx\\
& =\int_{0}^{1}\frac{x^{n}-1+n(1-x)}{1-x}dx\\
& =\int_{0}^{1}(n-x^{n-1}-\cdots-x-1)dx\\
& =n-\sum_{i=1}^{n}\frac{1}{i}.
\end{align*}
Plugging $f_{1}(1)$ and $f_{2}(1)$ into $\mathbb{E}_{g}[N_{2}(n,2)]$, we obtain%
\[
\mathbb{E}_{g}[N_{2}(n,2)]=\sum_{i=1}^{n-1}\frac{n-i}{i}-\left(  n-\sum_{i=1}^{n}\frac{1}{i}\right)  =\sum_{i=1}^{n}\frac{n-i}{i}-\left(  n-\sum_{i=1}^{n}\frac{1}{i}\right)  =(n+1)\sum_{i=1}^{n}\frac{1}{i}-2n.
\]

Finally, we prove the inequality
\[
\frac{1}{2}\mathbb{E}_{g}[N_{2}(n,d-1)]\leq \mathbb{E}_{g}[N_{2}(n,d)]\leq\frac{n(n-1)}{2^{d}}.
\]
The second inequality directly follows from $\mathbb{E}_{g}[N_{2}(n,d)]\leq \mathbb{E}_{g}[N_{1}(n,d)]=n(n-1)/2^{d}$. For the first one, according to the previous calculation,%
\begin{align*}
& \mathbb{P}_{g}(X_{1}\geq X_{2}\text{ and it is non-redundant})\\
& =\int_{-\infty}^{\infty}\int_{x_{1}}^{\infty}\cdots\int_{-\infty}^{\infty}\int_{x_{d}}^{\infty}\left(  1-{\prod_{i=1}^{d}}(F_{i}(y_{i})-F_{i}(x_{i}))\right)  ^{n-2}dF_{d}(y_{d})dF_{d}(x_{d})\cdots dF_{1}(y_{1})dF_{1}(x_{1})\\
& \geq\int_{-\infty}^{\infty}\int_{x_{1}}^{\infty}\cdots\int_{-\infty}^{\infty}\int_{x_{d}}^{\infty}\left(  1-{\prod_{i=1}^{d-1}}(F_{i}(y_{i})-F_{i}(x_{i}))\right)  ^{n-2}dF_{d}(y_{d})dF_{d}(x_{d})\cdots dF_{1}(y_{1})dF_{1}(x_{1})\\
& =\int_{-\infty}^{\infty}\int_{x_{1}}^{\infty}\cdots\int_{-\infty}^{\infty}\int_{x_{d-1}}^{\infty}\left(  1-{\prod_{i=1}^{d-1}}(F_{i}(y_{i})-F_{i}(x_{i}))\right)  ^{n-2}dF_{d-1}(y_{d-1})dF_{d-1}(x_{d-1})\cdots dF_{1}(y_{1})dF_{1}(x_{1})\\
& \times\int_{-\infty}^{\infty}\int_{x_{d}}^{\infty}1dF_{d}(y_{d})dF_{d}(x_{d})\\
& =\frac{1}{2}\int_{-\infty}^{\infty}\int_{x_{1}}^{\infty}\cdots\int_{-\infty}^{\infty}\int_{x_{d-1}}^{\infty}\left(  1-{\prod_{i=1}^{d-1}} (F_{i}(y_{i})-F_{i}(x_{i}))\right)  ^{n-2}dF_{d-1}(y_{d-1})dF_{d-1}(x_{d-1})\cdots dF_{1}(y_{1})dF_{1}(x_{1}).
\end{align*}
which gives us $\mathbb{E}_{g}[N_{2}(n,d)]\geq \mathbb{E}_{g}[N_{2}(n,d-1)]/2$.
\end{proof}

\end{appendix}

\end{document}